\newtcolorbox{setting}[2][]{ %colback=white
colframe=c1!35!gray,fonttitle=\bfseries, title=#2,#1}
\numberwithin{equation}{section}
\theoremstyle{definition}
\newtheorem{prop}{\textcolor{dpurb}{Proposition}}
\newtheorem{lemma}[prop]{\textcolor{dpurb}{Lemma}}
\newtheorem{cor}[prop]{\textcolor{dpurb}{Corollary}}
\numberwithin{prop}{section}
\newtheorem{rmk}[prop]{\textcolor{dpurb}{Remark}}
\definecolor{c0}{rgb}{0.0,0.9,0.9}
\definecolor{c1}{rgb}{0.0,0.5,0.6}
\definecolor{c2}{rgb}{0.0,0.0,0.5}
\definecolor{c3}{rgb}{0.5,0.0,0.5}
\definecolor{grn}{rgb}{0,0.4,0}
\definecolor{dgrn}{rgb}{0.0,0.5,0.0}
\definecolor{dblu}{rgb}{0.0,0.1,0.5}
\definecolor{dpur}{rgb}{0.7,0.0,0.7}
\definecolor{dred}{rgb}{0.7,0.0,0.0}
\definecolor{dpurb}{rgb}{0.3,0.0,0.4}
\newenvironment{customthm}[1]
{\innercustomthm}
  {\endinnercustomthm}
\newenvironment{customcor}[1]
  {\innercustomcor}
  {\endinnercustomcor}
\newenvironment{customprop}[1]
{\innercustomprop}
  {\endinnercustomprop}
\newenvironment{customlemma}[1]
{\innercustomlemma}
  {\endinnercustomlemma}
\newcommand{\del}{\partial}
\newcommand{\brs}[1]{\left| #1 \right|}
\newcommand{\brk}[1]{\left[ #1 \right]}
\newcommand{\prs}[1]{\left( #1 \right)}
\newcommand{\sqg}[1]{\left\{ #1 \right\}}
\newcommand{\ip}[1]{\left\langle #1 \right\rangle}
\newcommand{\hsp}{\hspace{0.5cm}}
\newcommand{\N}{\nabla}
\newcommand{\lap}{\Delta}
\newcommand{\gG}{\Gamma}
\renewcommand{\gg}{\gamma}
\newcommand{\gs}{\sigma}
\newcommand{\gw}{\omega}
\renewcommand{\ge}{\epsilon}
\newcommand{\la}{\lambda}
\newcommand{\gY}{\Upsilon}
\newcommand{\w}{\wedge}
\newcommand{\ten}{\otimes}
\newcommand{\dVg}{\dV_{g}}
\newcommand{\dVf}{\dV_{g}^{\phi}}
\newcommand{\dVe}{\dV_{g}^{\eta}}
\DeclareMathOperator{\Rc}{Rc}
\DeclareMathOperator{\Rm}{Rm}
\DeclareMathOperator{\dV}{dV}
\DeclareMathOperator{\tr}{tr}
\DeclareMathOperator{\Vol}{Vol}
\DeclareMathOperator{\supp}{supp}
\DeclareMathOperator{\cP}{P}
\DeclareMathOperator{\cS}{S}
\DeclareMathOperator{\cR}{R}
\DeclareMathOperator{\cQ}{Q}
\DeclareMathOperator{\cW}{W}
\DeclareMathOperator{\cB}{B}
\DeclareMathOperator{\cV}{V}
\definecolor{c0}{rgb}{0.0,0.9,0.9}
\newtcolorbox{thmbox}[2][]{colback=white,colframe=c0!35!gray,fonttitle=\bfseries, title=#2,#1}
\begin{document}
\title{Symplectic curvature flow revisted}
\author{Casey Lynn Kelleher}
\address{Department of Mathematics
         Princeton University\\
         Princeton, New Jersey, 08540}
\email{\href{mailto:ckelleher@princeton.edu}{ckelleher@princeton.edu}}
\thanks{The author was supported by a National Science Foundation Mathematical Postdoctoral Research Fellowship}
\date{\today}
\begin{abstract}
We continue studying a parabolic flow of almost K\"{a}hler structures introduced by Streets and Tian which naturally extends K\"{a}hler--Ricci flow onto symplectic manifolds. In the system of primarily the symplectic form, almost complex structure, Chern torsion and Chern connection, we establish new formulas for the evolutions of canonical quantities, in particular those related to the Chern connection. Using this, we give an extended characterization of fixed points of the flow originally performed in \cite{ST}.
\end{abstract}
\maketitle
\section{Introduction}
In \cite{ST} Streets and Tian introduced a curvature flow to investigate the topology and geometry of symplectic manifolds. In this work, we continue their analysis with a focus on further understanding the flow and corresponding evolutions of canonical quantities by providing new formulations and perspectives.\\

We first recall the fundamentals of this particular flow. For a given symplectic manifold $\prs{M^{2n},\gw}$, one can choose a compatible almost complex structure to form an almost K\"{a}hler manifold $(M^{2n}, J, \gw)$. The \emph{symplectic curvature flow} is a coupled degenerate parabolic system given by
\begin{equation*}
\begin{cases}
\tfrac{\del \gw_t}{\del t}= - \cP_t \qquad &\gw_0 = \left. \gw_t \right|_{t=0} \\ 
\tfrac{\del J_t}{\del t} = - \brk{\cP^{2,0+0,2}_t - 2 J_t \Rc_t^{2,0+0,2} }g^{-1}_t  \qquad & J_0 = \left. J_t \right|_{t=0}, 
\end{cases}
\end{equation*}
where $\cP$ is the curvature form of the Hermitian connection on the anticanonical bundle induced by the Chern connection. By Chern--Weil theory $\cP$ is closed and a multiple of the representative of $c_1 \prs{M,J}$. The resultant metric evolution is
\begin{equation}\label{eq:gevolution}
\tfrac{\del g_t}{\del t} = - 2 \Rc_t + 2 \cB_t \qquad g_0 \equiv \left. g_t \right|_{t=0},
\end{equation}
where
\begin{equation}\label{eq:cBforms}
\cB \triangleq \tfrac{1}{4} \cB^1 - \tfrac{1}{2} \cB^2, \qquad \cB^1_{ij}\triangleq g^{kl} g_{mn} \prs{D_i J_k^m}\prs{D_j J_l^n },\qquad \cB^2_{ij} \triangleq g^{kl} g_{mn} \prs{D_k J_i^m} \prs{D_l J_j^n}.
\end{equation}
In \cite{ST}, the authors demonstrate short time existence and characterize the long time existence obstruction in terms of the behavior of the Riemannian tensor (cf. \cite{ST} Theorems 1.6, 1.10) and classify generalizations of fixed points in $\dim_{\mathbb{C}} M = 2$ (cf. Corollary \ref{cor:2.5ofST}).\\
 
 This work proceeds with an underlying theme of reexamining symplectic curvature flow primarily in terms of the Chern connection (Chern derivatives, curvature and torsion).  We provide useful perspectives on identities in K\"{a}hler geometry, particularly along the flow, and showcase naturality and tractability of symplectic curvature flow while setting crucial groundwork for future pursuits.
\subsection{Outline of paper and statement of main results}
In \S \ref{s:background}, after pinning down conventions, we establish curvature identities relating Levi--Civita and Chern quantities. These all are independent of the flow and are generally useful for computations in almost K\"{a}hler geometry. In \S \ref{s:variationformulas}, we establish general variational formulas for curvature and torsion quantities. In \S \ref{s:SCFevs}, we explore variations of canonical objects along the flow. A key consequence of our Chern connection framework is a drastic simplification of Proposition 6.1 of \cite{ST} (reducing the $J$ flow from 11 terms to 1):
\begin{customprop}{A}\label{prop:Jdot} Along symplectic curvature flow the almost complex structure evolves by
\begin{align}
\begin{split}\label{eq:Jev}
\prs{\tfrac{\del J}{\del t}}_i^k &= 4\gw^{re} \prs{\N_r \tau_{ei}^k}.
 \end{split}
\end{align}
\end{customprop}
Interestingly, the quantity by which $J$ flows appears in the variation of $\brs{\tau}^2$ (cf. Lemma \ref{lem:normtauvar}). This yields a clean expression for the evolution of $\brs{\tau}^2$ along symplectic curvature flow (cf. Proposition 7.8 of \cite{ST}).
\begin{customthm}{B}\label{thm:Cherntors}
Along symplectic curvature flow the norm of the Chern torsion tensor evolves by
\begin{align*}
\prs{ \tfrac{\del}{\del t} - \lap }\brs{ \tau }^2 
&=\brs{\cB}^2 - 2 \brs{\N \tau}^2 -  4g^{ap}g^{bq} g^{dr}\Omega_{rpq}^e\prs{ \tau_{edc} \tau_{abc} +  \tau_{cde} \tau_{acb} + \tau_{ced}\tau_{acb}}, 
\end{align*}
and in particular, when $\dim_{\mathbb{C}} M = 2$,
\begin{align*}
\prs{\tfrac{\del}{\del t} - \lap} \brs{\tau}^2&=  - 2 \brs{\N \tau}^2 -  4 g^{dr}g^{ap}g^{bq} \Omega_{rpq}^e\prs{ \tau_{edc} \tau_{abc} +  2 \tau_{ced}\tau_{acb}}.
\end{align*}
\end{customthm}
\begin{customcor}{B}\label{cor:Cherncurvcont}
Let $\prs{M^{4}, \gw_0, J_0}$ be an almost K\"{a}hler manifold. There is a unique solution to symplectic curvature flow on a maximal time interval $[0,\sigma)$. Furthermore, if $\sigma < \infty$ then
\begin{align*}
\limsup_{t \to \sigma} \brs{\Omega}_{C^0} = \infty.
\end{align*}
\end{customcor}
Since the Chern connection naturally ties together $g$ and $J$, one may consider this demonstration of `Chern control' (in contrast to control by the Riemannian curvature in Theorem 1.10 of \cite{ST}) to be more natural. An application of Theorem \ref{thm:Cherntors} lies within the evolution computation of Chern scalar curvature.
\begin{customthm}{C}\label{thm:Chernscalcurv}
Along symplectic curvature flow the Chern scalar curvature evolves by
\begin{align}
\prs{ \tfrac{\del}{\del t} - \lap} \varrho
&= 4 \brs{\Rc- \cB}^2 +2 \lap \brs{\tau}^2 + 4  g^{ip}g^{jq}\prs{ \N_j \N_i \cB_{pq}} +  16 g^{ap} g^{re} g^{bq}g^{rc} \prs{\N_a \N_r \tau_{ebc}} \tau_{pqr}.
\end{align}
\end{customthm}
The variations of $\tau$ and $J$ along the flow play a key role in \S \ref{s:rigidity} regarding our rigidity result, where we build on Corollary 9.5 of \cite{ST} concerning the classification of static structures of the flow (cf. Proposition \ref{thm:rigidity}) by utilizing Sekigawa's formula.
\subsection*{Acknowledgements}
The author thanks Gang Tian and Jeffrey Streets for motivating her to explore this topic and for their endless support. She thanks Yury Ustinovskiy for stimulating conversations.
\section{Background}\label{s:background}
Since an almost K\"{a}hler manifold $\prs{M,g,J}$ is almost Hermitian, we have that the presence of \emph{$g$-compatibility} and a \emph{symplectic form $\gw$}, given respectively by (with local coordinate representation):
\begin{align}
\begin{split}\label{eq:gcompwdefn}
g \prs{X,Y} = g \prs{JX,JY} \qquad & g_{\mathbf{ij}} = J_{\mathbf{i}}^a J_{\mathbf{j}}^bg_{ab},\\
\gw \prs{X,Y} \triangleq g\prs{JX,Y } \qquad &\gw_{\mathbf{ij}} = J_\mathbf{i}^a g_{\mathbf{j}a}.
\end{split}
\end{align}
We adhere to the conventions of Gauduchon and Kobayashi--Nomizu (\cite{KN}, \cite{Gauduchon} pp.259 above (1.1.3)). In coordinates, resultant identities are:
\begin{align}
\begin{split}\label{eq:identitylist}
g_{\mathbf{ij}} = J^s_{\mathbf{i}} \gw_{\mathbf{j}s}& \qquad g^{\mathbf{ij}} = J_s^{\mathbf{i}} \gw^{\mathbf{j}s} \qquad \gw^{\mathbf{ij}} = J^{\mathbf{i}}_s g^{\mathbf{j}s}, \qquad J_{\mathbf{i}}^{\mathbf{j}} = \gw^{\mathbf{j}s} g_{\mathbf{i}s} = \gw_{\mathbf{i}s} g^{\mathbf{j}s} \qquad \gw^{\mathbf{a}c} \gw_{c \mathbf{b}} = \delta^{\mathbf{a}}_{\mathbf{b}}.
\end{split}
\end{align}
Recall the following decomposition of elements of $\prs{T^*M}^{\ten 2}$ into two types:
\begin{align}
\begin{split}\label{eq:typesdecomp}
A_{\mathbf{ij}}^{2,0 + 0,2} &= \tfrac{1}{2} \prs{ A_{\mathbf{ij}} - J_{\mathbf{i}}^sJ_{\mathbf{j}}^u A_{su}}, \\
A_{\mathbf{ij}}^{1,1} \hsp &= A_{\mathbf{ij}} - A_{\mathbf{ij}}^{2,0 + 0,2}  = \tfrac{1}{2} \prs{ A_{\mathbf{ij}} + J_{\mathbf{i}}^sJ_{\mathbf{j}}^u A_{su}}.
\end{split}
\end{align}
We introduce the coordinate expression of the \emph{Nijenhuis tensor}. In accordance with (\cite{KN} pp.123-124), set
\begin{equation}\label{eq:Nijen}
N_{\mathbf{jk}}^{\mathbf{i}} \triangleq 2 \prs{ J_{\mathbf{j}}^p \prs{\del_p J_{\mathbf{k}}^{\mathbf{i}}} - J_{\mathbf{k}}^p \prs{\del_p J_{\mathbf{j}}^{\mathbf{i}}} - J_p^{\mathbf{i}} \prs{\del_{\mathbf{j}} J_{\mathbf{k}}^p} + J_p^{\mathbf{i}} \prs{\del_{\mathbf{k}} J_{\mathbf{j}}^p}}.
\end{equation}
Set $N_{\mathbf{ijk}} \triangleq N_{\mathbf{ij}}^l g_{l\mathbf{k}}$ and note $N$ is type $(3,0+0,3)$, and so $(2,0+0,2)$ in each pair of indices:
\begin{equation}\label{eq:Nsymmetries}
N_{\mathbf{ijk}}= -N_{\mathbf{i}bc} J_{\mathbf{j}}^b J_{\mathbf{k}}^{c} = -J_{\mathbf{i}}^a J_{\mathbf{j}}^b  N_{ab\mathbf{k}}.
\end{equation}
Since $\prs{M,g,J}$ is \emph{almost K\"{a}hler}, we have that $\gw$ is closed.
\begin{equation}\label{eq:AK}
0 = \prs{d \gw}_{\mathbf{ijk}} = \del_{\mathbf{i}} \gw_{\mathbf{jk}} + \del_{\mathbf{j}} \gw_{\mathbf{ki}} + \del_{\mathbf{k}} \gw_{\mathbf{ij}}.
\end{equation}
This small fact is the crucial underpinning of many facts. One consequence is a characterization of the Chern connection for almost K\"{a}hler manifolds as the unique connection $\N = \del + \gY$ such that
\begin{equation}\label{eq:Cherncharac}
\N \gw \equiv 0, \qquad \N J \equiv 0,\qquad \tau_{\N}^{1,1} \equiv 0,
\end{equation}
where $\tau_{\N}$ denotes the torsion tensor of $\N$, a section of $\Lambda^2 \otimes TM$, and $\tau^{1,1}_{\N}$ is the projection of the vector valued torsion two-form onto the space of $\prs{1,1}$-forms. The latter identity expressed in coordinates is
\begin{align*}
\tau_{\mathbf{ij}}^{\mathbf{k}} &\triangleq \gY_{\mathbf{\mathbf{ij}}}^{\mathbf{k}} - \gY_{\mathbf{ji}}^{\mathbf{k}} = - J_{\mathbf{i}}^aJ_{\mathbf{j}}^b \tau_{ab}^{\mathbf{k}}.
\end{align*}
Denote the negative contorsion tensor of the Chern connection (the gap between Levi-Civita and Chern connections) by
\begin{equation}\label{eq:Cherndef}
\Theta_{\mathbf{ij}}^{\mathbf{k}}\triangleq\prs{D - \N}_{\mathbf{ij}}^{\mathbf{k}} =\prs{\gG - \gY}_{\mathbf{ij}}^{\mathbf{k}} = 
-\tfrac{1}{2} \prs{D_{\textbf{i}} J_p^{\textbf{k}}} J_{\textbf{j}}^p.
\end{equation}
From \cite{KN} we may identify $N$ as a Levi-Civita derivative of $\gw$, which yields a characterization of $\Theta$:
\begin{customprop}{4.2 of \cite{KN}}[\textbf{AK}]\label{cor:N=DJ}\footnote{We mark results in the literature which are restricted to the almost K\"{a}hler setting with an `(\textbf{AK})'.}Suppose $\prs{M^{2n},g,J}$ is an almost K\"{a}hler manifold. Then
\begin{equation}\label{eq:N=DJ}
4 \ip{ \prs{D_X J} Y, Z} = \ip{ N \prs{Y, Z}, JX}, \text{ that is, } N_{\mathbf{ij}}^{\mathbf{k}} = 4\gw^{\mathbf{k}p} \prs{D_p \gw_{\mathbf{ij}}} \text{ and } D_{\mathbf{i}} \gw_{\mathbf{j} \mathbf{k}} =  \tfrac{1}{4}  \gw_{\mathbf{i}p} N_{\mathbf{jk}}^p.
\end{equation}
\end{customprop}
\begin{cor}\label{cor:Thetaggtau}From \eqref{eq:Cherndef} it follows that
\begin{equation}\label{cor:1.5}
8 \Theta_{\mathbf{ijk}} = N_{\mathbf{jki}}.
\end{equation}
\begin{proof}
We carefully compute
\begin{align*}
\Theta^{y}_{ni} &= - \tfrac{1}{2}  \prs{D_n   J_p^y} J_i^p  &\mbox{ \eqref{eq:Cherndef}}\\
\Theta_{nim} &= - \tfrac{1}{2}  \prs{D_n   J_p^y g_{ym}} J_i^p  &\mbox{(multiply by $g$)}\\
&= - \tfrac{1}{2}  \prs{D_n  \gw_{pm}} J_i^p  &\mbox{\eqref{eq:gcompwdefn}}\\
&= - \tfrac{1}{8} \gw_{ne} N_{pm}^eJ_i^p &\mbox{Proposition \ref{cor:N=DJ}}\\
&=- \tfrac{1}{8} \prs{J_v^e \gw_{ne} N_{um}^v J_p^u} J_i^p &\mbox{\eqref{eq:Nsymmetries}}\\
&=\tfrac{1}{8} N_{imn},
\end{align*}
relabeling yields the result.
\end{proof}
\end{cor}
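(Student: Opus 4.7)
The plan is to chain together the definition \eqref{eq:Cherndef} of $\Theta$ with Proposition \ref{cor:N=DJ}, converting $DJ$ into $D\gw$ and then into $N$, and finally using the $(3,0+0,3)$-type symmetries of $N$ to arrange the free indices in the target order $(\mathbf{jki})$. First I would lower the upper index of $\Theta_{\mathbf{ij}}^{\mathbf{k}} = -\tfrac{1}{2}(D_{\mathbf{i}}J_p^{\mathbf{k}}) J_{\mathbf{j}}^p$ with $g_{\mathbf{k}y}$: since $D$ kills $g$ and since \eqref{eq:gcompwdefn} yields $g_{\mathbf{k}y}J_p^y = \gw_{p\mathbf{k}}$, this produces
\[
\Theta_{\mathbf{ijk}} = -\tfrac{1}{2}\prs{D_{\mathbf{i}}\gw_{p\mathbf{k}}} J_{\mathbf{j}}^p.
\]

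Next I would apply Proposition \ref{cor:N=DJ} to rewrite $D_{\mathbf{i}}\gw_{p\mathbf{k}} = \tfrac{1}{4}\gw_{\mathbf{i}e} N_{p\mathbf{k}}^e$, and then use the identity $\gw_{\mathbf{i}e} g^{es} = J_{\mathbf{i}}^s$ from \eqref{eq:identitylist} to lower the raised index on $N$. These two substitutions yield
\[
\Theta_{\mathbf{ijk}} = -\tfrac{1}{8} J_{\mathbf{i}}^s J_{\mathbf{j}}^p N_{p\mathbf{k}s},
\]
so the problem reduces to showing $J_{\mathbf{i}}^s J_{\mathbf{j}}^p N_{p\mathbf{k}s} = -N_{\mathbf{jki}}$.

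To close the argument I would apply the $(2,0+0,2)$-symmetry of $N$ in the first and third indices, namely $N_{p\mathbf{k}s} = -J_p^a J_s^b N_{a\mathbf{k}b}$, which follows by composing the two symmetries in \eqref{eq:Nsymmetries}. Substituting and collapsing $J_{\mathbf{j}}^p J_p^a = -\delta_{\mathbf{j}}^a$ and $J_{\mathbf{i}}^s J_s^b = -\delta_{\mathbf{i}}^b$ via $J^2 = -\Id$ yields a net factor of $+1$ and leaves $N_{\mathbf{jki}}$ after relabeling. The only real obstacle is bookkeeping: selecting which pair of indices of $N$ should absorb the two $J$ factors so that the surviving Kronecker deltas place the free indices in the target order, and tracking the minus signs coming from \eqref{eq:Nsymmetries} and from $J^2 = -\Id$. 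Once this index choice is fixed the identity is immediate.
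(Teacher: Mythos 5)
Your proposal is correct and takes essentially the same route as the paper: lower the index using $g_{\mathbf{k}y}J_p^y=\gw_{p\mathbf{k}}$, apply Proposition \ref{cor:N=DJ}, and then use the type symmetries \eqref{eq:Nsymmetries} of $N$ together with $J^2=-\Id$ to absorb the two $J$ factors. The only cosmetic difference is that you convert $\gw_{\mathbf{i}e}$ into $J_{\mathbf{i}}^s$ and invoke the $(1,3)$-pair symmetry of $N$ (legitimately obtained by composing the two listed symmetries), whereas the paper applies the symmetry to $N_{pm}^e$ directly and contracts $\gw_{ne}J_v^e=g_{nv}$; your stated reduction $J_{\mathbf{i}}^sJ_{\mathbf{j}}^pN_{p\mathbf{k}s}=-N_{\mathbf{jki}}$ is the correct sign and closes the argument.
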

This characterization results in an identification of the Nijenhuis tensor with the Chern torsion.
\begin{customthm}{3.4 of \cite{KN}}\label{thm:Nid} If $\prs{M,g,J}$ is almost K\"{a}hler then $N = 8 \tau$.
\end{customthm}
\begin{cor}\label{cor:tausymmetries}
The torsion tensor is of type $(3,0+0,3)$ and thus $(2,0+0,2)$ in pairwise indices, so
\begin{align*}
 \tau_{\mathbf{ij}\mathbf{k}} = - \tau_{\mathbf{ji}\mathbf{k}}, \qquad \tau_{\mathbf{ij}\mathbf{k}} = - \tau_{\mathbf{i}sr} J_{\mathbf{j}}^s J_{\mathbf{k}}^r = - J_{\mathbf{i}}^p J_{\mathbf{j}}^q \tau_{pq\mathbf{k}}.
\end{align*}
\end{cor}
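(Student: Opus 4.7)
The plan is to repackage the symmetries of the Nijenhuis tensor recorded in \eqref{eq:Nsymmetries} as symmetries of the Chern torsion via the identification $N = 8\tau$ from Theorem \ref{thm:Nid}. No substantial obstacle arises; this corollary is essentially a bookkeeping exercise downstream of the preceding identifications.

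First, the antisymmetry $\tau_{\mathbf{ij}\mathbf{k}} = -\tau_{\mathbf{ji}\mathbf{k}}$ I would derive directly from the construction of the Chern torsion: the identity $\tau_{\mathbf{ij}}^{\mathbf{k}} = \gY_{\mathbf{ij}}^{\mathbf{k}} - \gY_{\mathbf{ji}}^{\mathbf{k}}$ is manifestly antisymmetric in the two lower indices, and this property persists upon lowering the upper index with $g$.

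For the pairwise $(2,0+0,2)$ conditions, I would invoke Theorem \ref{thm:Nid} and lower an index to produce the fully covariant identity $N_{\mathbf{ijk}} = 8\tau_{\mathbf{ijk}}$. Dividing the two relations in \eqref{eq:Nsymmetries} through by $8$ then immediately yields
\begin{equation*}
\tau_{\mathbf{ij}\mathbf{k}} = -\tau_{\mathbf{i}sr} J_{\mathbf{j}}^s J_{\mathbf{k}}^r = -J_{\mathbf{i}}^p J_{\mathbf{j}}^q \tau_{pq\mathbf{k}},
\end{equation*}
which are precisely the two claimed pairwise symmetries. The remaining pairwise condition on the pair $(\mathbf{i},\mathbf{k})$, required to certify the ``type $(3,0+0,3)$'' label, follows from the previous two together with $J^2 = -\Id$: applying the $J$-contractions from the $(\mathbf{i},\mathbf{j})$ identity and then the $(\mathbf{j},\mathbf{k})$ identity to $\tau_{\mathbf{ijk}}$ and collapsing $J^2 = -\Id$ on the internally-paired index produces $(2,0+0,2)$ in $(\mathbf{i},\mathbf{k})$ as well.

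The only point requiring a small amount of care is bookkeeping of index placement when lowering $N_{\mathbf{ij}}^{\mathbf{k}}$ to $N_{\mathbf{ijk}}$ and matching the order of indices against the statement of \eqref{eq:Nsymmetries}. Because $g$-compatibility \eqref{eq:gcompwdefn} guarantees that $J$ commutes with raising and lowering, no genuine subtlety arises.
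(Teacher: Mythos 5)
Your proposal is correct and is essentially the paper's (implicit) argument: the corollary is stated without proof precisely because it follows by lowering an index in Theorem \ref{thm:Nid} ($N = 8\tau$) and dividing the Nijenhuis symmetries \eqref{eq:Nsymmetries} by $8$, with the antisymmetry in the first two indices coming directly from $\tau_{\mathbf{ij}}^{\mathbf{k}} = \gY_{\mathbf{ij}}^{\mathbf{k}} - \gY_{\mathbf{ji}}^{\mathbf{k}}$. Your observation that the remaining pairwise $(2,0+0,2)$ condition follows from the two displayed ones together with $J^2=-\Id$ is also accurate.
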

\begin{cor}\label{cor:tautauid} The following contractions are type $(1,1)$:
\begin{align*}
 g^{ab} g^{cd} \tau_{\mathbf{i}ac} \tau_{\mathbf{j}bd} \qquad \qquad g^{ab} g^{cd} \tau_{\mathbf{i}ac} \tau_{\mathbf{j}db} \qquad \qquad g^{ab} g^{cd} \tau_{ac\mathbf{i}} \tau_{bd\mathbf{j}} \\
\gw^{ab} g^{cd} \tau_{\mathbf{i}ac} \tau_{\mathbf{j}bd} \qquad \qquad \gw^{ab} g^{cd} \tau_{\mathbf{i}ac} \tau_{\mathbf{j}db} \qquad \qquad \gw^{ab} g^{cd} \tau_{ac\mathbf{i}} \tau_{bd\mathbf{j}}.\end{align*}
\begin{rmk}
This fact comes from Lemma \ref{cor:tausymmetries} combined with \eqref{eq:gcompwdefn}, and represents all combinations of two Chern torsion tensors in $\prs{T^*M}^{\otimes 2}$.
\end{rmk}
\end{cor}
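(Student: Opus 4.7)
The plan is to reduce each of the six claimed $(1,1)$-identities to a statement about how the Chern torsion and the inverse metric/symplectic form interact with $J$. By the decomposition in \eqref{eq:typesdecomp}, a tensor $A_{ij}$ is type $(1,1)$ exactly when $A_{ij} = J_i^s J_j^u A_{su}$, so for each of the six contractions I would take the expression, contract in $J_i^s J_j^u$, and verify that one recovers the original.

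First, I would record two preliminary $J$-invariance facts for the inverse bilinear forms. From $g$-compatibility \eqref{eq:gcompwdefn}, viewed as the matrix identity $G = J^T G J$, together with $J^2 = -\Id$, one obtains $g^{ab} = J_p^a J_q^b g^{pq}$ after inverting. Similarly, from $\omega^{ij} = g^{is}J_s^j$ in \eqref{eq:identitylist}, a short manipulation using $J^2 = -\Id$ gives $\omega^{ab} = J_p^a J_q^b \omega^{pq}$. These two facts mean that in any expression of the form $g^{ab}(\cdots)$ or $\omega^{ab}(\cdots)$ one may freely transfer $J$'s from the contracted slots onto the inverse form without cost.

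Next I would handle a representative case such as $T_{ij} = g^{ab}g^{cd}\tau_{iac}\tau_{jbd}$. Using Corollary \ref{cor:tausymmetries} in the form $\tau_{sac} = -J_s^x J_a^y \tau_{xyc}$, contracting with $J_i^s$ and applying $J_i^s J_s^x = -\delta_i^x$ gives $J_i^s \tau_{sac} = J_a^y \tau_{iyc}$, and analogously $J_j^u \tau_{ubd} = J_b^z \tau_{jzd}$. Substituting into $J_i^s J_j^u T_{su}$ produces
\begin{align*}
J_i^s J_j^u T_{su} = g^{ab}g^{cd} J_a^y J_b^z \tau_{iyc}\tau_{jzd} = g^{yz}g^{cd}\tau_{iyc}\tau_{jzd} = T_{ij},
\end{align*}
where the second equality uses $g^{ab}J_a^y J_b^z = g^{yz}$. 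The remaining five cases follow the exact same pattern: one applies the appropriate $(2,0+0,2)$ symmetry from Corollary \ref{cor:tausymmetries} (in the first pair, last pair, or outer pair of indices of each $\tau$, depending on where $i$ and $j$ sit) to slide $J_i^s, J_j^u$ off $\tau$ and onto the contracted slots, and then collapses the two resulting $J$'s using $J$-invariance of $g^{-1}$ or $\omega^{-1}$.

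The only genuine subtlety is bookkeeping: making sure the appropriate version of the $(2,0+0,2)$ symmetry is invoked so that $J_i^s$ and $J_j^u$ land on distinct contracted indices (so that they pair up with the two $J$'s needed for $J$-invariance of $g^{-1}$ or $\omega^{-1}$), and tracking the signs from $J^2 = -\Id$ and from the antisymmetry $\tau_{ijk} = -\tau_{jik}$. Since both inverse forms are $J$-invariant, mixing $g^{ab}$ and $\omega^{ab}$ in the same contraction poses no additional difficulty, which is what allows all six cases to be treated uniformly.
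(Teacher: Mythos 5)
Your proposal is correct and follows exactly the argument the paper intends: the paper's justification is the one-line remark citing Corollary \ref{cor:tausymmetries} together with the compatibility \eqref{eq:gcompwdefn}, and your write-up is simply the explicit verification of that type argument (sliding $J_i^s,J_j^u$ off the torsion factors via the appropriate pairwise $(2,0+0,2)$ symmetry and absorbing them into the $J$-invariant $g^{-1}$ or $\gw^{-1}$). Your flagged bookkeeping point, including the need for the outer-pair symmetry in the mixed cases such as $g^{ab}g^{cd}\tau_{\mathbf{i}ac}\tau_{\mathbf{j}db}$, is exactly the right subtlety and is handled correctly.
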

The combination of Proposition \ref{cor:N=DJ}, Corollary \ref{cor:Thetaggtau} and Theorem \ref{thm:Nid}  yield, roughly speaking, equivalence between $DJ$, $N$, $\Theta$ and $\tau$. We phrase our work primarily in terms of $\tau$.
\begin{prop}
An almost K\"{a}hler manifold such that $\tau_{\N} \equiv 0$ is K\"{a}hler.
\end{prop}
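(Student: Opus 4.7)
The plan is to chain together the identifications collected just above the proposition and then invoke the standard characterization of the Kähler condition. Concretely, if $\tau_{\nabla} \equiv 0$, then Theorem \ref{thm:Nid} (reproducing Theorem 3.4 of \cite{KN}) immediately gives $N \equiv 0$, i.e.\ the Nijenhuis tensor vanishes on $M$.

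Next, I would feed this into Proposition \ref{cor:N=DJ}: the formula $D_{\mathbf{i}} \omega_{\mathbf{j}\mathbf{k}} = \tfrac{1}{4} \omega_{\mathbf{i}p} N_{\mathbf{jk}}^p$ forces $D\omega \equiv 0$. Since the Levi-Civita connection $D$ already satisfies $Dg \equiv 0$, and $J$ is determined by $g$ and $\omega$ through $J_{\mathbf{i}}^{\mathbf{j}} = \omega_{\mathbf{i}s} g^{\mathbf{j}s}$ (see \eqref{eq:identitylist}), we conclude $DJ \equiv 0$. Equivalently, one can argue directly from Corollary \ref{cor:Thetaggtau}: $8\Theta_{\mathbf{ijk}} = N_{\mathbf{jki}} = 8\tau_{\mathbf{jki}} \equiv 0$, so $\Theta \equiv 0$ and hence $D = \nabla$, after which $\nabla J \equiv 0$ from the defining property \eqref{eq:Cherncharac} of the Chern connection transfers to $DJ \equiv 0$.

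Since an almost Hermitian manifold with $DJ \equiv 0$ is Kähler by definition (and in particular $J$ is integrable by Newlander--Nirenberg, with $\omega$ already closed by the almost Kähler hypothesis), this finishes the proof. There is no real obstacle here: the statement is essentially a bookkeeping corollary of Theorem \ref{thm:Nid} together with the link between $N$ and $D\omega$ supplied by Proposition \ref{cor:N=DJ}; I expect the proof to fit in a couple of lines.
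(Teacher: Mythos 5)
Your argument is correct and is precisely the route the paper intends: the proposition is stated there without proof as an immediate consequence of the identifications $N = 8\tau$ (Theorem \ref{thm:Nid}), $8\Theta_{\mathbf{ijk}} = N_{\mathbf{jki}}$ (Corollary \ref{cor:Thetaggtau}), and the relation between $N$ and $D\gw$ (Proposition \ref{cor:N=DJ}), which is exactly the chain you assemble to get $DJ \equiv 0$ and hence the K\"{a}hler condition. Either of your two variants (via $D\gw \equiv 0$ or via $\Theta \equiv 0$, so $D = \N$) is complete as written.
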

We close this discussion with some useful identities regarding the Chern torsion tensor.
\begin{lemma}\label{lem:tracelesstau} Any trace of $\tau$, with respect to $g$, $\gw$, or $J$, is zero. More precisely
\begin{align*}
g^{ab} \tau_{ab\mathbf{c}} =  g^{bc} \tau_{ab\mathbf{c}} = \gw^{ab} \tau_{ab\mathbf{c}} =\gw^{bc} \tau_{ab\mathbf{c}} = J_a^b \tau_{\mathbf{c}b}^a\equiv 0.
\end{align*}
\begin{proof}This follows from type arguments via \eqref{eq:gcompwdefn} and Corollary \ref{cor:tausymmetries}.
\end{proof}
\end{lemma}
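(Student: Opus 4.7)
The plan is to handle the five traces one by one, in each case combining the appropriate type decomposition of $\tau$ from Corollary \ref{cor:tausymmetries} with the $J$-invariance of $g$ and $\gw$ coming from \eqref{eq:gcompwdefn} and \eqref{eq:identitylist}. The overarching idea is that every trace below pairs a $(2,0+0,2)$-type object with a $(1,1)$-type object (or vice versa), and such pairings necessarily vanish.

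For the $g$-traces, $g^{ab}\tau_{ab\mathbf{c}}=0$ is immediate from the antisymmetry $\tau_{abc}=-\tau_{bac}$ of Corollary \ref{cor:tausymmetries} contracted against the symmetric $g^{-1}$. For $g^{bc}\tau_{ab c}$ I would invoke the $(2,0+0,2)$ property in the second--third pair, $\tau_{abc}=-J_b^{\,p}J_c^{\,q}\tau_{apq}$, and then use the $J$-invariance $g^{bc}J_b^{\,p}J_c^{\,q}=g^{pq}$ (a rewriting of \eqref{eq:gcompwdefn}) to obtain $g^{bc}\tau_{abc}=-g^{pq}\tau_{apq}$, which forces the trace to vanish.

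For the $\gw$-traces the strategy is dual. For $\gw^{ab}\tau_{abc}$ I would apply the $(2,0+0,2)$ property in the first--second pair, $\tau_{abc}=-J_a^{\,p}J_b^{\,q}\tau_{pqc}$, together with the identity $\gw^{ab}J_a^{\,p}J_b^{\,q}=\gw^{pq}$ (which follows from $J\gw^{-1}J^{T}=\gw^{-1}$, itself a consequence of $J^{-1}=-J$ and the $J$-compatibility of $\gw$). This yields $\gw^{ab}\tau_{abc}=-\gw^{ab}\tau_{abc}$. The trace $\gw^{bc}\tau_{abc}$ is entirely analogous, using the $(2,0+0,2)$ property in the second--third pair instead.

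Finally, for $J_a^{\,b}\tau_{\mathbf{c}b}^{\,a}$ I would observe, from \eqref{eq:identitylist}, that $J_a^{\,b}g^{ad}=\gw^{bd}$, so this trace collapses to $\gw^{bd}\tau_{cbd}$, already treated above. There is no genuine obstacle in any of this; the whole lemma is a type/symmetry bookkeeping exercise, and the only care required is tracking which index pair of $\tau$ is being converted via $J$ and which identity among \eqref{eq:gcompwdefn} and \eqref{eq:identitylist} is converting the metric or symplectic contraction.
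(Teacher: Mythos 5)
Your proposal is correct and is exactly the expansion of the paper's one-line proof: type/symmetry bookkeeping combining the $(2,0+0,2)$ pairwise structure of $\tau$ from Corollary \ref{cor:tausymmetries} with the $J$-invariance of $g$ and $\gw$ (and the identity $J_a^b g^{da}=\gw^{bd}$ from \eqref{eq:identitylist}) to reduce every trace to minus itself. No gaps; the reduction of the $J$-trace to the $\gw$-trace is also the intended route.
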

Lastly we state a crucial Bianchi type identity for torsion which will be key to many manipulations.
\begin{customprop}{2 of \cite{Gauduchon}}[\textbf{AK}]\label{prop:taucyclic} 
For an almost K\"{a}hler manifold
\begin{align*}
0 \equiv \tau_{ijk}  +  \tau_{jki} +  \tau_{kij}.
\end{align*}
\end{customprop}
\subsection{Identities of $\mathbf{\cB}$ terms}
\noindent Using the preliminaries above, we record Chern expressions of the $\cB$ terms featured in \eqref{eq:cBforms} from \cite{ST}, as they are prevalent throughout our work.
\begin{lemma}\label{lem:Bformulas}
We have that
\begin{align*}
\cB^1_{ij} =4 g^{kl} g^{wv} \tau_{vki} \tau_{wlj} , \quad \cB^2_{ij} = 4 g_{mn} g^{wv} \tau_{iv}^n \tau_{jw}^m, \quad \cB_{ij} =- 2 \tau_{ik}^w \tau_{jw}^k.
\end{align*}
\begin{proof}
We will first manipulate $\cB^2$. In this case we have that
\begin{align*}
\cB^2_{ij} &= g^{kl} g_{mn} \prs{D_k J_i^m} \prs{D_l J_j^n} &\mbox{\eqref{eq:cBforms}}\\
&= g^{kl} g_{mn} \prs{2 \gw^{vm} \tau_{vik}}\prs{2 \gw^{wn} \tau_{wjl}} &\mbox{Corollary \ref{cor:N=DJ}} \\
&= 4 g^{kl} g^{wv} \tau_{vik} \tau_{wjl} \hfill &\mbox{\eqref{eq:gcompwdefn}} \\
&= 4 g_{mn} g^{wv} \tau_{iv}^n \tau_{jw}^m.
\end{align*}
Next we manipulate $\cB^1$, first generating the expression described above and then characterizing $\cB$.
\begin{align*}
\tfrac{1}{2} \cB^1_{ij} &= \tfrac{1}{2} g^{kl} g_{mn} \prs{D_i J_k^m}\prs{D_j J_l^m } &\mbox{\eqref{eq:cBforms}}\\
&= \tfrac{1}{2} g^{kl} g_{mn} \prs{2 \gw^{vm} \tau_{vki}}\prs{2 \gw^{wn} \tau_{wlj}} &\mbox{Proposition \ref{cor:N=DJ}}\\
&=2 g^{kl} g^{wv} \tau_{vki} \tau_{wlj}\\
&= -2 g^{kl} g^{wv} \prs{\tau_{kiv} + \tau_{ivk} } \tau_{wlj} &\mbox{Proposition \ref{prop:taucyclic}} \\
&= 4 g^{kl} \tau_{ik}^w \tau_{wlj} \\
&= -4 g^{kl} \tau_{ik}^w \prs{\tau_{ljw} + \tau_{jwl} }&\mbox{Proposition \ref{prop:taucyclic}} \\
&= \cB^2_{ij}  - 4 \tau_{ik}^w \tau_{jw}^k. &\mbox{\eqref{eq:cBforms}}
\end{align*}
Therefore
\begin{equation*}
\tfrac{1}{2} \cB^1_{ij} - \cB^2_{ij} =   - 4 \tau_{ik}^w \tau_{jw}^k \triangleq 2\cB_{ij},
\end{equation*}
which concludes the result.
\end{proof}
\end{lemma}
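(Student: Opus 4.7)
The plan is to translate each of the three expressions from Levi-Civita derivatives of $J$ into Chern torsion expressions using the toolkit established in this section. The unifying input is the identification $D_i \gw_{jk} = \tfrac{1}{4} \gw_{ip} N_{jk}^p = 2 \gw_{ip} \tau_{jk}^p$ obtained by combining Proposition \ref{cor:N=DJ} with Theorem \ref{thm:Nid}, together with the $g$-compatibility relations in \eqref{eq:identitylist}, which let us exchange $DJ$ for $D\gw$ and interconvert $g$ and $\gw$ as needed.

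First I would treat $\cB^2$. Starting from its definition in \eqref{eq:cBforms}, I rewrite each $D_k J_i^m$ as $2 \gw^{vm} \tau_{vik}$ and then contract the two resulting $\gw$ factors against $g_{mn}$ using $g_{mn} \gw^{vm} \gw^{wn} = g^{wv}$ (from \eqref{eq:identitylist}). This immediately yields $\cB^2_{ij} = 4 g^{kl} g^{wv} \tau_{vik} \tau_{wjl}$; the alternative form $4 g_{mn} g^{wv} \tau_{iv}^n \tau_{jw}^m$ follows by lowering the two free torsion indices back onto $g_{mn}$.

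For $\cB^1$, I run the same substitution starting from its definition in \eqref{eq:cBforms}. The resulting expression $\tfrac{1}{2}\cB^1_{ij} = 2 g^{kl} g^{wv} \tau_{vki}\tau_{wlj}$ already matches the claimed formula for $\cB^1$ up to a factor of two. The more delicate step is converting this into the compact form of $\cB$. Here the main obstacle is that the index pattern in $\cB^1$ has $i$ and $j$ sitting in the third slot of each torsion, whereas $\cB^2$ has them in the first slot. The key tool is Proposition \ref{prop:taucyclic}: applying the cyclic identity once to the first torsion factor replaces $\tau_{vki}$ by $-\tau_{kiv}-\tau_{ivk}$, and the trace-type cancellation from Lemma \ref{lem:tracelesstau} kills the surviving pure-trace piece, leaving a single term of the form $4 g^{kl} \tau_{ik}^w \tau_{wlj}$. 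A second application of Proposition \ref{prop:taucyclic} to the remaining $\tau_{wlj}$ factor then splits off precisely $\cB^2_{ij}$ (matched via the formula derived above) together with a residual contraction $-4 \tau_{ik}^w \tau_{jw}^k$.

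Finally, assembling via the definition $\cB \triangleq \tfrac{1}{4}\cB^1 - \tfrac{1}{2}\cB^2$ in \eqref{eq:cBforms}, the $\cB^2$ contributions cancel and one is left with $\cB_{ij} = -2 \tau_{ik}^w \tau_{jw}^k$. I expect the only real pitfall to be notational: tracking which slot each index occupies and ensuring that the two applications of Proposition \ref{prop:taucyclic} compose in the correct order to reproduce $\cB^2$ rather than some other permutation of the same tensor.
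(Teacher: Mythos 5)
Your overall route is exactly the paper's: substitute $DJ \mapsto \tau$ via Proposition \ref{cor:N=DJ}, use $g_{mn}\gw^{vm}\gw^{wn}=g^{wv}$ to get the stated forms of $\cB^1$ and $\cB^2$, apply the cyclic identity twice to massage $\cB^1$ into $\cB^2$ plus a residual quadratic term, and then combine via $\cB=\tfrac14\cB^1-\tfrac12\cB^2$. The treatment of $\cB^2$ and the final assembly are fine.

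There is, however, a genuine flaw in your justification of the middle step. After writing $\tfrac12\cB^1_{ij}=2g^{kl}g^{wv}\tau_{vki}\tau_{wlj}$ and applying Proposition \ref{prop:taucyclic} to get $-2g^{kl}g^{wv}\prs{\tau_{kiv}+\tau_{ivk}}\tau_{wlj}$, you claim that Lemma \ref{lem:tracelesstau} ``kills the surviving pure-trace piece,'' leaving a single term equal to $4g^{kl}\tau_{ik}^w\tau_{wlj}$. No pure trace of a single torsion tensor appears here: in both terms every contraction pairs an index of the first $\tau$ with an index of the second $\tau$, so Lemma \ref{lem:tracelesstau} is not applicable, and if one of the two terms really vanished the surviving one would carry coefficient $2$, not $4$ --- so your stated mechanism and your stated output are mutually inconsistent. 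What actually happens (and what the paper uses implicitly) is that the two terms are \emph{equal}: the first is $-2g^{kl}\tau_{ki}^w\tau_{wlj}=2g^{kl}\tau_{ik}^w\tau_{wlj}$ directly, while for the second one relabels the dummy pairs ($k\leftrightarrow v$, $l\leftrightarrow w$) and uses the antisymmetry of $\tau$ in its first two indices to get
\begin{equation*}
g^{kl}g^{wv}\tau_{ivk}\tau_{wlj} \;=\; -\,g^{kl}g^{wv}\tau_{ikv}\tau_{wlj} \;=\; -\,g^{kl}\tau_{ik}^{\,w}\tau_{wlj},
\end{equation*}
so that term also contributes $+2g^{kl}\tau_{ik}^w\tau_{wlj}$, and the two add to give the factor $4$. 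With that correction the second application of Proposition \ref{prop:taucyclic} and the identification of the $\cB^2$ piece go through as you describe, and the conclusion $\cB_{ij}=-2\tau_{ik}^w\tau_{jw}^k$ follows.
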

\begin{cor}\label{cor:Btraces}
We have that
\begin{equation*}
\brs{\tau}^2 = \tfrac{1}{4} g^{ij} \cB^1_{ij} = \tfrac{1}{4} g^{ij}\cB^2_{ij} = - \tfrac{1}{2} g^{ij}\cB_{ij}.
\end{equation*}
\end{cor}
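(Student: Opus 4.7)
The plan is straightforward: for each of the three claimed identities, substitute the corresponding formula from Lemma \ref{lem:Bformulas} and contract with $g^{ij}$, recognizing the resulting full trace as $\brs{\tau}^2$.

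For the first equality, I would insert $\cB^1_{ij} = 4 g^{kl} g^{wv} \tau_{vki} \tau_{wlj}$ and multiply by $g^{ij}$. The three pairs of dummy indices $(v,w)$, $(k,l)$, $(i,j)$ on the two copies of $\tau$ are each paired positionally by a factor of $g$, so the right-hand side collapses to $4 g^{ij} g^{kl} g^{wv} \tau_{vki} \tau_{wlj} = 4\brs{\tau}^2$, yielding $\tfrac{1}{4} g^{ij}\cB^1_{ij} = \brs{\tau}^2$.

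The argument for $\cB^2$ is essentially identical. I would either use the alternate expression $\cB^2_{ij} = 4 g^{kl} g^{wv} \tau_{vik}\tau_{wjl}$ produced in the course of the proof of Lemma \ref{lem:Bformulas}, or expand the raised indices in $\cB^2_{ij} = 4 g_{mn} g^{wv} \tau_{iv}^n \tau_{jw}^m$, absorbing the $g_{mn}$ against the raised $m,n$ to convert everything into three $g^{\cdot\cdot}$-contractions. Either route produces a fully positional contraction and hence $4\brs{\tau}^2$.

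For the $\cB$ equality, the cleanest route is linearity: from $\cB = \tfrac{1}{4}\cB^1 - \tfrac{1}{2}\cB^2$ in \eqref{eq:cBforms}, tracing both sides and invoking the two identities just established immediately expresses $g^{ij}\cB_{ij}$ as a multiple of $\brs{\tau}^2$. Alternatively one could contract $\cB_{ij} = -2 \tau_{ik}^w \tau_{jw}^k$ directly; in that case the trace does not pair the torsion indices positionally, and one would invoke the cyclic identity of Proposition \ref{prop:taucyclic} (combined with antisymmetry in the first two slots from Corollary \ref{cor:tausymmetries}) to rearrange into $\brs{\tau}^2$. No substantial obstacle is anticipated; the whole corollary is routine bookkeeping given Lemma \ref{lem:Bformulas}, the only mildly nontrivial point being the index manipulation inherent in the direct version of the $\cB$ trace.
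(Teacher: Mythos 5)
Your handling of the first two equalities is correct: tracing $\cB^1_{ij}=4g^{kl}g^{wv}\tau_{vki}\tau_{wlj}$ and $\cB^2_{ij}=4g^{kl}g^{wv}\tau_{vik}\tau_{wjl}$ with $g^{ij}$ pairs all three slots of the two torsion factors positionally, so $g^{ij}\cB^1_{ij}=g^{ij}\cB^2_{ij}=4\brs{\tau}^2$. The gap is in the third equality, where the "routine bookkeeping" you defer does not produce the stated constant. Carrying out your own linearity argument: $g^{ij}\cB_{ij}=\tfrac14 g^{ij}\cB^1_{ij}-\tfrac12 g^{ij}\cB^2_{ij}=\brs{\tau}^2-2\brs{\tau}^2=-\brs{\tau}^2$, hence $-\tfrac12 g^{ij}\cB_{ij}=\tfrac12\brs{\tau}^2$, not $\brs{\tau}^2$. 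Your alternative direct route gives the same factor: contracting $\cB_{ij}=-2\tau_{ik}^w\tau_{jw}^k$ yields the twisted trace, and Proposition \ref{prop:taucyclic} together with skewness in the first pair gives $\tau_{abc}\tau_{acb}=\tfrac12\brs{\tau}^2$ (orthonormal frame), so again $-\tfrac12 g^{ij}\cB_{ij}=\tfrac12\brs{\tau}^2$. In fact the three printed equalities are mutually inconsistent for nonzero torsion, by nothing more than the relabeling identity $g^{ij}\cB^1_{ij}=g^{ij}\cB^2_{ij}$ (no torsion symmetries needed): if $\tfrac14 g^{ij}\cB^1_{ij}=\tfrac14 g^{ij}\cB^2_{ij}=\brs{\tau}^2$, then necessarily $-\tfrac12 g^{ij}\cB_{ij}=\tfrac12\brs{\tau}^2$.

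So the concrete defect in your proposal is asserting that all three stated constants follow, when your computation actually proves $\brs{\tau}^2=\tfrac14 g^{ij}\cB^1_{ij}=\tfrac14 g^{ij}\cB^2_{ij}=-g^{ij}\cB_{ij}$; the factor $\tfrac12$ in the last equality of the corollary is evidently a typo. This corrected form is the one the paper relies on afterwards: in Corollary \ref{cor:scalcurv} the trace of $-2\tau_{je}^s\tau_{ks}^e$ is taken to be $-\brs{\tau}^2$, in Lemma \ref{lem:normtauvar} the identity $g^{ij}\tau_{ie}^w\tau_{jw}^e=\tfrac12\brs{\tau}^2$ is used, and in the proof of Proposition \ref{thm:rigidity} one finds $\ip{-\la g,\cB}=\la\brs{\tau}^2$, all equivalent to $-g^{ij}\cB_{ij}=\brs{\tau}^2$. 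A careful write-up should verify the constants, prove the corrected statement, and flag the discrepancy rather than claim the statement as printed.
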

We lastly recall a result which demonstrates the convenient structure of $\cB$ terms in $\dim_{\mathbb{C}}(M) = 2$.
\begin{customlemma}{4.10 of \cite{Dai1}}\label{lem:Dai14.10}
Suppose $\prs{M^4,J,\gw}$ is almost K\"{a}hler. There exists a local unitary frame such that
\begin{align*}
g = \begin{pmatrix}
1 && & \\
&1 & & \\
&& 1&\\
&&& 1
\end{pmatrix}\text{ and }J = \begin{pmatrix}
 & 1& & \\
-1 & & & \\
&& &1\\
&&-1 & 
\end{pmatrix},
\end{align*}
and furthermore in such coordinates
\begin{align*}
\cB^1 = 
\begin{pmatrix}
2 \brs{\tau}^2 && & \\
& 2 \brs{\tau}^2 & & \\
&& 0&\\
&&& 0
\end{pmatrix}, \qquad
\cB^2 = 
\begin{pmatrix}
 \brs{\tau}^2 && & \\
&  \brs{\tau}^2 & & \\
&&\brs{\tau}^2 &\\
&&&\brs{\tau}^2
\end{pmatrix}.
\end{align*}
\end{customlemma}
\subsection{Chern curvature identities}
Here we record some main identities concerning the Chern connection's associated curvature $\Omega_{\N}$ and torsion $\tau_{\N}$.
\begin{lemma}\label{lem:Cherncurvsym}
Let $\prs{M^{2n},g,J}$ be almost Hermitian. Then
\begin{align*}
\Omega_{\mathbf{ijkl}} = - \Omega_{\mathbf{jikl}} = - \Omega_{\mathbf{ijlk}} = \Omega_{\mathbf{ij}ab} J^a_{\mathbf{k}} J^b_{\mathbf{l}} .
\end{align*}
\end{lemma}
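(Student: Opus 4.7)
The three stated identities encode the three structural properties of the Chern connection collected in \eqref{eq:Cherncharac}: the first-pair antisymmetry from the commutator definition of $\Omega$, the second-pair antisymmetry from $\N g \equiv 0$ (which follows from $\N \gw \equiv 0$ and $\N J \equiv 0$ via $g_{ij} = J_i^s \gw_{js}$), and the $J$-invariance from $\N J \equiv 0$. My plan is to apply the Ricci identity $(\N_i \N_j - \N_j \N_i) T = \Omega(\partial_i,\partial_j) \cdot T$ (which in coordinates carries no torsion correction because $[\partial_i,\partial_j]=0$) to each of $g$ and $J$ and read off the corresponding symmetry.

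The first-pair antisymmetry $\Omega_{ijkl} = -\Omega_{jikl}$ is immediate from $\Omega_{ij} = -\Omega_{ji}$. For the second, the Ricci identity applied to $g$ gives
\[
0 = (\N_i \N_j - \N_j \N_i) g_{kl} = -\Omega_{ij}{}^p{}_k g_{pl} - \Omega_{ij}{}^p{}_l g_{kp} = -\Omega_{ijlk} - \Omega_{ijkl},
\]
so $\Omega_{ijkl} = -\Omega_{ijlk}$.

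For the $J$-invariance, the Ricci identity applied to $J$ yields
\[
0 = (\N_i \N_j - \N_j \N_i) J^k_l = \Omega_{ij}{}^k{}_p J^p_l - \Omega_{ij}{}^p{}_l J^k_p,
\]
so the curvature endomorphism commutes with $J$. Contracting this commutation with $J^q_k$ and using $(J^2)^q_p = -\delta^q_p$ produces the clean identity $J^q_k J^p_l \Omega_{ij}{}^k{}_p = -\Omega_{ij}{}^q{}_l$. The main (routine) obstacle is then matching this against the claimed form: lowering $q$ via $g$ and invoking $J_k^a g_{qa} = \gw_{kq}$ and $\gw_{ab} = -\gw_{ba}$ from \eqref{eq:identitylist}, then expanding $\Omega_{ijab} J^a_m J^b_l$ by the same relations, exhibits both sides as $\gw_{mk} J^p_l \Omega_{ij}{}^k{}_p$, yielding $\Omega_{ijml} = \Omega_{ijab} J^a_m J^b_l$ after relabeling. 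The only delicate point is index bookkeeping around the $g$-to-$\gw$ conversions; everything else is forced by the identities in \eqref{eq:identitylist}.
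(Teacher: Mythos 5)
The paper states this lemma without proof (it is quoted as a standard symmetry of the Chern curvature), so there is no in-paper argument to compare against; your route — first-pair antisymmetry from the definition of $\Omega$, last-pair antisymmetry from $\N g\equiv 0$, and $J$-invariance from $\N J\equiv 0$ via the commutator — is exactly the standard one and does prove the statement. One justification, however, is wrong as stated: the Ricci identity for the Chern connection \emph{does} carry a torsion correction even in a coordinate frame; the term $-\tau_{ij}^{e}\,\N_{e}T$ arises from antisymmetrizing the connection coefficient on the differentiation index, not from $[\del_i,\del_j]$, and it appears explicitly in the paper's own Lemma \ref{lem:commutator}. Your displayed equations survive only because the tensors you differentiate are $\N$-parallel, so the torsion term hits $\N g$ and $\N J$ and vanishes — you should say that, rather than claim the correction is absent. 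The remaining step is fine in substance: from $[\Omega_{ij},J]=0$ one gets $\Omega_{ija}^{\ c}J_k^a=J_a^c\Omega_{ijk}^{\ a}$, and then $\Omega_{ijab}J_k^aJ_l^b=\Omega_{ija}^{\ c}J_k^a\,\gw_{lc}=\Omega_{ijk}^{\ a}\,J_a^c\gw_{lc}=\Omega_{ijk}^{\ a}g_{al}=\Omega_{ijkl}$, using $g_{cb}J_l^b=\gw_{lc}$ and $J_a^c\gw_{lc}=g_{al}$ from \eqref{eq:identitylist}; note that your intermediate claim that both sides equal $\gw_{mk}J_l^p\Omega_{ij\,p}^{\ \ k}$ is off by a sign (that contraction equals $\Omega_{ijlm}=-\Omega_{ijml}$), though this only reflects the index bookkeeping you flagged and does not affect the conclusion.
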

\noindent Aspects of the Riemann curvature tensor translate with some residual torsion terms. Define \emph{Chern--Ricci curvature} ($\cP$), \emph{twisted Chern--Ricci curvature} ($\cS$), and \emph{twisted Ricci curvature} ($\cQ$, often referred to as $\Rc (\gw)$) by
\begin{align*}
\cP_{\mathbf{ab}} &\triangleq  \gw^{cd} \Omega_{\mathbf{ab}cd} = J_c^e J_d^f \gw^{cd}   \Omega_{\mathbf{ab}ef} =  J_c^e  g^{cf}   \Omega_{\mathbf{ab}ef} =  J_c^e  \Omega_{\mathbf{ab}e}^c \\
\cS_{\mathbf{cd}}  &\triangleq  \gw^{ab} \Omega_{ab\mathbf{cd}}, \qquad \cQ_{\mathbf{cd}} \triangleq  \gw^{ab} \Rm_{ab\mathbf{cd}} \qquad \cV_{\mathbf{ab}}\triangleq  \Omega_{r \mathbf{ab}}^r,
\end{align*}
and finally, the \emph{Chern scalar curvature} is
\begin{equation*}
\varrho \triangleq  \gw^{ba} \cP_{ab} = \gw^{dc} \cS_{cd}.
\end{equation*}
This differs from \emph{Riemannian scalar curvature} $\cR$ by a multiple of $|\tau|^2$ (cf. Corollary \ref{cor:scalcurv}).
\subsection{Analogues of Riemannian symmetries}
We compute parallel identities to the Riemannian case using the Chern curvature, keeping track of the torsion tensor quantities.
\begin{lemma}\label{lem:commutator}
We have that
\begin{align*}
\brk{\N_a,\N_b} A_{q_1 \cdots q_n}^{p_1 \cdots p_n} =\Sigma_{i=1}^n \Omega^{p_i}_{abd} A^{p_1 \cdots \widehat{p_i} d \cdots p_n}_{q_1 \cdots q_n} - \Sigma_{j=1}^m \Omega_{abq_j}^d A^{p_1 \cdots p_n}_{q_1 \cdots \widehat{q_j} d \cdots q_n} - \tau_{ab}^e \prs{ \N_e A_{q_1 \cdots q_n}^{p_1 \cdots p_n}},
\end{align*}
where here $\hat{p}_i, \hat{q}_j$ denotes the excision of these indices.
\end{lemma}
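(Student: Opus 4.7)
The plan is the standard three-step derivation of the commutator of covariant derivatives, specialized to the Chern connection $\N = \del + \gY$ with curvature $\Omega$ and torsion $\tau$; none of the almost K\"{a}hler structure is actually needed here, only the definitions of $\Omega$ and $\tau$ from \eqref{eq:Cherndef}. First I verify the identity on scalars: for $f \in C^{\infty}(M)$, $\N_a \N_b f = \del_a \del_b f - \gY_{ab}^c \del_c f$, and antisymmetrizing in $(a,b)$ kills the symmetric pure-derivative term, leaving
\begin{align*}
\brk{\N_a, \N_b} f = -\prs{\gY_{ab}^c - \gY_{ba}^c} \del_c f = -\tau_{ab}^e \N_e f,
\end{align*}
which is the claimed identity with an empty curvature sum.

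Next I check a vector field $V^p$ by expanding $\N_a \N_b V^p$ using $\N_b V^p = \del_b V^p + \gY_{bc}^p V^c$ and subtracting the $(a,b)$-swap. The pure second-derivative term cancels, as do the two cross-terms $\gY_{bc}^p \del_a V^c + \gY_{ac}^p \del_b V^c$ (symmetric in $(a,b)$), leaving
\begin{align*}
\brk{\N_a, \N_b} V^p = \prs{\del_a \gY_{bd}^p - \del_b \gY_{ad}^p + \gY_{ae}^p \gY_{bd}^e - \gY_{be}^p \gY_{ad}^e} V^d - \prs{\gY_{ab}^c - \gY_{ba}^c} \N_c V^p,
\end{align*}
where the first parenthesis is exactly the Chern curvature $\Omega_{abd}^p$ and the second is $\tau_{ab}^c$. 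The parallel computation for a one-form yields $\brk{\N_a, \N_b}\eta_q = -\Omega_{abq}^d \eta_d - \tau_{ab}^e \N_e \eta_q$.

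For the general $(n,m)$-tensor $A$, introduce the operator $L_{ab} \triangleq \brk{\N_a, \N_b} + \tau_{ab}^e \N_e$. A direct check shows $\brk{\N_a,\N_b}$ is a derivation on tensor products, and $\N_e$ is too, so $L_{ab}$ is a derivation; by the scalar calculation $L_{ab}$ annihilates functions and is therefore tensorial, and by the vector/one-form calculations its action on $V^p$ and $\eta_q$ is $\Omega_{abd}^p V^d$ and $-\Omega_{abq}^d\eta_d$ respectively. Since $T^*M$ and $TM$ generate all tensor bundles under $\otimes$ and contraction, induction on tensor rank distributes $L_{ab}$ across the slots of $A$, contributing one curvature term per upper index with sign $+$ and per lower index with sign $-$. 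Rearranging $L_{ab} A = \brk{\N_a,\N_b} A + \tau_{ab}^e \N_e A$ then yields the stated formula. The one subtlety --- ensuring the torsion correction appears \emph{once} (as an overall derivation) rather than once per slot --- is precisely what introducing $L_{ab}$ handles cleanly; there is no real obstacle beyond careful index bookkeeping.
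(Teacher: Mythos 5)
Your argument is correct: the scalar, vector and one-form computations match the paper's conventions for $\Omega$ (as in the expansion used in Lemma \ref{lem:RmasOmega}) and for $\tau = \gY_{ab}^c - \gY_{ba}^c$, and the passage to general tensors via the derivation $L_{ab} = \brk{\N_a,\N_b} + \tau_{ab}^e \N_e$, which kills functions and hence is tensorial and determined by its action on $TM$ and $T^*M$, is the standard and complete way to get the full statement. The paper states Lemma \ref{lem:commutator} without proof, treating it as the usual Ricci identity for a connection with torsion, so there is no distinct approach to compare against; your write-up supplies exactly the omitted standard argument.
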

\begin{customthm}{III.5.3 of \cite{KN}}\label{thm:CBianchi}The following identities hold.
\begin{align}
\Omega_{abc}^k + \Omega_{cab}^k + \Omega_{bca}^k &=   \prs{\N_a \tau_{bc}^k + \N_b \tau_{ca}^k + \N_c \tau_{ab}^k } - \prs{\tau_{as}^k \tau_{bc}^s +\tau_{cs}^k \tau_{ab}^s+ \tau_{bs}^k \tau_{ca}^s}, \label{eq:curve1}\\
\N_a \Omega_{bcj}^k + \N_b \Omega_{caj}^k +  \N_c \Omega_{abj}^k &=  \Omega_{aij}^k \tau_{bc}^i + \Omega_{bij}^k \tau_{ca}^i+ \Omega_{cij}^k \tau_{ab}^i.\label{eq:curve2}
%\N_a \Omega_{jkbc} + \N_b \Omega_{jkca} +  \N_c \Omega_{jkab} &= \textcolor{red}{(finish)}.\label{eq:curve3}
\end{align}
\end{customthm}
\begin{cor}\label{cor:PBianchi}
By tracing through \eqref{eq:curve2}, following identities hold.
\begin{align*}
\N_a \cP_{bc} + \N_b \cP_{ca} +  \N_c \cP_{ab} &=  \cP_{ai} \tau_{bc}^i + \cP_{bi} \tau_{ca}^i+ \cP_{ci} \tau_{ab}^i.
%\N_a \cS_{j}^k + 2 \gw^{bc} \N_b \Omega_{caj}^k  &=  2 \gw^{bc} \Omega_{bij}^k \tau_{ca}^i\\
%\N_c \varrho &= 2 \gw^{ab} \prs{ \N_a \cP_{bc}  -  \cP_{ai} \tau_{bc}^i }.
\end{align*}
%
\begin{comment}
\begin{proof}
These follow by taking traces by $J_j^k$, $\gw^{bc}$, and $J_j^k \gw_{ab}$ on the second identity of Theorem \ref{thm:CBianchi}.
\end{proof}
\end{comment}
\end{cor}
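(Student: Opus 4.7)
The plan is to contract both sides of the second Chern Bianchi identity \eqref{eq:curve2} with $J_k^j$ and invoke the fundamental property $\N J \equiv 0$ of the Chern connection (from \eqref{eq:Cherncharac}). Concretely, I would start from
\begin{align*}
\N_a \Omega_{bcj}^k + \N_b \Omega_{caj}^k + \N_c \Omega_{abj}^k = \Omega_{aij}^k \tau_{bc}^i + \Omega_{bij}^k \tau_{ca}^i+ \Omega_{cij}^k \tau_{ab}^i,
\end{align*}
and multiply through by $J_k^j$.

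For the left-hand side, since $\N_a J_k^j = 0$, the factor $J_k^j$ commutes past each covariant derivative, producing terms of the form $\N_a \prs{J_k^j \Omega_{bcj}^k}$. Recognizing that $\cP_{bc} = J_k^j \Omega_{bcj}^k$ per the definition of the Chern--Ricci form (which follows from $\cP_{\mathbf{ab}} = J_c^e g^{cf} \Omega_{\mathbf{ab}ef} = J_c^e \Omega_{\mathbf{ab}e}^c$ after relabeling dummy indices), the left-hand side collapses to $\N_a \cP_{bc} + \N_b \cP_{ca} + \N_c \cP_{ab}$. On the right-hand side, the torsion factors $\tau_{bc}^i, \tau_{ca}^i, \tau_{ab}^i$ carry no $j$ or $k$ indices, so the contraction passes directly through to give $\prs{J_k^j \Omega_{aij}^k} \tau_{bc}^i = \cP_{ai} \tau_{bc}^i$ and similarly for the other two terms.

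There is no real obstacle here: the corollary is genuinely an immediate trace, and the only subtle point worth highlighting in the write-up is the use of Chern compatibility $\N J \equiv 0$ to move $J_k^j$ inside the covariant derivatives on the left. Consequently the proof reduces to the single sentence contraction just described, and one would likely omit or relegate it to a parenthetical remark exactly as the author appears to have done.
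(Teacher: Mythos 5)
Your proof is correct and is exactly the paper's (commented-out) argument: contract \eqref{eq:curve2} with $J_k^j$, use $\N J \equiv 0$ to move $J$ inside the derivatives, and identify $J_k^j\Omega_{\cdot\,\cdot\,j}^k = \cP$ on both sides. Nothing further is needed.
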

\noindent The next identity is another 'translation' of a Riemannian curvature symmetry to Chern curvature. This identity is crucial for computing remarkably clean identities curvature quantities.
\begin{lemma}\label{lem:Chernsymm} The following holds:
\begin{align}
\begin{split}\label{eq:Tdefn}
 \Omega_{bdac}- \Omega_{acbd} 
&= \prs{ \N_a \tau_{bdc} + \N_b \tau_{cad} + \N_c \tau_{dba} + \N_d \tau_{acb} } \\
&\hsp- \prs{  \tau_{asd}\tau_{bc}^s + \tau_{bsc} \tau_{da}^s + \tau_{dsc}\tau_{ab}^s + \tau_{asb} \tau_{cd}^s }\\
&\triangleq  T_{bdac},
\end{split}
\end{align}
which implies the following symmetries of $T$:
\begin{equation}\label{eq:Tsyms}
T_{abij} = - T_{baij}=  - T_{abji}, \qquad T_{abij} = -T_{ijab}.
\end{equation}
\begin{proof}
Starting from Theorem \ref{thm:CBianchi} we write out
\begin{align*}
\Omega_{abcd} + \Omega_{cabd} + \Omega_{bcad} &=   \prs{\N_a \tau_{bcd} + \N_b \tau_{cad} + \N_c \tau_{abd} } - \prs{\tau_{asd} \tau_{bc}^s +\tau_{csd} \tau_{ab}^s+ \tau_{bsd} \tau_{ca}^s} \\
\Omega_{dabc} + \Omega_{abdc} + \Omega_{bdac} &=   \prs{\N_d \tau_{abc} + \N_a \tau_{bdc} + \N_b \tau_{dac} } - \prs{\tau_{dsc} \tau_{ab}^s +\tau_{asc} \tau_{bd}^s+ \tau_{bsc} \tau_{da}^s} \\
\Omega_{cdab} + \Omega_{dacb} + \Omega_{acdb}&=   \prs{\N_c \tau_{dab} + \N_d \tau_{acb} + \N_a \tau_{cdb} } - \prs{\tau_{csb} \tau_{da}^s +\tau_{dsb} \tau_{ac}^s+ \tau_{asb} \tau_{cd}^s} \\
\Omega_{bcda} + \Omega_{dbca} + \Omega_{cdba}&=   \prs{\N_b \tau_{cda} + \N_c \tau_{dba} + \N_d \tau_{bca} } - \prs{\tau_{bsa} \tau_{cd}^s +\tau_{csa} \tau_{db}^s+ \tau_{dsa} \tau_{bc}^s}.
\end{align*}
We sum together each line. First we approach the higher order terms
\begin{align*}
\Sigma_{i=1}^{12} \prs{Q^i}_{abcd} &=  \N_a \tau_{bcd} + \N_b \tau_{cad}+ \N_c \tau_{abd} + \N_d \tau_{abc} + \N_a \tau_{bdc} + \N_b \tau_{dac} \\
& + \N_c \tau_{dab} + \N_d \tau_{acb} + \N_a \tau_{cdb} + \N_b \tau_{cda} + \N_c \tau_{dba} + \N_d \tau_{bca}.
\end{align*}
Via Proposition \ref{prop:taucyclic}\begin{equation*}
Q^1 + Q^9 = Q^{5}, \quad Q^6 + Q^{10} = Q^2, \quad Q^3 + Q^7 = Q^{11}, \quad Q^4 + Q^{12} = Q^8.
\end{equation*}
Thus all that remains is
\begin{align*}
\Sigma_{i=1}^{12} \prs{Q^i}_{abcd}  = 2 \prs{ \N_a \tau_{bdc} + \N_b \tau_{cad} + \N_c \tau_{dba} + \N_d \tau_{acb} }.
\end{align*}
Likewise for the quadratic $\tau$ terms we have that
\begin{align*}
\Sigma_{i=1}^{12} \prs{R^i}_{abcd} &= - \prs{\tau_{asd} \tau_{bc}^s +\tau_{csd} \tau_{ab}^s+ \tau_{bsd} \tau_{ca}^s} - \prs{\tau_{dsc} \tau_{ab}^s +\tau_{asc} \tau_{bd}^s+ \tau_{bsc}\tau_{da}^s}\\
&- \prs{\tau_{csb} \tau_{da}^s +\tau_{dsb} \tau_{ac}^s+ \tau_{asb} \tau_{cd}^s} - \prs{\tau_{bsa} \tau_{cd}^s +\tau_{csa} \tau_{db}^s+ \tau_{dsa} \tau_{bc}^s}.
\end{align*}
We manipulate the following four terms using Proposition \ref{prop:taucyclic} and relabelling.

\begin{minipage}[t]{0.5\textwidth}
\begin{align*}
\prs{R^5 + R^{11}}_{abcd} &=- \prs{ \tau_{asc} - \tau_{csa} }\tau_{bd}^s \\
&=- \prs{ \tau_{asc} + \tau_{sca} }\tau_{bd}^s \\
&= \tau_{cas}\tau_{bd}^s.
\end{align*}
\end{minipage}
\begin{minipage}[t]{0.5\textwidth}
\begin{align*}
\prs{R^3 + R^8}_{abcd} &= - \prs{\tau_{bsd} - \tau_{dsb} } \tau_{ca}^s \\
&= - \prs{\tau_{bsd} + \tau_{sdb} } \tau_{ca}^s \\
&=\tau_{dbs}\tau_{ca}^s.
\end{align*}
\end{minipage}
It follows that
\begin{equation*}
0 \equiv R^3 + R^5 +R^8 + R^{11}.
\end{equation*}
We manipulate the remaining terms using Proposition \ref{prop:taucyclic} and Corollary \ref{cor:tausymmetries},

\begin{minipage}[t]{0.5\textwidth}
\begin{align*}
\prs{R^1 + R^{12}}_{abcd} &=- \prs{\tau_{asd} + \tau_{dsa}}\tau_{bc}^s \\
&= - \prs{- \tau_{sad} + \tau_{dsa}}\tau_{bc}^s \\
&=- \prs{2 \tau_{dsa} + \tau_{ads}}\tau_{bc}^s \\
&= - 2 \tau_{dsa}\tau_{bc}^s -  \tau_{ads}\tau_{bc}^s.
\end{align*}
\begin{align*}
\prs{R^2 + R^4}_{abcd} &=- \prs{ \tau_{csd} +  \tau_{dsc} }\tau_{ab}^s \\
&=- \prs{ \tau_{csd} -  \tau_{sdc} }\tau_{ab}^s \\
&=- \prs{ 2\tau_{csd} +  \tau_{dcs} }\tau_{ab}^s \\
&=- 2 \tau_{csd} \tau_{ab}^s -  \tau_{dcs} \tau_{ab}^s.
\end{align*}
\end{minipage}
\begin{minipage}[t]{0.5\textwidth}
\begin{align*}
\prs{R^6 + R^{7}}_{abcd} &= - \prs{\tau_{bsc} + \tau_{csb}} \tau_{da}^s\\
&= - \prs{\tau_{bsc} - \tau_{scb}} \tau_{da}^s\\
&= - \prs{2\tau_{bsc} + \tau_{cbs} } \tau_{da}^s\\
&= - 2 \tau_{bsc} \tau_{da}^s - \tau_{cbs}  \tau_{da}^s .
\end{align*}
\begin{align*}
\prs{R^9 + R^{10}}_{abcd} &= - \prs{ \tau_{asb} + \tau_{bsa} } \tau_{cd}^s \\
&= - \prs{ \tau_{asb} - \tau_{sba} } \tau_{cd}^s \\ 
&= - \prs{2 \tau_{asb}  + \tau_{bas} } \tau_{cd}^s \\
&=- 2  \tau_{asb} \tau_{cd}^s -  \tau_{bas} \tau_{cd}^s.
\end{align*}
\end{minipage}

Summing these up and rearranging accordingly yields
\begin{align*}
- \tfrac{1}{2} \Sigma_{i=1}^{12} \prs{R^i }_{abcd}
&=  \prs{\tau_{dsa}\tau_{bc}^s + \tau_{bsc} \tau_{da}^s + \tau_{csd} \tau_{ab}^s + \tau_{asb} \tau_{cd}^s }\\
&\hsp + \prs{\tau_{ads}\tau_{bc}^s +  \tau_{dcs} \tau_{ab}^s} \\
&= \prs{  \tau_{asd}\tau_{bc}^s + \tau_{bsc} \tau_{da}^s + \tau_{dsc}\tau_{ab}^s + \tau_{asb} \tau_{cd}^s }, &\mbox{Proposition \ref{prop:taucyclic}}
\end{align*}
which concludes the result.
\end{proof}
\end{lemma}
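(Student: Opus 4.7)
The plan is to mimic the classical Riemannian derivation of the pair symmetry $\Rm_{ijkl} = \Rm_{klij}$ while carrying along the torsion corrections that the first Bianchi identity \eqref{eq:curve1} of Theorem \ref{thm:CBianchi} produces in the Chern setting. Concretely, I would write down four instances of \eqref{eq:curve1} by cycling the quadruple $(a,b,c,d)$: first with $(a,b,c)$ cycled and $d$ in the fourth slot, then $(d,a,b)$ with $c$, then $(c,d,a)$ with $b$, and finally $(b,c,d)$ with $a$. I would then sum the four resulting identities.

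On the left-hand side, each of the twelve curvature terms can be paired with another via the Chern antisymmetries of Lemma \ref{lem:Cherncurvsym}; after the pairings, the twelve terms collapse to $2\prs{\Omega_{bdac} - \Omega_{acbd}}$, matching the desired LHS up to a factor of two. So it remains to massage the right-hand side into twice the claimed $T_{bdac}$.

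The substantive work is on the right-hand side. Among the twelve $\N \tau$ terms, four already coincide with the target derivatives $\N_a \tau_{bdc}$, $\N_b \tau_{cad}$, $\N_c \tau_{dba}$, $\N_d \tau_{acb}$; the cyclic identity from Proposition \ref{prop:taucyclic}, applied after using Corollary \ref{cor:tausymmetries} to swap the first two indices, should combine the remaining eight in pairs to produce a second copy of each of these four targets, giving a net factor of two. The twelve quadratic torsion terms are handled in the same spirit: applying Proposition \ref{prop:taucyclic} together with Corollary \ref{cor:tausymmetries} in various pairings, one expects four of the pairings to cancel outright (the two blocks of three produced by cycles that exclude the free index of $\Omega$), while the remaining pairings double up the four claimed $\tau\tau$ contractions. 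Dividing the summed identity by two then yields precisely the formula defining $T_{bdac}$.

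The symmetries \eqref{eq:Tsyms} follow immediately from the characterization $T_{bdac} = \Omega_{bdac} - \Omega_{acbd}$: the antisymmetries $T_{abij} = -T_{baij} = -T_{abji}$ come directly from Lemma \ref{lem:Cherncurvsym}, while the pair antisymmetry $T_{abij} = -T_{ijab}$ is simply the observation that swapping the two index pairs in the defining difference interchanges the two $\Omega$-terms. The main obstacle is purely combinatorial, namely keeping the signs, orderings, and cyclic relabellings consistent across the twelve $\N \tau$ and twelve $\tau\tau$ terms; I anticipate the quadratic piece to be the most delicate step, where several of the terms only collapse after iterated applications of Proposition \ref{prop:taucyclic} combined with the antisymmetry of $\tau$ in its first two indices.
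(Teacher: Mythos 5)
Your proposal is correct and follows essentially the same route as the paper: sum the four cyclically permuted instances of the torsion Bianchi identity \eqref{eq:curve1}, collapse the twelve curvature terms to $2\prs{\Omega_{bdac}-\Omega_{acbd}}$ via Lemma \ref{lem:Cherncurvsym}, and reduce the $\N\tau$ and $\tau\ast\tau$ terms with Proposition \ref{prop:taucyclic} and Corollary \ref{cor:tausymmetries}, reading off \eqref{eq:Tsyms} from the resulting characterization of $T$. The only minor discrepancy is bookkeeping: in the quadratic part the cancellation occurs across the four cross-paired terms ($R^3+R^5+R^8+R^{11}=0$ in the paper's labelling) rather than whole cyclic blocks, and the remaining pairs double up only after a final recombination of leftover terms via the cyclic identity, exactly as the paper carries out.
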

\subsection{Riemannian curvature conversion}
We relate Riemannian curvature quantities to Chern connection counterparts.
\begin{lemma}\label{lem:RmasOmega}
We have that
\begin{align}
\begin{split}\label{eq:Rm}
\Rm_{ijkl} &= \Omega_{ijkl} + \prs{\N_i \tau_{klj}} - \prs{\N_j \tau_{kli}}+  \tau_{sli}\prs{ g^{sd} \tau_{kdj}} - \tau_{slj}\prs{g^{sd} \tau_{kdi}}  +  \tau_{ij}^c\tau_{klc}.
\end{split}
\end{align}
\begin{proof} We compute, whittling down to connection coefficients
\begin{align*}
\Rm_{ijk}^l &= \del_i \gG_{jk}^l -  \del_j \gG_{ik}^l + \gG_{is}^l \gG_{jk}^s -  \gG_{js}^l \gG_{ik}^s \\
&= \prs{\del_i \gY_{jk}^l -  \del_j \gY_{ik}^l + \gY_{is}^l \gY_{jk}^s -  \gY_{js}^l \gY_{ik}^s } + \prs{\del_i \Theta_{jk}^l -  \del_j \Theta_{ik}^l +  \Theta_{is}^l \Theta_{jk}^s -  \Theta_{js}^l \Theta_{ik}^s }\\
& \hsp + \gY_{is}^l \Theta_{jk}^s -  \gY_{js}^l \Theta_{ik}^s + \Theta_{is}^l \gY_{jk}^s -  \Theta_{js}^l \gY_{ik}^s. &\mbox{\eqref{eq:Cherndef}}
\end{align*}
We manipulate two terms
\begin{align*}
\del_i \Theta_{jk}^l - \del_j \Theta_{ik}^l &= \prs{\N_i \Theta_{jk}^l + \gY_{ij}^c \Theta_{ck}^l + \gY_{ik}^c \Theta_{jc}^l - \gY_{ic}^l \Theta_{jk}^c } - \prs{\N_j \Theta_{ik}^l + \gY_{ji}^c \Theta_{ck}^l + \gY_{jk}^c \Theta_{ic}^l - \gY_{jc}^l \Theta_{ik}^c } \\
&= \prs{\N_i \Theta_{jk}^l - \N_j \Theta_{ik}^l} + \tau_{ij}^c \Theta_{ck}^l - \prs{\gY_{jk}^c \Theta_{ic}^l - \gY_{ik}^c \Theta_{jc}^l  + \gY_{ic}^l \Theta_{jk}^c - \gY_{jc}^l \Theta_{ik}^c}.
\end{align*}
Inserting this into our expression for $\Rm$ it follows that
\begin{align}
\begin{split}\label{eq:RmOmega}
\Rm_{ijk}^l &= \Omega_{ijk}^l +  \N_i \Theta_{jk}^l - \N_j \Theta_{ik}^l + \Theta_{is}^l \Theta_{jk}^s - \Theta_{js}^l \Theta_{ik}^s + \tau_{ij}^c \Theta_{ck}^l\\
&= \Omega_{ijk}^l + g^{lq}\prs{\N_i \tau_{kqj}} -g^{lq} \prs{\N_j \tau_{kqi}}  + \prs{ g^{lq} \tau_{sqi} }\prs{ g^{sd} \tau_{kdj}} - \prs{g^{lq} \tau_{sqj} }\prs{g^{sd} \tau_{kdi}}\\
&\hsp  + g^{lq} g_{cp} \tau_{kq}^p \tau_{ij}^c.
\end{split}
\end{align}
Lastly, lowering the last index by multiplication by $g$ yields the result.
\end{proof}
\end{lemma}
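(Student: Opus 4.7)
The natural strategy is to exploit the decomposition $\gG = \gY + \Theta$ from \eqref{eq:Cherndef} and track the difference between the curvature formulas of Levi--Civita and Chern. Concretely, I would first expand
\begin{align*}
\Rm_{ijk}^l = \del_i \gG_{jk}^l - \del_j \gG_{ik}^l + \gG_{is}^l \gG_{jk}^s - \gG_{js}^l \gG_{ik}^s
\end{align*}
by substituting $\gG = \gY + \Theta$. The pure $\gY$ terms reassemble into $\Omega_{ijk}^l$, the cross terms will need to be combined with derivatives of $\Theta$, and the pure $\Theta^2$ terms remain.

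Next, to express everything in terms of the Chern covariant derivative, I would rewrite the partial derivatives $\del_i \Theta_{jk}^l$ as $\N_i \Theta_{jk}^l$ plus Chern connection coefficient terms (using $\N = \del + \gY$ acting on the $(1,2)$-tensor $\Theta$). Subtracting the $i \leftrightarrow j$ version, the symmetric $\gY \Theta$ contributions rearrange to precisely cancel the cross $\gY \Theta$ terms from the expansion of $\gG^2$, leaving a clean $\tau_{ij}^c \Theta_{ck}^l$ remainder from the skew part of $\gY$. This yields the key intermediate identity
\begin{align*}
\Rm_{ijk}^l = \Omega_{ijk}^l + \N_i \Theta_{jk}^l - \N_j \Theta_{ik}^l + \Theta_{is}^l \Theta_{jk}^s - \Theta_{js}^l \Theta_{ik}^s + \tau_{ij}^c \Theta_{ck}^l.
\end{align*}

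Finally, I invoke Corollary \ref{cor:Thetaggtau} together with Theorem \ref{thm:Nid} to replace each $\Theta$ with (a raised version of) $\tau$: namely $\Theta_{ij}^k = g^{kq}\tau_{jqi}$, after applying the symmetries of $\tau$ from Corollary \ref{cor:tausymmetries} to reposition indices. Substituting into the formula above and lowering the upper index with $g$ produces the stated expression for $\Rm_{ijkl}$.

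The main obstacle, in my estimation, is a bookkeeping one rather than a conceptual one: keeping the index placements on $\Theta$ versus $\tau$ straight. In particular one must be careful that $\Theta$ is \emph{not} skew in its lower two indices (its skew part is $\tau$, but $\Theta$ also has a symmetric piece), so the cancellation of the four mixed $\gY \Theta$ terms must be verified directly rather than assumed from antisymmetry. Once this is done, the passage from $\Theta$ to $\tau$ uses only \eqref{cor:1.5} and the $(2,0+0,2)$-type symmetries recorded in Corollary \ref{cor:tausymmetries}.
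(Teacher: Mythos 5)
Your proposal is correct and follows essentially the same route as the paper: expand $\Rm$ via $\gG = \gY + \Theta$, rewrite $\del_i \Theta_{jk}^l - \del_j \Theta_{ik}^l$ in terms of $\N$ so that the mixed $\gY \ast \Theta$ terms cancel and leave the remainder $\tau_{ij}^c \Theta_{ck}^l$, then substitute $\Theta_{jk}^l = g^{lq}\tau_{kqj}$ from Corollary \ref{cor:Thetaggtau} and Theorem \ref{thm:Nid} and lower the index. Your caution about $\Theta$ not being skew in its lower indices is well placed and is exactly the bookkeeping the paper's computation carries out.
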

%
\begin{comment}
\begin{cor}\label{cor:Rmtypelastind}
We have that
%
\begin{align*}
\tfrac{1}{2} \prs{\Rm_{ijkl} - J_k^a J_l^b\Rm_{ijab}} &= \prs{\N_i \tau_{klj}} - \prs{\N_j \tau_{kli}}  +\tau_{klc} \tau_{ij}^c\\
\tfrac{1}{2} \prs{ \Rm_{ijkl} +  J_k^a J_l^b\Rm_{ijab}}&= \Omega_{ijkl} + \tau_{sli}\prs{ g^{sd} \tau_{kdj}} - \tau_{slj}\prs{g^{sd} \tau_{kdi}}.
\end{align*}
%
\end{cor}
\end{comment}
%
\begin{cor}\label{cor:RcOmega}
By tracing through Lemma \ref{lem:RmasOmega},
\begin{align*}
\Rc_{jk} &= \cV_{jk} + g^{il}\prs{\N_i \tau_{klj}} - g^{sd}\tau_{slj} \tau_{kd}^l  +  g^{li}\tau_{ij}^c\tau_{klc}.
\end{align*}
\end{cor}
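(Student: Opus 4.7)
The plan is to contract the identity of Lemma \ref{lem:RmasOmega} with $g^{il}$. On the left this produces $\Rc_{jk}$ by the standard Riemannian trace convention (using the pair symmetry of the Riemann tensor). On the right, $g^{il}\Omega_{ijkl} = \Omega^i_{\ ijk}$, which after relabeling the dummy index is exactly $\cV_{jk}$, per the definition $\cV_{ab} \triangleq \Omega^r_{rab}$.

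The observation that handles the five correction terms is Lemma \ref{lem:tracelesstau}: any $g$-trace of $\tau$ vanishes. Combined with metric compatibility of the Chern connection ($\N g \equiv 0$), this kills two of the terms outright. For the second term on the right of Lemma \ref{lem:RmasOmega}, I would commute $g^{il}$ past $\N_j$ to obtain
\[
-g^{il}\prs{\N_j \tau_{kli}} = -\N_j\prs{g^{il}\tau_{kli}} = 0,
\]
since $g^{il}\tau_{kli}$ is a $(2,3)$-index trace of $\tau$. For the fourth term, the contraction factors as $\prs{g^{il}\tau_{sli}}\,g^{sd}\tau_{kdj}$ and vanishes for the same reason, with no need to commute past a derivative.

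The remaining three terms survive modulo bookkeeping. In particular, in the fifth term $-g^{il}\tau_{slj}\prs{g^{sd}\tau_{kdi}}$ becomes $-g^{sd}\tau_{slj}\tau_{kd}^l$ after raising the last index of $\tau_{kdi}$ via $g^{il}$, and the first and last surviving terms are left essentially unchanged. I do not foresee any genuine obstacle: the whole computation reduces to invoking Lemma \ref{lem:tracelesstau} twice alongside $\N g \equiv 0$, plus routine index relabeling.
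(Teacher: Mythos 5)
Your proposal is correct and is exactly the computation the paper intends: contract Lemma \ref{lem:RmasOmega} with $g^{il}$ (so that $g^{il}\Rm_{ijkl}=\Rc_{jk}$ and $g^{il}\Omega_{ijkl}=\cV_{jk}$), kill the two terms containing a trace of $\tau$ over its last two indices via Lemma \ref{lem:tracelesstau} together with $\N g\equiv 0$, and keep the remaining three after raising an index. Aside from a harmless mislabeling of which term in the lemma you are treating (the vanishing derivative term is the third, not the second) and the unneeded appeal to pair symmetry on the left-hand side, this matches the paper's argument.
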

\begin{lemma}\label{lem:Omegatraced}
We have that
\begin{equation*}\label{eq:Omegatraced}
\cV_{jk} = \tfrac{1}{2} J_k^a \cP_{ja}-g^{ur}\prs{\N_r \tau_{ukj}} - \tau_{je}^s \tau_{ks}^e .
\end{equation*}
\begin{proof}
We manipulate
\begin{align*}\label{eq:RcP1}
\cV_{jk} = g^{re} \Omega_{rjke} &= J_k^a J_e^b g^{re} \Omega_{rjab} &\mbox{Lemma \ref{lem:Cherncurvsym}}\\
&= J_k^a \gw^{br} \Omega_{rjab}\\
&=- J_k^a \gw^{br} \prs{ \Omega_{jarb} + \Omega_{arjb} }\\
&\hsp + J_k^a \gw^{br} \prs{ \N_j \tau_{arb} + \N_a \tau_{rjb} + \N_r \tau_{jab} }\\
&\hsp -J_k^a \gw^{br} g_{bs} \prs{\tau_{re}^s \tau_{ja}^e + \tau_{je}^s \tau_{ar}^e  + \tau_{ae}^s \tau_{rj}^e } &\mbox{Theorem \ref{thm:CBianchi}}\\
&= J_k^a \cP_{ja} - J_k^a \gw^{br}  \Omega_{arjb} + J_k^a \gw^{br} \prs{\N_r \tau_{jab} } \\
&\hsp + J_k^a J^r_s \prs{ \tau_{je}^s \tau_{ar}^e  + \tau_{ae}^s \tau_{rj}^e } \\
&=   J_k^a \cP_{ja} - J_k^a \gw^{br}  \Omega_{arjb} - \prs{\N_r \tau_{jk}^r }- 2\tau_{je}^s \tau_{ks}^e . &\mbox{Corollary \ref{cor:tausymmetries}} 
\end{align*}
Note that for the second term we have that
\begin{align*}
- J_k^a \gw^{br}  \Omega_{arjb} &= - J_k^a \gw^{br}  T_{arjb}- J_k^a \gw^{br}  \Omega_{jbar} &\mbox{Lemma \ref{lem:Chernsymm}} \\
\hsp &=  - J_k^a \gw^{br}  T_{arjb}- J_k^a \gw^{br}  \Omega_{rjab}.
\end{align*}
Now we have that
\begin{equation}\label{eq:gOJP}
\cV_{jk} = \tfrac{1}{2} J_k^a \cP_{ja} - \tfrac{1}{2} J_k^a \gw^{br} T_{arjb} - \tfrac{1}{2} \prs{\N_r \tau_{jk}^r }- \tau_{je}^s \tau_{ks}^e .
\end{equation}
Now we compute out
\begin{align*}
\gw^{br}T_{arjb}&= \gw^{br} \prs{ \N_b \tau_{raj} + \N_b \tau_{rja} + \N_j \tau_{arb}-\N_a \tau_{jbr} }\\
&\hsp -\gw^{br}  \prs{ g_{am}\tau_{js}^m\tau_{br}^s+  g_{jm} \tau_{rs}^m \tau_{ab}^s  + g_{rm} \tau_{bs}^m \tau_{ja}^s + g_{bm} \tau_{as}^m \tau_{rj}^s}\\
&\hsp  -\gw^{br}   g_{sm} \prs{\tau_{aj}^s\tau_{br}^m+  \tau_{ab}^s \tau_{jr}^m} &\mbox{Lemma \ref{lem:Chernsymm}}\\
%%%%
&=\gw^{br} \prs{ \N_b \tau_{raj} - \N_b \tau_{jra} }-\gw^{br}  \prs{ g_{jm} \tau_{rs}^m \tau_{ab}^s  + g_{bm} \tau_{as}^m \tau_{rj}^s}\\
&\hsp  -\gw^{br}   g_{sm} \tau_{ab}^s \tau_{jr}^m. &\mbox{Proposition \ref{lem:tracelesstau}}
 \end{align*}
We first simplify the lower order terms.
\begin{align*}
\brk{\gw^{br} T_{arjb} }_{\tau \ast \tau} &= -\gw^{br} \prs{ g_{jm} \tau_{rs}^m \tau_{ab}^s  + g_{bm} \tau_{as}^m \tau_{rj}^s +  g_{sm} \tau_{ab}^s \tau_{jr}^m}\\
&= -\gw^{br} \prs{ \tau_{rsj} \tau_{ab}^s  + \tau_{asb} \tau_{rj}^s + \tau_{abm} \tau_{jr}^m}\\
&= -\gw^{br} \prs{ - \tau_{sjr} \tau_{ab}^s - \tau_{jrs} \tau_{ab}^s  + \tau_{asb} \tau_{rj}^s + \tau_{abm} \tau_{jr}^m} &\mbox{Proposition \ref{prop:taucyclic}}\\
&= -\gw^{br} \prs{ - \tau_{sjr} \tau_{ab}^s   + \tau_{asb} \tau_{rj}^s }\\
&=\gw^{br} \tau_{sjr} \tau_{ab}^s   -\gw^{br} g^{sc}\tau_{asb} \tau_{rjc}\\
&=\gw^{br} \tau_{sjr} \tau_{ab}^s   + J_r^y\gw^{br}J_c^z g^{sc}\tau_{asb} \tau_{yjz} &\mbox{Corollary \ref{cor:tausymmetries}} \\
&=\gw^{br} \tau_{sjr} \tau_{ab}^s   +\gw^{zs}\tau_{as}^y \tau_{yjz}\\
&=\gw^{br} \tau_{sjr} \tau_{ab}^s   +\gw^{rb}\tau_{ab}^s \tau_{sjr}. &\mbox{(relabel)}
\end{align*}
Thus it follows that
\begin{align*}
-  J_k^a\gw^{br}&T_{arjb}  -  \prs{\N_r \tau_{jk}^r } \\&= - J_k^a \gw^{br} \prs{ \N_b \tau_{raj} + \N_b \tau_{rja} }- \prs{\N_r \tau_{jk}^r } \\
&= - J_k^a \gw^{br} \prs{\N_b \tau_{raj}} - J_k^a \gw^{br} \prs{\N_b \tau_{rja}} - g^{ru} \prs{\N_r \tau_{jku}}\\
&=  g^{ur}\prs{\N_r \tau_{kuj}} -g^{ur} \prs{\N_r \tau_{ujk}} - g^{ru} \prs{\N_r \tau_{jku}} &\mbox{Corollary \ref{cor:tausymmetries}} \\
&=  -2g^{ur}\prs{\N_r \tau_{ukj}}.
\end{align*}
Inserting this into \eqref{eq:gOJP} yields the result.
\end{proof}
\end{lemma}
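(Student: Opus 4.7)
The plan is to trace the Chern curvature and convert between its various symmetries, generating the Chern--Ricci term $J_k^a \cP_{ja}$ along with torsion corrections that assemble into the stated form. The natural starting point is $\cV_{jk} = g^{re}\Omega_{rjke}$, and the first move is to use the $J$-invariance in the last two indices from Lemma \ref{lem:Cherncurvsym} to rewrite $\Omega_{rjke} = J_k^a J_e^b \Omega_{rjab}$. Contracting with $g^{re}$ then converts one factor of $g$ into $\gw$ via the identities in \eqref{eq:identitylist}, producing $\cV_{jk} = J_k^a \gw^{br} \Omega_{rjab}$. This reformulation is what eventually allows a $\cP$ to emerge from the trace.

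Next I would apply the first Bianchi identity from Theorem \ref{thm:CBianchi} to $\Omega_{rjab}$ in the form that cycles through the indices $r,j,a$. The cyclic sum produces a $\Omega_{jarb}$ term whose $\gw^{br}$ contraction on $(r,b)$ yields $\cP_{ja}$, plus an $\Omega_{arjb}$ remainder, plus the usual cyclic package of $\nabla\tau$ and $\tau\ast\tau$ terms. The $\Omega_{arjb}$ remainder is then handled via Lemma \ref{lem:Chernsymm}, trading it for $\Omega_{jbar}$ plus the tensor $T_{arjb}$; relabeling and contracting shows that the $\Omega_{jbar}$ contribution just duplicates the original $\cV_{jk}$, so moving it to the left hand side yields a factor of $\tfrac{1}{2}$ on both $J_k^a \cP_{ja}$ and the $T$-term. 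This is the key structural step that forces the coefficient $\tfrac{1}{2}$ in the final formula.

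The remaining work is to reduce the lower-order quantities, namely $\gw^{br} T_{arjb}$ and the cyclic $\nabla\tau$ and $\tau\ast\tau$ residues, using the type/antisymmetry information at hand. Here Proposition \ref{prop:taucyclic} (the cyclic Bianchi for $\tau$), Corollary \ref{cor:tausymmetries} (type $(2,0+0,2)$ in paired indices), and Lemma \ref{lem:tracelesstau} (tracelessness of $\tau$) are used repeatedly to collapse sums of three $\nabla\tau$ terms into a single clean $g^{ur}(\nabla_r\tau_{ukj})$, and to reduce the $\tau\ast\tau$ terms to the single bilinear $\tau_{je}^s \tau_{ks}^e$. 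Swapping $g$-traces for $\gw$-traces via \eqref{eq:identitylist} combined with Corollary \ref{cor:tausymmetries} will be essential for matching indices when the quadratic torsion terms are combined.

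The main obstacle is purely bookkeeping: the Bianchi identity in Theorem \ref{thm:CBianchi} together with Lemma \ref{lem:Chernsymm} produces a sizeable inventory of $\nabla\tau$ and $\tau\ast\tau$ terms with a mix of $g$- and $\gw$-contractions, and one must systematically convert them all to a common form and cancel using the cyclic identity before the announced simplification becomes visible. Once the organization is chosen (consistently pushing everything into $g$-contractions with the torsion in lowered-index form), the algebraic collapses are forced by Proposition \ref{prop:taucyclic} and the tracelessness in Lemma \ref{lem:tracelesstau}, and the stated identity follows.
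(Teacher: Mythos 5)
Your plan follows the paper's own proof essentially step for step: the same starting contraction $\cV_{jk}=J_k^a\gw^{br}\Omega_{rjab}$, the same application of the first Bianchi identity to extract $J_k^a\cP_{ja}$, the same use of Lemma \ref{lem:Chernsymm} to trade the leftover $\Omega_{arjb}$ for $T_{arjb}$ plus a duplicate of $\cV_{jk}$ that is absorbed to produce the factor $\tfrac{1}{2}$, and the same reduction of the residual $\N\tau$ and $\tau\ast\tau$ terms via Proposition \ref{prop:taucyclic}, Corollary \ref{cor:tausymmetries}, and Lemma \ref{lem:tracelesstau}. The remaining work is exactly the index bookkeeping the paper carries out, and your outline is correct.
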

\begin{cor}\label{cor:Rcid}
We have that
\begin{align*}
\Rc_{jk} 
&=\tfrac{1}{2} J_k^a \cP_{ja}  -2g^{ur}\prs{\N_r \tau_{ukj}}-2 \tau_{je}^s \tau_{ks}^e.
\end{align*}
\begin{proof}
Inserting the result of Lemma \ref{lem:Omegatraced} into Corollary \ref{cor:RcOmega} yields
\begin{align*}
\Rc_{jk} 
&=\tfrac{1}{2} J_k^a \cP_{ja}-2g^{ur}\prs{\N_r \tau_{ukj}}- \tau_{je}^s \tau_{ks}^e  - g_{jp} g^{sd} \tau_{se}^p \tau^e_{kd}  - g^{mn} g_{cp} \tau_{jm}^c \tau_{kn}^p.
\end{align*}
We perform one more manipulation to our lower order terms,
\begin{align*}
- g_{jp} g^{sd} \tau_{se}^p \tau_{kd}^e &=-  g^{sd} \tau_{sej} \tau_{kd}^e \\
 &=  g^{sd} \prs{ \tau_{ejs} +  \tau_{jse} } \tau_{kd}^e. &\mbox{Proposition \ref{prop:taucyclic}}
\end{align*}
Inserting this in yields the result.
\end{proof}
\end{cor}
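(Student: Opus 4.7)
The plan is to substitute the expression for $\cV_{jk}$ from Lemma \ref{lem:Omegatraced} directly into Corollary \ref{cor:RcOmega} and then collect terms using Corollary \ref{cor:tausymmetries} and Proposition \ref{prop:taucyclic}. After substitution, the right-hand side consists of one $\cP$ term, two Chern-derivative terms of $\tau$, and three quadratic $\tau \ast \tau$ contractions, so the task reduces to matching each grouping against the claimed answer.

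The derivative terms are straightforward to combine: Corollary \ref{cor:RcOmega} contributes $g^{il}(\N_i \tau_{klj})$ while Lemma \ref{lem:Omegatraced} contributes $-g^{ur}(\N_r \tau_{ukj})$. Applying the skew-symmetry $\tau_{kuj} = -\tau_{ukj}$ from Corollary \ref{cor:tausymmetries} to the first and then relabeling dummies, both expressions coincide and produce the desired $-2 g^{ur}(\N_r \tau_{ukj})$.

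For the quadratic piece I would follow the manipulation foreshadowed by Lemma \ref{lem:Omegatraced}: rewrite $-g^{sd}\tau_{slj}\tau_{kd}^l$ by applying Proposition \ref{prop:taucyclic} in the form $\tau_{slj} = -\tau_{ljs} - \tau_{jsl}$, yielding $g^{sd}(\tau_{ljs} + \tau_{jsl})\tau_{kd}^l$. A short check using Corollary \ref{cor:tausymmetries} to reorder indices shows that one of the resulting pieces is precisely $-\tau_{je}^s \tau_{ks}^e$, reinforcing the explicit such term already coming from Lemma \ref{lem:Omegatraced} and producing the coefficient $-2$. The other piece pairs with the leftover term $g^{li}\tau_{ij}^c \tau_{klc}$ from Corollary \ref{cor:RcOmega}, and after again relabeling dummies and applying skew-symmetry of $\tau$ in its first two indices, the two cancel exactly. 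Assembling all contributions produces the claimed identity.

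The main obstacle is purely the bookkeeping: three quadratic torsion contractions with superficially different index patterns must be reorganized into a single multiple of $\tau_{je}^s \tau_{ks}^e$, and this requires systematically juggling both the skew-symmetry $\tau_{\mathbf{ijk}} = -\tau_{\mathbf{ji}\mathbf{k}}$ and the cyclic identity. Once this is organized cleanly, no ingredients beyond those already used in the proof of Lemma \ref{lem:Omegatraced} are needed.
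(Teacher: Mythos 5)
Your proposal is correct and follows essentially the same route as the paper: substitute Lemma \ref{lem:Omegatraced} into Corollary \ref{cor:RcOmega}, combine the two $\N\tau$ terms via the skew-symmetry $\tau_{\mathbf{ijk}}=-\tau_{\mathbf{jik}}$, and then use Proposition \ref{prop:taucyclic} to split $-g^{sd}\tau_{slj}\tau_{kd}^l$ into a second copy of $-\tau_{je}^s\tau_{ks}^e$ plus a piece cancelling $g^{li}\tau_{ij}^c\tau_{klc}$. The index bookkeeping you outline checks out, so no changes are needed.
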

\begin{cor}\label{cor:scalcurv} We have that $\cR = \tfrac{1}{2} \varrho -  \brs{ \tau }^2$.
\begin{proof}
Trace Corollary \ref{cor:Rcid} and apply Corollary \ref{cor:Btraces} to the last term.
\end{proof}
\end{cor}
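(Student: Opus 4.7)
The plan is to take the Riemannian trace of the identity in Corollary \ref{cor:Rcid} by contracting against $g^{jk}$; the left-hand side becomes $\cR = g^{jk}\Rc_{jk}$ by definition, and the right-hand side splits into three contributions coming from the Chern--Ricci term, the Chern-divergence of $\tau$, and the quadratic-in-$\tau$ term. I would simplify each of these three pieces separately.

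For the Chern--Ricci piece $\tfrac{1}{2}g^{jk}J_k^a\cP_{ja}$, I would combine the type-conversion identity $g^{jk}J_k^a = \gw^{aj}$ from the list in \eqref{eq:identitylist} with the definition $\varrho = \gw^{ba}\cP_{ab}$; after relabeling dummies this contribution collapses to $\tfrac{1}{2}\varrho$. For the divergence piece $-2g^{jk}g^{ur}\N_r\tau_{ukj}$, I would push the $g^{jk}$ through the Chern covariant derivative (legal since $\N g = 0$) to obtain $-2g^{ur}\N_r(g^{jk}\tau_{ukj})$, and then recognize $g^{jk}\tau_{ukj}$ as a metric trace of $\tau$ over its second and third indices; Lemma \ref{lem:tracelesstau} then asserts this inner contraction vanishes identically, so the whole piece drops out.

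The substantive step is the quadratic torsion piece $-2g^{jk}\tau_{je}^s\tau_{ks}^e$, and this is where I would invoke Corollary \ref{cor:Btraces}. A careful match of indices between the present contraction $\tau_{je}^s\tau_{ks}^e$ and the formula $\cB_{ij} = -2\tau_{ik}^w\tau_{jw}^k$ of Lemma \ref{lem:Bformulas} shows, under the renaming $(i,k,w,j)\leftrightarrow(j,e,s,k)$, that $\tau_{je}^s\tau_{ks}^e = -\tfrac{1}{2}\cB_{jk}$, so the piece equals $g^{jk}\cB_{jk}$. Corollary \ref{cor:Btraces} then evaluates this trace in terms of $|\tau|^2$, producing exactly the $-|\tau|^2$ contribution needed for $\cR = \tfrac{1}{2}\varrho - |\tau|^2$.

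The main obstacle is purely bookkeeping: one has to be careful about the positions at which indices of $\tau$ are raised when moving between Corollary \ref{cor:Rcid}, Lemma \ref{lem:Bformulas}, and Corollary \ref{cor:Btraces}, since the same tensor admits several equivalent presentations via Proposition \ref{prop:taucyclic} and the pairwise $(2,0)+(0,2)$ symmetries in Corollary \ref{cor:tausymmetries}. Once the alignment is checked, the three evaluated pieces combine to give the result in a single line.
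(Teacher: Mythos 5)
Your argument is correct and follows the paper's proof exactly: trace Corollary \ref{cor:Rcid}, convert $\tfrac{1}{2}g^{jk}J_k^a\cP_{ja}$ to $\tfrac{1}{2}\varrho$ via \eqref{eq:identitylist}, kill the divergence term with $\N g=0$ and Lemma \ref{lem:tracelesstau}, and evaluate the quadratic torsion term through $\cB$ and Corollary \ref{cor:Btraces}. One caution on the step you left as an assertion: the value you actually need is $g^{jk}\cB_{jk}=-\brs{\tau}^2$, which follows from the first two equalities of Corollary \ref{cor:Btraces} combined with $\cB=\tfrac{1}{4}\cB^1-\tfrac{1}{2}\cB^2$ (equivalently, from Proposition \ref{prop:taucyclic} and Corollary \ref{cor:tausymmetries}, which give $g^{jk}\tau_{je}^s\tau_{ks}^e=\tfrac{1}{2}\brs{\tau}^2$); if you instead quote the final printed equality of Corollary \ref{cor:Btraces}, $\brs{\tau}^2=-\tfrac{1}{2}g^{ij}\cB_{ij}$, verbatim, your third piece becomes $-2\brs{\tau}^2$ and you land on the wrong constant, since that equality is off by a factor of $2$ relative to the other two. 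So the route is sound, but verify the trace of $\cB$ directly rather than citing it, as the entire content of this corollary is precisely that constant.
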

%
%\begin{cor}\label{eq:scrRexp}
%We have that
%%
%\begin{align*}
%\mathcal{R}_{jl} 
%&= \gw^{rk} \prs{\N_r \tau_{kjl} + \N_r \tau_{klj} }.
%\end{align*}
%%
%\begin{proof}
%This follows immediately by inserting Corollary \ref{cor:Rc2002} for a formula for the $(2,0+0,2)$ part of $\Rc$ into the definition of $\mathcal{R}$, \eqref{eq:mathcalR}.
%\end{proof}
%\end{cor}
%
\begin{cor}\label{cor:KI} We have that
\begin{equation*}
\cB_{ij} = \Rc_{ij}^{1,1} - \tfrac{1}{2} J_i^s \cP^{1,1}_{js}.
\end{equation*}
\begin{proof} This follows by projecting Corollary \ref{cor:Rcid} onto the $(1,1)$ part.
%We simply compute:
%%
%\begin{align*}
%2 \prs{\Rc_{jk}}^{1,1}
%&=\tfrac{1}{2} J_k^e \cP_{je}  -2g^{ur}\prs{\N_r \tau_{ukj}}-2 \tau_{je}^s \tau_{ks}^e \\
%&\hsp +J_j^a J_k^b \prs{ \tfrac{1}{2} J_b^e \cP_{ae}  -2g^{ur}\prs{\N_r \tau_{uba}}-2 \tau_{ae}^s \tau_{bs}^e}&\mbox{Corollary \ref{cor:Rcid}}\\
%%&=\tfrac{1}{2} J_k^e \cP_{je}  -2g^{ur}\prs{\N_r \tau_{ukj}}-2 \tau_{je}^s \tau_{ks}^e \\
%%&\hsp + \prs{- \tfrac{1}{2}J_j^a \cP_{ak}  +2g^{ur} \prs{\N_r \tau_{ukj}}-2 J_j^a J_k^b \tau_{ae}^s \tau_{bs}^e} &\mbox{simplify}\\
%&=\tfrac{1}{2} \prs{J_k^e \cP_{je}+J_j^a \cP_{ka} }-2 \tau_{je}^s \tau_{ks}^e -2 \prs{J_e^a\tau_{ja}^s} \prs{-J_w^e\tau_{ks}^w}&\mbox{simplify} \\
%&=\tfrac{1}{2}J_k^e \prs{ \cP_{je}-J_j^a J_e^u \cP_{ua} }-4 \tau_{je}^s \tau_{ks}^e &\mbox{Lemma \ref{cor:tausymmetries}}\\
%&=\tfrac{1}{2}J_k^e \prs{ \cP_{je} + J_j^a J_e^u \cP_{au} }-4 \tau_{je}^s \tau_{ks}^e &\mbox{$\cP$ skew}\\
%&=J_k^e  \cP_{je}^{1,1} + 2 \cB_{jk}, &\mbox{Lemma \ref{lem:Bformulas}}%-4 \tau_{je}^s \tau_{ks}^e,
%\end{align*}
%as desired.
\end{proof}
\end{cor}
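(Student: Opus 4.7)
The plan is to apply the type $(1,1)$ projection (with respect to the free indices $(i,j)$) directly to the identity of Corollary \ref{cor:Rcid}, which after relabeling reads
\[
\Rc_{ij} = \tfrac{1}{2} J_j^a \cP_{ia} - 2g^{ur}\N_r \tau_{uji} - 2\tau_{ie}^s \tau_{js}^e.
\]
The last two terms are easy to dispatch. The quadratic torsion term equals $\cB_{ij}$ by Lemma \ref{lem:Bformulas}, and is already of type $(1,1)$ in $(i,j)$ by Corollary \ref{cor:tautauid} (it is precisely one of the enumerated $g^{ab}g^{cd}\tau_{i\cdot\cdot}\tau_{j\cdot\cdot}$ contractions). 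The covariant derivative term $-2g^{ur}\N_r\tau_{uji}$ is pure $(2,0+0,2)$ in $(i,j)$, since $\tau$ is $(2,0+0,2)$ in the index pair $(j,i)$ by Corollary \ref{cor:tausymmetries} and the Chern connection preserves types via $\N J \equiv 0$; hence this term is annihilated by the $(1,1)$ projection.

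What remains is to analyze the curvature term $\tfrac{1}{2}J_j^a\cP_{ia}$. A short expansion using $J^2 = -\Id$ gives
\[
\bigl(\tfrac{1}{2}J_j^a\cP_{ia}\bigr)^{1,1} = \tfrac{1}{4}\bigl(J_j^a\cP_{ia} - J_i^p\cP_{pj}\bigr).
\]
On the other hand, expanding $\tfrac{1}{2}J_i^s\cP^{1,1}_{js}$ and again invoking $J^2 = -\Id$ together with the skew-symmetry $\cP_{ab} = -\cP_{ba}$ (a consequence of Lemma \ref{lem:Cherncurvsym}) produces the identical expression. Combining these observations with the $(1,1)$ projection of Corollary \ref{cor:Rcid} and rearranging yields the claim.

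The main bookkeeping obstacle is precisely this last identification: the $(1,1)$ projection on the free indices $(i,j)$ of the product $J_j^a\cP_{ia}$ must be recognized as equal to $\tfrac{1}{2}J_i^s$ times the $(1,1)$ projection of $\cP$ taken over a \emph{different} index pair, namely $(j,s)$. The equivalence is not manifest from the definitions, but falls out cleanly once one uses $J^2 = -\Id$ to move a $J$ across the contraction and the skew-symmetry of $\cP$ to absorb the resulting sign. After that step, the remainder of the proof is pure type inspection.
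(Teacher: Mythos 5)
Your proposal is correct and follows the paper's own route: project Corollary \ref{cor:Rcid} onto the $(1,1)$ part, with the divergence-of-torsion term dying by type, the quadratic torsion term giving $\cB$ via Lemma \ref{lem:Bformulas} and Corollary \ref{cor:tautauid}, and the curvature term identified using $J^2=-\Id$ and the skewness of $\cP$. The paper simply leaves this bookkeeping implicit, so your write-up is the same argument with the details supplied.
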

\begin{lemma}\label{lem:RcOmega} 
We have that
\begin{align*}
\cV_{ab}^{2,0+0,2}=- g^{mq} \prs{\N_m \tau_{qab}}
\end{align*}
\begin{proof}
With this identity  for $T$ above we also have that
\begin{align*}
\cV_{ab} &= g^{er} \Omega_{rabe} \\
&= g^{er} J_b^p J_e^q \Omega_{rapq}&\mbox{Lemma \ref{lem:Cherncurvsym}} \\
&=g^{er} J_b^p J_e^q \Omega_{pqra} + g^{er} J_b^p J_e^q\prs{T_{rapq}}&\mbox{Lemma \ref{lem:Chernsymm}} \\
&=g^{er} J_b^p J_e^q J_r^w J_a^z \Omega_{pqwz} + g^{er} J_b^p J_e^q\prs{T_{rapq}} &\mbox{Lemma \ref{lem:Cherncurvsym}}  \\
&= g^{wq} J_b^p J_a^z \Omega_{pqwz} + g^{er} J_b^p J_e^q\prs{T_{rapq}}\\%&\mbox{simplify}  \\
&= g^{re} J_a^vJ_b^w  \Omega_{rvwe} + g^{wq} J_b^p J_a^z\prs{T_{pqwz}} + g^{er} J_b^p J_e^q\prs{T_{rapq}}. &\mbox{Lemma \ref{lem:Chernsymm}}\\
&=J_a^vJ_b^w  \cV_{vw} - g^{wq} J_b^p J_a^z\prs{T_{wzpq}} +  \gw^{qw} J_b^p\prs{T_{wapq}}. &\mbox{\eqref{eq:Tsyms}}
\end{align*}
We expand out each contracted version of $T$. First, we have that
\begin{align*}
g^{wq} J_b^p J_a^z T_{wzpq} &= g^{wq} J_b^p J_a^z\prs{ \N_p \tau_{wzq} + \N_w \tau_{qpz} + \N_q \tau_{zwp} + \N_z \tau_{pqw} } \\
&\hsp-g^{wq} J_b^p J_a^z \prs{  \tau_{psz}\tau_{wq}^s + \tau_{wsq} \tau_{zp}^s + \tau_{zsq}\tau_{pw}^s + \tau_{psw} \tau_{qz}^s } &\mbox{\eqref{eq:Tdefn}}\\
%%%%%%
&= g^{wq} J_b^p J_a^z\prs{\N_w \tau_{qpz} + \N_q \tau_{zwp}} \\
&\hsp -J_b^p J_a^z \prs{\tau_{zs}^w\tau_{pw}^s + \tau_{ps}^q \tau_{qz}^s } &\mbox{Lemma \ref{lem:tracelesstau}}\\%/simplify}\\
%%%%%%
&= g^{wq} J_b^p J_a^z\prs{\N_w \tau_{qpz} + \N_w \tau_{zqp}}\\ %&\mbox{$\prs{w,q}$ symmetric}\\
%%%%%%
&= - g^{wq} J_b^p J_a^z\prs{\N_w \tau_{pzq} }&\mbox{Proposition \ref{prop:taucyclic}}\\
%%%%%%
&= g^{wq}\prs{\N_w \tau_{baq} }. &\mbox{Corollary \ref{cor:tausymmetries} }
\end{align*}
Next we compute 
\begin{align*}
\gw^{qw} J_b^p\prs{T_{wapq}} &=\gw^{qw} J_b^p \prs{ \N_p \tau_{waq} + \N_w \tau_{qpa} + \N_q \tau_{awp} + \N_a \tau_{pqw} } \\
&\hsp - \gw^{qw} J_b^p\prs{  \tau_{psa}\tau_{wq}^s + \tau_{wsq} \tau_{ap}^s + \tau_{asq}\tau_{pw}^s + \tau_{psw} \tau_{qa}^s } &\mbox{\eqref{eq:Tdefn}}\\
%%%%%%
 &=\gw^{qw} J_b^p \prs{\N_w \tau_{qpa} + \N_q \tau_{awp} }  - \gw^{qw} J_b^p\prs{\tau_{asq}\tau_{pw}^s + \tau_{psw} \tau_{qa}^s } &\mbox{Lemma \ref{lem:tracelesstau}}\\
  &=\gw^{qw} J_b^p\N_w \prs{\tau_{qpa} - \tau_{aqp} }  - \gw^{qw} J_b^p\prs{\tau_{asq}\tau_{pw}^s + \tau_{psw} \tau_{qa}^s } \\%&\mbox{rearrange}\\
  &=- J_q^e\gw^{qw}J_p^c J_b^p\N_w \prs{\tau_{eca} - \tau_{aec} } \\
&\hsp - \gw^{qw} J_b^p\prs{-J_q^e J_s^m\tau_{ame}\tau_{cw}^n J^s_n J_p^c - J_p^c J_s^m \tau_{cmw} J_q^e J_n^s\tau_{ea}^n } &\mbox{Corollary \ref{cor:tausymmetries}}\\
 &= g^{qw}\N_w \prs{-\tau_{qba} + \tau_{aqb} }. %&\mbox{simplify}
\end{align*}
Adding these together and applying Proposition \ref{prop:taucyclic} yields
\begin{align*}
- g^{wq} J_b^p J_a^z\prs{T_{wzpq}} +  \gw^{qw} J_b^p\prs{T_{wapq}} &= g^{qw}\N_w \prs{-\tau_{baq} -\tau_{qba} + \tau_{aqb} } =- 2 g^{qw}\prs{\N_w \tau_{qab} }.
\end{align*}
The result follows.
\end{proof}
\end{lemma}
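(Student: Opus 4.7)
The plan is to extract the $(2,0+0,2)$-part of $\cV_{ab}$ directly by exploiting the Chern curvature's second-pair $J$-symmetry (Lemma \ref{lem:Cherncurvsym}) together with the ``swap-the-pairs'' identity of Lemma \ref{lem:Chernsymm}, rather than starting from the Chern--Ricci expression of Lemma \ref{lem:Omegatraced} (which would force us to track the type decomposition of $\cP$, something not yet at our disposal). Starting from $\cV_{ab}=g^{er}\Omega_{rabe}$, I would apply Lemma \ref{lem:Cherncurvsym} to obtain $\cV_{ab}=g^{er}J_b^p J_e^q\Omega_{rapq}$, then swap pairs via Lemma \ref{lem:Chernsymm} at the cost of a $T$-tensor, then apply Lemma \ref{lem:Cherncurvsym} again to the resulting $\Omega_{pqra}$. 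Collapsing the $J\cdot g$ contractions through \eqref{eq:identitylist} and performing a second swap via Lemma \ref{lem:Chernsymm} should produce
\begin{align*}
\cV_{ab} \;=\; J_a^v J_b^w\,\cV_{vw} \;-\; g^{wq} J_a^z J_b^p\,T_{wzpq} \;+\; \gw^{qw} J_b^p\,T_{wapq},
\end{align*}
so that the $(2,0+0,2)$-projection reads
\begin{align*}
2\cV_{ab}^{2,0+0,2} \;=\; \cV_{ab} - J_a^v J_b^w \cV_{vw} \;=\; -\,g^{wq} J_a^z J_b^p\,T_{wzpq} \;+\; \gw^{qw} J_b^p\,T_{wapq}.
\end{align*}

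Once this form is in hand, the task reduces to expanding the two $T$-contractions via \eqref{eq:Tdefn} and simplifying. Each $T$-tensor contributes four $\N\tau$ terms and four quadratic $\tau\ast\tau$ terms, yielding sixteen summands in total. My simplification strategy is: (i) eliminate any term in which an index of $\tau$ is traced against $g$ or $\gw$ by Lemma \ref{lem:tracelesstau}; (ii) convert stubborn $\gw$-contractions into $g$-contractions by absorbing the $J$ factors into $\tau$ via the $(2,0+0,2)$-in-pairs symmetries of Corollary \ref{cor:tausymmetries}, so that step (i) becomes applicable; and (iii) consolidate the surviving $\N\tau$ terms by repeated applications of the cyclic identity of Proposition \ref{prop:taucyclic}. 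The expected outcome is that every quadratic $\tau\ast\tau$ contribution cancels and the $\N\tau$ terms telescope to $-2g^{qw}(\N_w\tau_{qab})$, whence the stated formula follows.

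The main obstacle is the bookkeeping of the sixteen $T$-terms and, in particular, correctly interconverting $g$- and $\gw$-contractions through the index-raising identities of \eqref{eq:identitylist}. The quadratic $\tau\ast\tau$ pieces will only visibly cancel after several layered applications of Proposition \ref{prop:taucyclic} and Corollary \ref{cor:tausymmetries}; likewise, recognizing the $\N\tau$ combinations as instances where the cyclic identity collapses three terms into one requires choosing the right simplification order. The computation is essentially one of finding the right sequence of symmetry-applications so that the accumulated cancellations manifest, rather than one involving genuine new ideas.
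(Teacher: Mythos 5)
Your proposal follows essentially the same route as the paper's own proof: the double application of Lemma \ref{lem:Cherncurvsym} interleaved with Lemma \ref{lem:Chernsymm} yields exactly the intermediate identity $\cV_{ab}=J_a^vJ_b^w\cV_{vw}-g^{wq}J_a^zJ_b^pT_{wzpq}+\gw^{qw}J_b^pT_{wapq}$ that the paper derives, and the subsequent expansion of the $T$-terms via \eqref{eq:Tdefn} with Lemma \ref{lem:tracelesstau}, Corollary \ref{cor:tausymmetries}, and Proposition \ref{prop:taucyclic} produces the same cancellation of the quadratic terms and the same collapse to $-2g^{qw}\prs{\N_w\tau_{qab}}$. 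The approach is correct and matches the paper's argument.
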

\color{black}
\begin{lemma}\label{lem:P2,0+0,2} We have that
\begin{align*}
 \cP^{2,0 + 0,2}_{ab} &= 2 \gw^{mn} \prs{\N_m \tau_{abn}}.
\end{align*}
\begin{proof}
First we manipulate $\cP$.
\begin{align*}
\cP_{ab} &= \gw^{mn} \Omega_{abmn} \\
&=\gw^{mn} \Omega_{mnab} + \gw^{mn} T_{abmn}&\mbox{Lemma \ref{lem:Chernsymm} }\\
&= J_a^v J_b^w\gw^{mn} \Omega_{mnvw} + \gw^{mn} T_{abmn}&\mbox{Lemma \ref{lem:Cherncurvsym}} \\
&= J_a^v J_b^w\gw^{mn} \prs{ \Omega_{vwmn} + T_{mnvw}  }+ \gw^{mn} T_{abmn} &\mbox{Lemma \ref{lem:Chernsymm}} \\
&=  J_a^v J_b^w \cP_{vw} +  J_a^v J_b^w\gw^{mn} T_{mnvw} + \gw^{mn} T_{abmn}. %&\mbox{simplify}
\end{align*}
For the first $T$ type term, we apply \eqref{eq:Tdefn}, the formula for $T$, and simplify.
\begin{align*}
J_a^v J_b^w \gw^{mn}  T_{mnvw} &= J_a^v J_b^w \gw^{mn} \prs{ \N_v \tau_{mnw} + \N_m \tau_{wvn} + \N_w \tau_{nmv} + \N_n \tau_{vwm} } \\
&\hsp- J_a^v J_b^w \gw^{mn} \prs{  \tau_{vsn}\tau_{mw}^s + \tau_{msw} \tau_{nv}^s + \tau_{nsw}\tau_{vm}^s + \tau_{vsm} \tau_{wn}^s } &\mbox{\eqref{eq:Tdefn}}\\
&=J_a^v J_b^w \gw^{mn} \prs{\N_m \tau_{wvn}+ \N_n \tau_{vwm} } &\mbox{Lemma \ref{lem:tracelesstau}} \\
&\hsp- J_a^v J_b^w \gw^{mn} \prs{  \tau_{vsn}\tau_{mw}^s + \tau_{msw} \tau_{nv}^s - \tau_{msw}\tau_{vn}^s - \tau_{vsn} \tau_{wm}^s }\\ %&\mbox{$(m,n)$ skew} \\
&=2 J_a^v J_b^w \gw^{mn} \prs{\N_m \tau_{wvn} } - 2 J_a^v J_b^w \gw^{mn} \prs{  \tau_{vsn}\tau_{mw}^s + \tau_{msw} \tau_{nv}^s}\\%  &\mbox{simplify} \\
&= - 2 \gw^{mn} \prs{\N_m \tau_{ban} } - 2 J_n^v J_m^w\gw^{mn} 
\prs{  \tau_{asv}\tau_{wb}^s + \tau_{wsb} \tau_{va}^s }  &\mbox{Corollary \ref{cor:tausymmetries}}\\
&=- 2 \gw^{mn} \prs{\N_m \tau_{ban} } + 2 \gw^{vw} \prs{  \tau_{asv}\tau_{wb}^s + \tau_{wsb} \tau_{va}^s }. &\mbox{\eqref{eq:identitylist}}
\end{align*}
For the second term, we have
\begin{align*}
\gw^{mn} T_{abmn} &= \gw^{mn}\prs{ \N_m \tau_{abn} + \N_a \tau_{nmb} + \N_n \tau_{bam} + \N_b \tau_{mna} } \\
&\hsp- \gw^{mn} \prs{  \tau_{msb}\tau_{an}^s + \tau_{asn} \tau_{bm}^s + \tau_{bsn}\tau_{ma}^s + \tau_{msa} \tau_{nb}^s } &\mbox{\eqref{eq:Tdefn}}\\
%%%%%%%
&= \gw^{mn}\prs{ \N_m \tau_{abn} + \N_n \tau_{bam}} &\mbox{Lemma \ref{lem:tracelesstau}}  \\
&\hsp- \gw^{wv} \prs{  \tau_{wsb}\tau_{av}^s + \tau_{asv} \tau_{bw}^s - \tau_{bsw}\tau_{va}^s - \tau_{vsa} \tau_{wb}^s } \\ %&\mbox{relabel/skewness}\\
&= 2 \gw^{mn}\prs{ \N_m \tau_{abn}} \\
&\hsp- \gw^{wv} \prs{  \tau_{wsb}\tau_{av}^s + \tau_{asv} \tau_{bw}^s} +\gw^{wv}\prs{ \tau_{bsw}\tau_{va}^s + \tau_{vsa} \tau_{wb}^s }\\%&\mbox{expand out}\\
&= 2 \gw^{mn}\prs{ \N_m \tau_{abn}} - \gw^{wv} \prs{  \tau_{wsb}\tau_{av}^s + \tau_{asv} \tau_{bw}^s}\\
&\hsp +\gw^{wv}\prs{ - \prs{ \tau_{swb} + \tau_{wbs} }\tau_{va}^s - \prs{\tau_{avs} + \tau_{sav} }\tau_{wb}^s } &\mbox{Proposition \ref{prop:taucyclic}}\\
&= 2 \gw^{mn}\prs{ \N_m \tau_{abn}} - \gw^{wv} \prs{  \tau_{wsb}\tau_{av}^s + \tau_{asv} \tau_{bw}^s}\\
&\hsp - \gw^{wv} \tau_{swb}\tau_{va}^s - \gw^{wv}\tau_{wbs}\tau_{va}^s - \gw^{wv}\tau_{avs} \tau_{wb}^s \\
&\hsp - \gw^{wv} \tau_{sav} \tau_{wb}^s \\%&\mbox{expand}\\
&= 2 \gw^{mn}\prs{ \N_m \tau_{abn}} - 2\gw^{vw} \prs{  \tau_{wsb}\tau_{va}^s + \tau_{asv} \tau_{wb}^s}. %&\mbox{simplify}
\end{align*}
Summing together these quantities, we note that the $\tau^{\ast 2}$ terms cancel out.
\end{proof}
\end{lemma}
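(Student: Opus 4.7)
The plan is to leverage the interplay between the Chern curvature symmetries (Lemma~\ref{lem:Cherncurvsym}) and the exchange identity from Lemma~\ref{lem:Chernsymm} to express the $(2,0{+}0,2)$ part of $\cP$ purely in terms of the torsion-adjustment tensor $T$. First, I would start from $\cP_{ab} = \gw^{mn}\Omega_{abmn}$ and swap pair positions via $\Omega_{abmn} = \Omega_{mnab} + T_{abmn}$; then apply $J$-invariance in the first slot pair of $\Omega$ (using Lemma~\ref{lem:Cherncurvsym}) and a second application of Lemma~\ref{lem:Chernsymm} to undo the swap. This produces the identity
\[
\cP_{ab} = J_a^v J_b^w \cP_{vw} + J_a^v J_b^w \gw^{mn} T_{mnvw} + \gw^{mn} T_{abmn},
\]
which immediately gives $\cP^{2,0+0,2}_{ab} = \tfrac{1}{2}\bigl(J_a^v J_b^w \gw^{mn} T_{mnvw} + \gw^{mn} T_{abmn}\bigr)$ via the type decomposition \eqref{eq:typesdecomp}.

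Next, I would expand both $T$-contractions using the explicit formula \eqref{eq:Tdefn}. Each produces four $\N\tau$ terms and four $\tau\ast\tau$ terms. Many $\N\tau$ terms drop out when traced against $\gw^{mn}$ by Lemma~\ref{lem:tracelesstau}, and the surviving derivative terms can be rewritten into a single $\gw^{mn}\N_m\tau_{abn}$ shape by invoking Proposition~\ref{prop:taucyclic} to trade cyclic sums and Corollary~\ref{cor:tausymmetries} to transport the factors of $J$ through pairs of torsion indices (converting $J$-contractions into $\gw$-contractions). The expected outcome from $\gw^{mn}T_{abmn}$ is $2\gw^{mn}\N_m\tau_{abn}$ modulo a quadratic torsion remainder, while $J_a^v J_b^w\gw^{mn}T_{mnvw}$ should yield $-2\gw^{mn}\N_m\tau_{ban}$ (which equals $2\gw^{mn}\N_m\tau_{abn}$ by antisymmetry) modulo its own torsion remainder.

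The main obstacle is verifying that the two quadratic torsion remainders cancel exactly. This will require carefully matching the expressions produced from the two $T$-expansions and showing they agree up to sign after renaming dummy indices; the key tools are again Proposition~\ref{prop:taucyclic} to resolve $\tau_{asv}\tau_{wb}^s$-type products into cyclic rearrangements, and Corollary~\ref{cor:tausymmetries} to commute the pair of $J$'s coming from the $J_a^v J_b^w$-contracted term through the torsion factors and into $\gw$'s. Once the cancellation is established, summing the two $\N\tau$ contributions gives $4\gw^{mn}\N_m\tau_{abn}$, and the factor of $\tfrac{1}{2}$ from the type projection produces the claimed identity $\cP^{2,0+0,2}_{ab} = 2\gw^{mn}\N_m\tau_{abn}$.
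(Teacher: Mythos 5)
Your proposal follows the paper's own argument essentially verbatim: the same double application of Lemma \ref{lem:Chernsymm} sandwiched around Lemma \ref{lem:Cherncurvsym} to obtain $\cP_{ab} = J_a^v J_b^w \cP_{vw} + J_a^v J_b^w \gw^{mn} T_{mnvw} + \gw^{mn} T_{abmn}$, followed by expanding both $T$-contractions via \eqref{eq:Tdefn}, killing traces with Lemma \ref{lem:tracelesstau}, and checking via Proposition \ref{prop:taucyclic} and Corollary \ref{cor:tausymmetries} that the quadratic torsion remainders cancel. The predicted contributions ($2\gw^{mn}\N_m\tau_{abn}$ and $-2\gw^{mn}\N_m\tau_{ban}$ plus cancelling $\tau\ast\tau$ terms) are exactly what the paper's computation yields, so the plan is correct and coincides with the paper's proof.
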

\begin{cor}\label{cor:Rc2002} We have that
\begin{align*}
\Rc_{jk}^{2,0+0,2} = - g^{ur}\N_r \prs{ \tau_{ujk}  + \tau_{ukj}}.
\end{align*}
\begin{proof}
Starting from Corollary \ref{cor:RcOmega}, we compute the $(2,0+0,2)$ projection:
\begin{align*}
\Rc_{jk}^{2,0+0,2} &= \Omega_{jk}^{2,0+0,2} + g^{il}\prs{\N_i \tau_{klj}} &\mbox{Corollaries \ref{cor:tausymmetries}, \ref{cor:tautauid}}\\
&=  g^{il} \prs{\N_i \tau_{jlk}} + g^{il}\prs{\N_i \tau_{klj}}. &\mbox{Lemma \ref{lem:RcOmega} }
\end{align*}
The result follows.
\end{proof}
\end{cor}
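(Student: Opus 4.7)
The plan is to simply project the expression in Corollary \ref{cor:RcOmega} onto its $(2,0+0,2)$ component in the $(jk)$ pair and then invoke Lemma \ref{lem:RcOmega} to replace the traced curvature term. Recall Corollary \ref{cor:RcOmega} gives
\begin{equation*}
\Rc_{jk} = \cV_{jk} + g^{il}\prs{\N_i \tau_{klj}} - g^{sd}\tau_{slj} \tau_{kd}^l + g^{li}\tau_{ij}^c\tau_{klc}.
\end{equation*}
By Corollary \ref{cor:tautauid}, both quadratic Chern torsion contractions on the right are of type $(1,1)$ in $(jk)$, and thus are killed by the $(2,0+0,2)$ projection.

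Next I would observe that the linear term $g^{il}\N_i \tau_{klj}$ is already of pure type $(2,0+0,2)$ in the pair $(jk)$, so that its projection is itself. This is immediate from Corollary \ref{cor:tausymmetries}: $\tau_{klj}$ is of type $(2,0+0,2)$ in the pair $(kj)$, and since $\N J \equiv 0$ (cf. \eqref{eq:Cherncharac}) the Chern covariant derivative preserves the $J$-type. Consequently, after projection,
\begin{equation*}
\Rc_{jk}^{2,0+0,2} = \cV_{jk}^{2,0+0,2} + g^{il}\prs{\N_i \tau_{klj}}.
\end{equation*}

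The last step is to substitute Lemma \ref{lem:RcOmega}, which yields $\cV_{jk}^{2,0+0,2} = - g^{ur}\prs{\N_r \tau_{ujk}}$, and to rewrite the remaining term by relabeling indices $i\to r$, $l\to u$ together with the skew-symmetry $\tau_{kuj} = -\tau_{ukj}$ from Corollary \ref{cor:tausymmetries}:
\begin{equation*}
g^{il}\prs{\N_i \tau_{klj}} = g^{ur}\prs{\N_r \tau_{kuj}} = - g^{ur}\prs{\N_r \tau_{ukj}}.
\end{equation*}
Combining the two pieces gives exactly $\Rc_{jk}^{2,0+0,2} = -g^{ur}\N_r\prs{\tau_{ujk} + \tau_{ukj}}$. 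No step here presents a real obstacle; the only subtle point is verifying the type assertion for the derivative term, which is handled cleanly by $\N J \equiv 0$.
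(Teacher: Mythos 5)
Your proposal is correct and follows essentially the same route as the paper: project Corollary \ref{cor:RcOmega} onto the $(2,0+0,2)$ part, discard the quadratic torsion terms as type $(1,1)$ via Corollary \ref{cor:tautauid}, keep the derivative term (whose type is preserved since $\N J\equiv 0$), and substitute Lemma \ref{lem:RcOmega} for $\cV^{2,0+0,2}$. The only difference is cosmetic index relabeling; your explicit justification of the type of the $\N\tau$ term is a point the paper leaves implicit.
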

\section{Variation formulas}\label{s:variationformulas}
We now examine general variation formulas of canonical objects in the almost K\"{a}hler setting: the Chern scalar curvature and the norm squared of the Chern torsion. We note it is substantially easier to consider scalar quantities, as much of the cumbersome terms are removed via the orthogonality of types.
\subsection{Torsion variation}
Here we compute the variation of the torsion tensor and the resultant variation of the norm squared of the torsion.
\begin{lemma}\label{lem:tauevolution}
We have that
\begin{align}
\begin{split}\label{eq:tauvariation}
 \dot{\tau}_{jk}^i 
&=  \tfrac{1}{4}  J_{j}^p \prs{\N_p \dot{J}_{k}^{i} - \N_k \dot{J}_p^i } -\tfrac{1}{4} J_k^p \prs{\N_p \dot{J}_j^i-\N_j \dot{J}_p^i}  + \tfrac{1}{2} \dot{J}^d_p \prs{ J_{k}^p  \tau_{jd}^i - J_j^p \tau_{kd}^i +  J_d^i \tau_{jk}^p }.
 \end{split}
\end{align}
\begin{proof}
To determine this we compute the variation of $N$, the Nijenhuis tensor, using formula \eqref{eq:Nijen}, and convert this to be in terms of the Chern connection
\begin{align*}
4 \dot{\tau}_{jk}^i &=\dot{J}_{j}^p \prs{\del_p J_{k}^{i}} - \dot{J}_{k}^p \prs{\del_p J_{j}^{i}} - \dot{J}_p^{i} \prs{\del_{j} J_{k}^p} + \dot{J}_p^{i} \prs{\del_{k} J_{j}^p} \\
&\hsp + J_{j}^p \prs{\del_p \dot{J}_{k}^{i}} - J_{k}^p \prs{\del_p \dot{J}_{j}^{i}} - J_p^{i} \prs{\del_{j} \dot{J}_{k}^p} + J_p^{i} \prs{\del_{k} \dot{J}_{j}^p} &\mbox{differentiate \eqref{eq:Nijen}}\\
&= \dot{J}_{j}^p \prs{\N_p J_{k}^{i} + \gY_{pk}^d J_d^i - \gY_{pd}^i J_k^d} - \dot{J}_{k}^p \prs{\N_p J_{j}^{i} + \gY_{pj}^d J_d^i - \gY_{pd}^i J_j^d  }\\
&\hsp - \dot{J}_p^{i} \prs{\N_{j} J_{k}^p + \gY_{jk}^d J_d^p - \gY^p_{jd} J^d_k} + \dot{J}_p^{i} \prs{\N_{k} J_{j}^p + \gY_{kj}^d J_d^p -  \gY_{kd}^p J_j^d} \\
&\hsp + J_{j}^p \prs{\N_p \dot{J}_{k}^{i} + \gY_{pk}^d \dot{J}_d^i - \gY_{pd}^i \dot{J}_k^d} - J_{k}^p\prs{\N_p \dot{J}_{j}^{i} + \gY_{pj}^d \dot{J}_d^i - \gY_{pd}^i \dot{J}_j^d  }\\
&\hsp - J_p^{i}  \prs{\N_{j} \dot{J}_{k}^p + \gY_{jk}^d \dot{J}_d^p - \gY^p_{jd} \dot{J}^d_k} + J_p^{i} \prs{\N_{k} \dot{J}_{j}^p + \gY_{kj}^d \dot{J}_d^p -  \gY_{kd}^p \dot{J}_j^d} &\mbox{$\del = \N - \gY$} \\
&= \dot{J}_{j}^p \prs{ \gY_{pk}^d J_d^i - \gY_{pd}^i J_k^d} - \dot{J}_{k}^p \prs{\gY_{pj}^d J_d^i - \gY_{pd}^i J_j^d  }  \\
&\hsp- \dot{J}_p^{i} \prs{ \gY_{jk}^d J_d^p - \gY^p_{jd} J^d_k} + \dot{J}_p^{i} \prs{ \gY_{kj}^d J_d^p -  \gY_{kd}^p J_j^d} \\
&\hsp + J_{j}^p \prs{\N_p \dot{J}_{k}^{i} + \gY_{pk}^d \dot{J}_d^i - \gY_{pd}^i \dot{J}_k^d} - J_{k}^p\prs{\N_p \dot{J}_{j}^{i} + \gY_{pj}^d \dot{J}_d^i - \gY_{pd}^i \dot{J}_j^d  }\\
&\hsp - J_p^{i}  \prs{\N_{j} \dot{J}_{k}^p + \gY_{jk}^d \dot{J}_d^p - \gY^p_{jd} \dot{J}^d_k} + J_p^{i} \prs{\N_{k} \dot{J}_{j}^p + \gY_{kj}^d \dot{J}_d^p -  \gY_{kd}^p \dot{J}_j^d}. &\mbox{\eqref{eq:Cherncharac}}
\end{align*}
The highest order terms simply group up as
\begin{align*}
4 \brk{\dot{\tau}_{jk}^i}_{J \ast \N \dot{J}}&= J_{j}^p \prs{\N_p \dot{J}_{k}^{i}} - J_k^p \prs{\N_p \dot{J}_j^i} - J_p^i \prs{\N \dot{J}_k^p} + J_p^i \prs{\N_k \dot{J}_j^p}.
\end{align*}
For the lower order terms we rearrange and label for easy combination
\begin{align*}
4 \brk{\dot{\tau}_{jk}^i}_{\dot{J} \ast \gY \ast J} &=\brk{\dot{J}_{j}^p\gY_{pk}^d J_d^i }_A \brk{- \dot{J}_{j}^p J_k^d \gY_{pd}^i }_B \brk{- \dot{J}_{k}^p \gY_{pj}^d J_d^i }_C \brk{+ \dot{J}_{k}^pJ_j^d  \gY_{pd}^i}_D\\
&\hsp  \brk{-  \gY_{jk}^d J_d^p \dot{J}_p^{i}}_E \brk{+J^d_k \gY^p_{jd} \dot{J}_p^{i} }_F \brk{+ \gY_{kj}^d J_d^p\dot{J}_p^{i}}_E \brk{  -  J_j^d \gY_{kd}^p \dot{J}_p^{i}}_G \\
&\hsp \brk{+  J_{j}^p \gY_{pk}^d \dot{J}_d^i}_G \brk{ - J_{j}^p\dot{J}_k^d\gY_{pd}^i}_D \brk{- J_{k}^p\gY_{pj}^d \dot{J}_d^i }_F\brk{+ J_{k}^p \dot{J}_j^d \gY_{pd}^i}_B \\
&\hsp \brk{- \gY_{jk}^d \dot{J}_d^p J_p^{i}}_H\brk{ +  \dot{J}^d_k \gY^p_{jd}J_p^{i}}_C \brk{+ \gY_{kj}^d \dot{J}_d^p J_p^{i}}_H \brk{-   \dot{J}_j^d \gY_{kd}^pJ_p^{i}}_A.
\end{align*}
Using this labelling scheme, for each of the following we relabel indices and combine.
\begin{align*}
A& : \dot{J}_j^p \gY_{pk}^d J_d^i - \dot{J}_j^d \gY_{kd}^p J_p^i =\dot{J}_j^d \gY_{dk}^p J_p^i - \dot{J}_j^d \gY_{kd}^p J_p^i = \dot{J}_j^d \tau_{dk}^p J_p^i  \\
B& : J_{k}^p \dot{J}_j^d \gY_{pd}^i  - \dot{J}_{j}^p J_k^d \gY_{pd}^i = \dot{J}_j^p  J_{k}^d \gY_{dp}^i  - \dot{J}_{j}^p J_k^d \gY_{pd}^i = \dot{J}_j^p  J_{k}^d \tau_{dp}^i \\
C&:   \dot{J}^d_k \gY^p_{jd}J_p^{i} - \dot{J}_{k}^p \gY_{pj}^d J_d^i = \dot{J}^p_k \gY^d_{jp}J_d^{i} - \dot{J}_{k}^p \gY_{pj}^d J_d^i = \dot{J}^p_k \tau^d_{jp}J_d^{i} \\
D&: \dot{J}_{k}^pJ_j^d  \gY_{pd}^i - J_{j}^p\dot{J}_k^d\gY_{pd}^i = \dot{J}_{k}^pJ_j^d  \gY_{pd}^i - \dot{J}_k^p J_{j}^d\gY_{dp}^i = \dot{J}_{k}^p J_j^d  \tau_{pd}^i \\
E &: \gY_{kj}^d J_d^p\dot{J}_p^{i} -  \gY_{jk}^d J_d^p \dot{J}_p^{i} =\tau_{kj}^d J_d^p\dot{J}_p^{i}\\
F&: J^d_k \gY^p_{jd} \dot{J}_p^{i}- J_{k}^p\gY_{pj}^d \dot{J}_d^i = J^d_k \gY^p_{jd} \dot{J}_p^{i}- J_{k}^d\gY_{dj}^p \dot{J}_p^i =  J^d_k \tau^p_{jd} \dot{J}_p^{i}\\
G&:   J_{j}^p \gY_{pk}^d \dot{J}_d^i-  J_j^d \gY_{kd}^p \dot{J}_p^{i} =   J_{j}^d \gY_{dk}^p \dot{J}_p^i-  J_j^d \gY_{kd}^p \dot{J}_p^{i}=   J_{j}^d \tau_{dk}^p \dot{J}_p^i\\
H&: \gY_{kj}^d \dot{J}_d^p J_p^{i}- \gY_{jk}^d \dot{J}_d^p J_p^{i} = \tau_{kj}^d \dot{J}_d^p J_p^{i}.
\end{align*}
We further combine our remaining terms
\begin{align*}
A+ B&= \dot{J}_j^d \tau_{dk}^p J_p^i +  \dot{J}_j^p  J_{k}^d \tau_{dp}^i = - \dot{J}_j^d \tau_{dp}^i J_k^p + \dot{J}_j^p  J_{k}^d \tau_{dp}^i =  2 \dot{J}_j^p  J_{k}^d \tau_{dp}^i &\mbox{Corollary \ref{cor:tausymmetries}}\\
C+ D &=  \dot{J}^p_k \tau^d_{jp}J_d^{i} +  \dot{J}_{k}^p J_j^d  \tau_{pd}^i =  -  \dot{J}^p_k \tau^i_{dp}J_j^{d} +  \dot{J}_{k}^p J_j^d  \tau_{pd}^i = 2 \dot{J}_{k}^p J_j^d  \tau_{pd}^i &\mbox{Corollary \ref{cor:tausymmetries}}\\
E + H &=\tau_{kj}^d J_d^p\dot{J}_p^{i} +   \tau_{kj}^d \dot{J}_d^p J_p^{i} = \tau_{kj}^d J_d^p\dot{J}_p^{i} -   \tau_{kj}^d J_d^p \dot{J}_p^{i} =0 &\mbox{ $\sqg{J,\dot{J}} = 0$} \\
F+G&-J^d_k \tau^p_{jd} \dot{J}_p^{i} + J_{j}^d \tau_{dk}^p \dot{J}_p^i = J^d_j \tau^p_{dk} \dot{J}_p^{i} + J_{j}^d \tau_{dk}^p \dot{J}_p^i = 2J_{j}^d \tau_{dk}^p \dot{J}_p^i. &\mbox{Corollary \ref{cor:tausymmetries}}
\end{align*}
We then combine these remaining three terms
\begin{align*}
4 \brk{\dot{\tau}_{jk}^i}_{\dot{J} \ast \gY \ast J} &= 2 \dot{J}_j^p  J_{k}^d \tau_{dp}^i + 2 \dot{J}_{k}^p J_j^d  \tau_{pd}^i + 2J_{j}^d \tau_{dk}^p \dot{J}_p^i \\
&= 2 \dot{J}_j^p  J^d_p \tau_{kd}^i +  2 \dot{J}_{k}^p J_p^d  \tau_{dj}^i - 2\dot{J}_p^i J_{d}^p \tau_{jk}^d &\mbox{Corollary \ref{cor:tausymmetries}} \\
&=  - 2 J_j^p  \dot{J}^d_p \tau_{kd}^i -  2 J_{k}^p \dot{J}_p^d  \tau_{dj}^i + 2J_p^i \dot{J}_{d}^p \tau_{jk}^d  &\mbox{ $\sqg{J,\dot{J}} = 0$} \\
&=2 \dot{J}^d_p \prs{ J_{k}^p  \tau_{jd}^i - J_j^p \tau_{kd}^i  +  J_d^i \tau_{jk}^p },
\end{align*}
combining yields the result.
\end{proof}
\end{lemma}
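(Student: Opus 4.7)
The plan is to differentiate the Nijenhuis tensor formula \eqref{eq:Nijen}, exploit the identification $N = 8\tau$ from Theorem \ref{thm:Nid}, and then convert all partial derivatives into Chern covariant derivatives. Since the product rule applied to the four terms of \eqref{eq:Nijen} produces eight terms (four with the dot on $J$ and four with the dot on $\del J$), I first expect to obtain
\begin{align*}
4 \dot{\tau}_{jk}^i &= \dot{J}_j^p (\del_p J_k^i) - \dot{J}_k^p (\del_p J_j^i) - \dot{J}_p^i (\del_j J_k^p) + \dot{J}_p^i (\del_k J_j^p) \\
&\quad + J_j^p (\del_p \dot{J}_k^i) - J_k^p (\del_p \dot{J}_j^i) - J_p^i (\del_j \dot{J}_k^p) + J_p^i (\del_k \dot{J}_j^p).
\end{align*}
Each $\del$ is then rewritten as $\N - \gY$. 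In the terms where the partial lands on $J$, the $\N J$ contribution vanishes by \eqref{eq:Cherncharac}, leaving only $\gY \ast J$ combinations; in the terms where the partial lands on $\dot{J}$, both the $\N \dot{J}$ and $\gY \ast \dot{J}$ pieces survive. Consequently the expression splits cleanly into a ``highest order'' bundle of $J \ast \N \dot{J}$ terms and a ``lower order'' bundle of sixteen $\gY$-involving terms.

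The highest order piece is immediately $J_j^p \N_p \dot{J}_k^i - J_k^p \N_p \dot{J}_j^i - J_p^i \N_j \dot{J}_k^p + J_p^i \N_k \dot{J}_j^p$, which after applying $\{J,\dot{J}\} = 0$ to the last two terms (i.e., $J_p^i \dot{J}_a^p = - \dot{J}_p^i J_a^p$, equivalently applied to $\N \dot J$) produces precisely the four $\tfrac{1}{4} J \ast \N \dot{J}$ terms stated in \eqref{eq:tauvariation}.

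The main obstacle is organizing the sixteen lower order terms into the three $\tau$-contractions on the right-hand side. My strategy is to partition them into eight pairs indexed $A$ through $H$, each pair consisting of two $\gY$-summands with identical non-connection indices, so that the difference $\gY_{ab}^c - \gY_{ba}^c = \tau_{ab}^c$ emerges from each pair. I expect the pair $E+H$ (the ones carrying $\tau_{kj}^d J_d^p \dot{J}_p^i$ with $J$ and $\dot{J}$ in ``like'' position) to vanish by anticommutativity $\{J,\dot{J}\} = 0$, while the remaining three surviving pairs $A+B$, $C+D$, $F+G$ contribute $2\dot{J}_j^p J_k^d \tau_{dp}^i$, $2\dot{J}_k^p J_j^d \tau_{pd}^i$, and $2J_j^d \tau_{dk}^p \dot{J}_p^i$ respectively. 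Repeated use of Corollary \ref{cor:tausymmetries} (torsion is $(2,0+0,2)$ in its first pair) to shift $J$'s through $\tau$, combined again with $\{J,\dot{J}\} = 0$ to move $J$ from $\dot{J}$ onto $\tau$, should reorganize these three contributions into $2\dot{J}_p^d (J_k^p \tau_{jd}^i - J_j^p \tau_{kd}^i + J_d^i \tau_{jk}^p)$, which after division by $4$ matches the claimed formula.

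The difficulty lies entirely in the bookkeeping of the sixteen-term collation; none of the individual manipulations is sophisticated, but tracking which pair gives which $\tau$-monomial and controlling the signs coming from $\{J,\dot{J}\} = 0$ and Corollary \ref{cor:tausymmetries} demands careful labeling. No curvature identities or Bianchi-type relations are needed—only $\N J = 0$, the definition of $\tau$ as antisymmetrization of $\gY$, torsion index symmetry, and $J \dot{J} + \dot{J} J = 0$.
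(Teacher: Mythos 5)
Your proposal follows essentially the same route as the paper's proof: differentiate the Nijenhuis formula, use $\del = \N - \gY$ together with $\N J \equiv 0$ to split into the $J \ast \N \dot{J}$ block and sixteen lower-order terms, pair those into eight groups whose antisymmetrized $\gY$'s produce $\tau$, observe the $E+H$-type cancellation via $\sqg{J,\dot{J}}=0$, and reassemble the survivors with Corollary \ref{cor:tausymmetries}. The argument is correct as outlined and matches the paper's bookkeeping scheme in all essentials.
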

While the variation of $\tau$ is not so clean, the formula for the variation of $\brs{\tau}^2$ is comparatively simple. We note the appearance below of precisely the form of the evolution of $J$ along symplectic curvature flow featured in Proposition \ref{prop:Jdot}, which hints to the naturality of such evolution.
\begin{lemma}\label{lem:normtauvar}We have that
\begin{align}
\begin{split}\label{eq:|tau|var}
\delta\brs{ \tau }^2 &=\ip{\dot{g} , \cB } + 2 g^{ap} \gw^{bw}\tau_{bae}\prs{\N_p \dot{J}_w^e}  \\
&= \ip{\dot{g} , \cB }-  \tfrac{1}{2} g_{ey}g^{uw} \prs{4 \gw^{pv} \prs{ \N_p\tau_{vu}^y}}\dot{J}_w^e  +2 \N_p \brk{g^{ap}  \gw^{bw}\tau_{bae}  \dot{J}_w^e} .
\end{split}
\end{align}
\begin{proof} Using Corollary \ref{cor:Btraces}, we compute
\begin{align*}
\tfrac{1}{2} \delta \brs{ \tau }^2 &= \delta \brk{g^{ij} \tau_{ie}^w \tau_{jw}^e} = - \dot{g}_{cd} g^{ci} g^{dj} \tau_{ie}^w \tau_{jw}^e +2 g^{ij} \tau_{ie}^w \dot{\tau}_{jw}^e.
\end{align*}
We examine the second term:
\begin{align*}
2g^{ij} \tau_{ie}^w \dot{\tau}_{jw}^e 
&=\tfrac{1}{2}  g^{ij} \tau_{ie}^w J_{j}^p \prs{\N_p \dot{J}_{w}^{e} - \N_w \dot{J}_p^e } -\tfrac{1}{2} g^{ij} \tau_{ie}^w J_w^p \prs{\N_p \dot{J}_j^e-\N_j \dot{J}_p^e}&\mbox{(Row 1)}\\
&\hsp+ g^{ij} \tau_{ie}^w \dot{J}^d_p \prs{ J_{w}^p  \tau_{jd}^e - J_j^p \tau_{wd}^e +  J_d^e \tau_{jw}^p }. &\mbox{(Row 2)}
\end{align*}
For (Row 1) we first simplify the terms hitting the quantities in parantheses.
\begin{equation}\label{eq:randoids}
g^{ij} \tau_{ie}^w J_j^p = \gw^{pi}\tau_{ie}^w \qquad g^{ij} \tau_{ie}^w J_w^p = - g^{ij} \tau_{we}^p J_i^w =- \gw^{wj} \tau_{we}^p.
\end{equation}
Thus, for (Row 1):
\begin{align*}
\text{(Row 1)} &= \tfrac{1}{2} g^{ij} \tau_{ie}^w J_{j}^p \prs{\N_p \dot{J}_{w}^{e} - \N_w \dot{J}_p^e } -\tfrac{1}{2} g^{ij} \tau_{ie}^w J_w^p \prs{\N_p \dot{J}_j^e-\N_j \dot{J}_p^e}\\
&= \tfrac{1}{2}\gw^{pi}\tau_{ie}^w \prs{\N_p \dot{J}_{w}^{e} - \N_w \dot{J}_p^e } + \tfrac{1}{2} \gw^{wj} \tau_{we}^p \prs{\N_p \dot{J}_j^e-\N_j \dot{J}_p^e} &\mbox{\eqref{eq:randoids}}\\
&= \tfrac{1}{2} \gw^{pi}\tau_{ie}^w \prs{\N_p \dot{J}_{w}^{e} - \N_w \dot{J}_p^e } + \tfrac{1}{2}\gw^{ip} \tau_{ie}^w \prs{\N_w \dot{J}_p^e-\N_p \dot{J}_w^e}\\ %&\mbox{relabel}\\
&= \tfrac{1}{2}\gw^{pi}\tau_{ie}^w \prs{\N_p \dot{J}_{w}^{e} - \N_w \dot{J}_p^e } + \tfrac{1}{2}\gw^{pi} \tau_{ie}^w \prs{\N_p \dot{J}_w^e-\N_w \dot{J}_p^e}\\ %&\mbox{reorganize, $\gw$ skew}\\
&= \gw^{pi}\tau_{ie}^w \prs{\N_p \dot{J}_{w}^{e} - \N_w \dot{J}_p^e }. %&\mbox{simplify}
\end{align*}
We manipulate the first term of this quantity further:
\begin{align*}
\gw^{pi} \tau_{ie}^w \prs{\N_p \dot{J}_w^e} &=\gw^{pi}g^{wc} \tau_{iec} \prs{\N_p \dot{J}_w^e} \\
&=\gw^{pi}g^{wc}\prs{-J_i^a J_c^b  \tau_{aeb}} \prs{\N_p \dot{J}_w^e}  &\mbox{Corollary \ref{cor:tausymmetries}} \\
&=- g^{ap} \gw^{bw}\tau_{aeb} \prs{\N_p \dot{J}_w^e} \\
&= g^{ap} \gw^{bw}\prs{\tau_{bae} + \tau_{eba} }\prs{\N_p \dot{J}_w^e}  &\mbox{Proposition \ref{prop:taucyclic}} \\
&= g^{ap} \gw^{bw}\tau_{bae}\prs{\N_p \dot{J}_w^e} + \gw^{bw}\tau_{eb}^p\prs{\N_p \dot{J}_w^e} \\
%&= g^{ap} \gw^{bw}\tau_{bae}\prs{\N_p \dot{J}_w^e} + \gw^{ip}\tau_{ei}^w\prs{\N_w \dot{J}_p^e}  \\%&\mbox{relabel: $b\mapsto i$, $w \leftrightarrow p$}\\
&= g^{ap} \gw^{bw}\tau_{bae}\prs{\N_p \dot{J}_w^e} + \gw^{pi}\tau_{ie}^w\prs{\N_w \dot{J}_p^e}.
\end{align*}
Therefore we have that
\begin{equation*}
\text{(Row 1)} = g^{ap} \gw^{bw}\tau_{bae}\prs{\N_p \dot{J}_w^e}.
\end{equation*}
For (Row  2) we compare the first and last term. Manipulating the last, we get that
\begin{align*}
g^{ij} \tau_{ie}^w \dot{J}_p^d J_d^e \tau_{jw}^p &=  g^{ij} \tau_{iw}^p \dot{J}_p^d J_d^e \tau_{je}^w \\ %&\mbox{reorganize, $(i,j)$ symmetry} \\
&=  -g^{ij} \tau_{iw}^p J_p^d \dot{J}_d^e \tau_{je}^w &\mbox{$\sqg{J,\dot{J}}=0$} \\
&=  -g^{ij} \tau_{ie}^w J_w^p \dot{J}_p^d \tau_{jd}^e.% &\mbox{relabel: $ e \mapsto d$, $d \mapsto p$, $p \mapsto w$, $w \mapsto e$}
\end{align*}
Therefore,
\begin{equation*}
g^{ij} \tau_{ie}^w \dot{\tau}_{jw}^e 
=- \tfrac{1}{2}g^{ij} \tau_{ie}^w \dot{J}^d_p J_j^p \tau_{wd}^e = \tfrac{1}{2}\gw^{ip} \tau_{ie}^w  \tau_{wd}^e \dot{J}^d_p.
\end{equation*}
We argue that additionally the last term vanishes. To see this, we note that $\gw^{-1}$ is type $(1,1)$. Comparatively, the remainder of this term is type $(2,0 + 0,2)$. To see this observe that
\begin{align*}
J_i^a J_p^b\tau_{ae}^w \dot{J}^d_b \tau_{wd}^e &= -J_i^a \dot{J}_p^b\tau_{ae}^w J^d_b \tau_{wd}^e  &\mbox{$\sqg{J,\dot{J}} = 0$}\\
&= J_i^a \dot{J}_p^b\tau_{ae}^w \prs{J^e_d \tau_{wb}^d}  &\mbox{Corollary \ref{cor:tausymmetries}}\\
&=  \dot{J}_p^b \prs{J_i^a \tau_{ae}^w J^e_d} \tau_{wb}^d \\%&\mbox{regroup}\\
&= - \dot{J}_p^b \tau_{id}^w \tau_{wb}^d  &\mbox{Corollary \ref{cor:tausymmetries}}\\
&= - \dot{J}_p^d \tau_{ie}^w \tau_{wd}^e. % &\mbox{relabel}
\end{align*}
Thus $\text{(Row 2)} = 0$, which yields the first line of the \eqref{eq:|tau|var}. The second line follows by shifting the Chern derivative at the expense of a divergence term, then applying Corollary \ref{cor:tausymmetries}.
\end{proof}
\end{lemma}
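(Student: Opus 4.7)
The plan is to compute $\delta\brs{\tau}^2$ by differentiating a suitable scalar contraction, sort the result into a $\dot g$-contribution and a $\dot\tau$-contribution, and then expand the latter using Lemma \ref{lem:tauevolution}.

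First, I would express $\brs{\tau}^2$ via Corollary \ref{cor:Btraces} and Lemma \ref{lem:Bformulas} as a constant multiple of $g^{ij}\tau_{ie}^w\tau_{jw}^e$. Differentiating yields a single $\dot g$-term together with two equal $\dot\tau$-terms (by the symmetry of the contraction). The $\dot g$ contribution immediately gives $\ip{\dot g,\cB}$ upon recognizing the pattern $-2\tau_{ie}^w\tau_{jw}^e=\cB_{ij}$ from Lemma \ref{lem:Bformulas}.

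For the $\dot\tau$ contribution I would substitute Lemma \ref{lem:tauevolution}, which splits $\dot\tau$ into first-order pieces of the shape $J\ast\N\dot J$ and algebraic pieces of the shape $\dot J\ast J\ast\tau$. Pairing the first-order pieces against $g^{ij}\tau_{ie}^w$ and converting products $g\cdot J$ to $\gw$-inverses using the identities \eqref{eq:identitylist}, together with the cyclic Bianchi identity (Proposition \ref{prop:taucyclic}) and the symmetries of $\tau$ (Corollary \ref{cor:tausymmetries}), the four contributions should coalesce into the single expression $2g^{ap}\gw^{bw}\tau_{bae}(\N_p\dot J_w^e)$. For the algebraic pieces, using $\sqg{J,\dot J}=0$ (obtained by differentiating $J^2=-\Id$) together with Corollary \ref{cor:tausymmetries}, two of the three contributions should cancel pairwise, and the remaining one is a contraction of $\gw^{-1}$, which is pure type $(1,1)$, against a tensor that, after a relabeling and application of Corollary \ref{cor:tausymmetries}, turns out to be pure type $(2,0+0,2)$; this contraction vanishes by orthogonality of the type decomposition \eqref{eq:typesdecomp}.

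The second equality then follows from a direct Leibniz manipulation: writing $f(\N_p h)=\N_p(fh)-h(\N_p f)$ with $f=g^{ap}\gw^{bw}\tau_{bae}$ and $h=\dot J_w^e$ and using $\N g\equiv 0$ and $\N\gw\equiv 0$ from \eqref{eq:Cherncharac}, the derivative falls only onto $\tau$. Rewriting the result via Corollary \ref{cor:tausymmetries} and the identities \eqref{eq:identitylist} produces the term $-\tfrac{1}{2}g_{ey}g^{uw}(4\gw^{pv}\N_p\tau_{vu}^y)\dot J_w^e$, whose parenthesized factor is precisely the quantity driving $\dot J$ in Proposition \ref{prop:Jdot}, plus the divergence term $2\N_p\brk{g^{ap}\gw^{bw}\tau_{bae}\dot J_w^e}$. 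The main obstacle will be the vanishing of the $\dot J\ast\tau\ast\tau$ pieces: correctly identifying which terms cancel pairwise via Corollary \ref{cor:tausymmetries} versus which must be eliminated by a type-parity argument requires attentive bookkeeping of $J$-contractions, and is the only step in the proof that is not purely mechanical.
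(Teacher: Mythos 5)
Your proposal is correct and follows essentially the same route as the paper's proof: differentiate $g^{ij}\tau_{ie}^w\tau_{jw}^e$ via Corollary \ref{cor:Btraces}, insert Lemma \ref{lem:tauevolution}, collapse the $J\ast\N\dot J$ pieces with Proposition \ref{prop:taucyclic} and Corollary \ref{cor:tausymmetries}, kill the algebraic $\dot J\ast\tau\ast\tau$ pieces by a pairwise cancellation using $\sqg{J,\dot J}=0$ together with the $(1,1)$-versus-$(2,0+0,2)$ orthogonality argument, and obtain the second line by moving the Chern derivative at the cost of a divergence term. No substantive differences from the paper's argument.
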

\subsection{Scalar curvature variation}
In this section we begin computing variations of curvature objects.
\noindent Recall the evolution for the Riemannian scalar curvature is
\begin{equation}\label{eq:R}
\dot{\cR} =- g^{ip} g^{qj}\dot{g}_{pq} \Rc_{ij} - \lap_D \prs{g^{ij}\dot{g}_{ij}} + g^{ip} g^{jq} D_j  D_i \dot{g}_{pq},
\end{equation}
where here $\lap_D$ is the Levi--Civita rough Laplacian, given in coordinates by $\lap_D \prs{\cdot} := g^{ij} D_i D_j \prs{\cdot}$. We convert the second and third terms of \eqref{eq:R} to be in terms of the Chern connection.
\begin{prop}\label{prop:RChernconn} We have that
\begin{align*}
\dot{\cR} %&=- g^{ip} g^{qj}\dot{g}_{pq} \Rc_{ij}-  \lap \prs{g^{ij} \dot{g}_{ij}}  +  g^{ip} g^{jq}\prs{ \N_j \N_i \dot{g}_{pq}}- g^{dw}g^{qj}\N_j \brk{\dot{g}_{pd} \tau_{qw}^p } \\
&=- \ip{\dot{g}, \Rc} - \lap \prs{\tr_g \dot{g}} +   g^{ip} g^{jq}\prs{ \N_j \N_i \dot{g}_{pq}}- g^{dw}g^{qj}\N_j \brk{\dot{g}_{pd} \tau_{qw}^p }.
\end{align*}
\begin{proof} We convert each piece of \eqref{eq:R}. First,
\begin{align*}
- \lap_D \prs{ g^{ij} \dot{g}_{ij}} &= - g^{uv}\prs{ D_u D_v g^{ij} \dot{g}_{ij}}\\
&= - g^{uv}\prs{ D_u \N_v g^{ij} \dot{g}_{ij}}\\ %&\mbox{convert inner Chern (free)}\\
&= - g^{uv}\prs{ \N_u \N_v g^{ij} \dot{g}_{ij}} + g^{uv}  \Theta_{uv}^d \prs{\N_d g^{ij} \dot{g}_{ij}} \\%&\mbox{convert outer Chern: $D=  \N + \Theta$}  \\
&= -  \lap \prs{g^{ij} \dot{g}_{ij}}.% &\mbox{simplify, note $\tr_g \Theta^k = 0$}
\end{align*}
Next we have
\begin{align*}
g^{ip} g^{jq} D_j D_i \dot{g}_{pq} &= g^{ip} g^{jq} D_j \prs{\N_i \dot{g}_{pq} - \Theta_{ip}^d \dot{g}_{dq} - \Theta_{iq}^d \dot{g}_{pd} } &\mbox{\eqref{eq:Cherndef}}\\
&= g^{ip} g^{jq} \N_j \prs{\N_i \dot{g}_{pq} - \Theta_{ip}^d \dot{g}_{dq} - \Theta_{iq}^d \dot{g}_{pd} } \\
&\hsp- g^{ip} g^{jq} \Theta_{jq}^e \prs{\N_i \dot{g}_{pe} - \Theta_{ip}^d \dot{g}_{de} - \Theta_{ie}^d \dot{g}_{pd} } &\mbox{\eqref{eq:Cherndef}}\\
&= g^{ip} g^{jq} \N_j \prs{\N_i \dot{g}_{pq} - \Theta_{iq}^d \dot{g}_{pd} } &\mbox{Lemma \ref{lem:tracelesstau}} \\
&= g^{ip} g^{jq}\prs{ \N_j \N_i \dot{g}_{pq}} -  \N_j  \prs{\Theta_{iq}^d \dot{g}_{pd} }\\
&= g^{ip} g^{jq}\prs{ \N_j \N_i \dot{g}_{pq}} - g^{jq}g^{dw} \N_j  \brk{\tau_{qw}^p \dot{g}_{pd} }.&\mbox{\eqref{eq:Cherndef}}
\end{align*}
The result follows.
\end{proof}
\end{prop}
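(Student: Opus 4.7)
The plan is to begin with the classical Riemannian variation formula \eqref{eq:R} for $\dot{\cR}$ and systematically convert its Levi--Civita pieces into Chern pieces via $D = \N + \Theta$ from \eqref{eq:Cherndef}, exploiting the identification $\Theta_{ijk} = \tau_{jki}$ supplied by Corollary \ref{cor:Thetaggtau} together with Theorem \ref{thm:Nid}. The crucial simplifying fact is Lemma \ref{lem:tracelesstau}: any $g$-trace of $\tau$ vanishes, so most of the $\Theta$-corrections generated during the conversion are killed by the symmetric $g$-contractions already present in \eqref{eq:R}.

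The first term $-g^{ip}g^{qj}\dot g_{pq}\Rc_{ij}$ is already in invariant form as $-\ip{\dot g,\Rc}$ and needs no work. For the second term, since $\tr_g\dot g = g^{ij}\dot g_{ij}$ is a scalar, $\N$ and $D$ agree on its gradient, and the second derivative picks up only one correction: $D_uD_v(\tr_g\dot g) = \N_u\N_v(\tr_g\dot g) - \Theta_{uv}^d\,\N_d(\tr_g\dot g)$. Tracing by $g^{uv}$ produces the factor $g^{uv}\Theta_{uv}^d$, which under the $\Theta\leftrightarrow\tau$ dictionary becomes a $g$-trace of $\tau$ on two slots (killed by Lemma \ref{lem:tracelesstau} after a single use of anti-symmetry from Corollary \ref{cor:tausymmetries}). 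Hence $\lap_D(\tr_g\dot g) = \lap(\tr_g\dot g)$.

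The third term $g^{ip}g^{jq}D_jD_i\dot g_{pq}$ is where the torsion survives. I first expand the inner derivative using \eqref{eq:Cherndef}, writing $D_i\dot g_{pq} = \N_i\dot g_{pq} - \Theta_{ip}^d\dot g_{dq} - \Theta_{iq}^d\dot g_{pd}$. For the outer $D_j$, rather than expand all three $\Theta$-corrections appropriate to a $(0,3)$-tensor, I exploit that $g$ is parallel under both connections: pushing $g^{ip}$ inside the outer derivative and treating the resulting contracted object as a one-form in $q$ leaves only a single outer correction $-\Theta_{jq}^e$ (equivalently, the $\Theta_{ji}^d$ and $\Theta_{jp}^d$ corrections cancel pairwise after symmetrization against $g^{ip}$ by anti-symmetry of $\tau$ in its first two slots). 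Of the remaining $\Theta$-terms, both $g^{ip}\Theta_{ip}^d$ and $g^{jq}\Theta_{jq}^e$ vanish (each is a $g$-trace of $\tau$ on two slots killed by Lemma \ref{lem:tracelesstau} after using Corollary \ref{cor:tausymmetries}), leaving a single divergence $-g^{ip}g^{jq}\N_j(\Theta_{iq}^d\dot g_{pd})$. Substituting $\Theta_{iq}^d = g^{dw}\tau_{qwi}$ and contracting $g^{ip}\tau_{qwi} = \tau_{qw}^p$ converts this into $-g^{dw}g^{qj}\N_j[\dot g_{pd}\tau_{qw}^p]$, matching the stated formula.

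The main obstacle is organized bookkeeping rather than any deep identity: one has to keep track of which $\Theta$-corrections are genuine $g$-traces of $\tau$ and hence vanish, and which must be retained. Once this sorting is in hand, summing the three converted pieces of \eqref{eq:R} yields the claim immediately; no further curvature identities (like the Bianchi identities of Theorem \ref{thm:CBianchi}) are required here, since no second derivatives of $\tau$ or contractions of $\Omega$ enter.
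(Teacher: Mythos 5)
Your proposal is correct and follows essentially the same route as the paper: convert \eqref{eq:R} term by term via $D=\N+\Theta$, kill the $g^{uv}\Theta_{uv}^d$, $g^{ip}\Theta_{ip}^d$ and $g^{jq}\Theta_{jq}^e$ corrections by the tracelessness of $\tau$ (Lemma \ref{lem:tracelesstau} together with the skew-symmetry from Corollary \ref{cor:tausymmetries}), and retain the single surviving divergence $-g^{dw}g^{qj}\N_j\brk{\dot g_{pd}\tau_{qw}^p}$. Your parenthetical cancellation of the $\Theta_{ji}^d$ and $\Theta_{jp}^d$ corrections (antisymmetry of $\tau$ in its first two slots against the $g^{ip}$-symmetrization) is also a valid alternative to pushing $g^{ip}$ inside the outer derivative, which is how the paper phrases it.
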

\section{Evolutions along symplectic curvature flow}\label{s:SCFevs}
Here we provide evolutions along symplectic curvature flow of key quantities. We simplify the evolution equation of $J$ (Proposition \ref{prop:Jdot}), and derive evolution equations for the norm of Chern torsion and scalar curvature (Theorem \ref{thm:Cherntors}, \ref{thm:Chernscalcurv}).
\subsection{Almost complex structure evolution}\label{eq:eqns}
Recall that the $J$ evolution along symplectic curvature flow is given in (\cite{ST}, in the proof of Lemma 9.2) by
\begin{align}
\begin{split}\label{eq:Jevorig}
\prs{\tfrac{\del J}{\del t}}_{\mathbf{i}}^{\mathbf{k}} &= -\prs{\cP_{\mathbf{i}w}^{2,0+0,2} - 2 J_{\mathbf{i}}^e \Rc_{ew}^{2,0+0,2} } g^{w\mathbf{k}}.
\end{split}
\end{align}
\begin{proof}[Proof of Proposition \ref{prop:Jdot}]
We compute
\begin{align*}
2 J_j^e \Rc^{2,0+0,2}_{el} -\cP^{2,0+0,2}_{jl}
&=- 2 J_j^e g^{ur} \prs{\N_u \tau_{rel} + \N_u \tau_{rle}} + 2 \gw^{rk} \N_r \tau_{jlk}. &\mbox{Corollary \ref{cor:Rc2002},  Lemma \ref{lem:P2,0+0,2}} \\
&=- 2 \gw^{yu} \prs{\N_u \tau_{yjl} + \N_u \tau_{ylj}} + 2 \gw^{rk} \N_r \tau_{jlk}. &\mbox{Corollary \ref{cor:tausymmetries}} \\
&= -2 \gw^{mr} \prs{\N_r \tau_{mjl} + \N_r \tau_{mlj}} + 2 \gw^{rm} \N_r \tau_{jlm}.\\% &\mbox{relabel} \\
&= -2 \gw^{mr} \prs{\N_r \tau_{mjl}} + 2 \gw^{mr}\N_r\prs{ \tau_{lmj} +  \tau_{jlm}}.\\%&\mbox{combine} \\
&= -4 \gw^{mr} \prs{\N_r \tau_{mjl}}. &\mbox{Proposition \ref{prop:taucyclic}}
\end{align*}
Raising by $g^{-1}$ yields the desired result.
\end{proof}
\noindent Thus, symplectic curvature flow can be written alternatively using the following evolution equations
\begin{equation}\label{eq:SCF}
\begin{cases}
\prs{\tfrac{\del \gw}{\del t}}_{\mathbf{ij}} &= - \cP_{\mathbf{ij}} \\
\prs{\tfrac{ \del J}{\del t}}_{\mathbf{i}}^{\mathbf{k}} &=4 \gw^{re} \prs{\N_r \tau_{e \mathbf{i}}^{\mathbf{k}}} \\
\prs{\tfrac{\del g}{\del t}}_{\mathbf{ij}} &= - J_{\mathbf{i}}^s \cP_{\mathbf{j}s} + 4g^{ra}\prs{\N_r \tau_{a\mathbf{ij}}}.
\end{cases}
\end{equation}
\begin{rmk}
Note $\mathcal{H}^1$ mentioned in (\cite{ST}, (6.4)) vanishes identically. Compared to \cite{ST} there is a conventional factor of $2$ difference from the definition of $N$ (cf. \cite{ST} (2.1)).
\end{rmk}
\subsection{Chern norm squared evolution}
We now approach the evolution of the Chern torsion norm squared. We begin with a lemma which gives an alternate form for a key term in the proof of Theorem \ref{thm:Cherntors}.
\begin{lemma}\label{lem:laplacetauid}
We have that
\begin{align*}
\lap \brs{\tau}^2 &=-2  \ip{\Rc^{1,1}, \cB}+ \brs{\cB}^2  +  8 g^{ap}g^{bq}g^{cs}\prs{\N_a \N_r \tau_{rbc}} \tau_{pqs} + 2 \brs{\N \tau}^2 \\
&\hsp +  4g^{ap}g^{bq} g^{dr}g^{es} \Omega_{rpqs}\prs{ \tau_{edc} \tau_{abc} +  \tau_{cde} \tau_{acb} + \tau_{ced}\tau_{acb}}.
\end{align*}
\begin{proof} Since we have removed all $J$, $\omega$ from this context, all contractions are in terms of $g$, so we may simplify our notation: matching indices denotes contraction by the metric. Noting that \[ \lap \brs{\tau}^2 = 2 \prs{\lap \tau_{abc}} \tau_{abc} + 2 \brs{\N \tau}^2, \] we manipulate the first term.
\begin{align*}
2 \prs{\lap \tau_{abc}} \tau_{abc} &= 2 \prs{\N_d \N_d \tau_{abc}} \tau_{abc} \\
&= -2 \N_d \prs{\N_a \tau_{bdc} + \N_b \tau_{dac} } \tau_{abc} \\
&\hsp + 2 \N_d \prs{\Omega_{abdc} + \Omega_{dabc} + \Omega_{bdac}} \tau_{abc} \\
&\hsp + 2 \N_d \prs{ \tau_{asc} \tau_{bd}^s + \tau_{bsc} \tau_{da}^s + \tau_{dsc} \tau_{ab}^s } \tau_{abc} &\mbox{Theorem \ref{thm:CBianchi}}\\
&= - 4 \prs{\N_d \N_a \tau_{bdc} } \tau_{abc} + 2 \prs{\N_d \Omega_{abdc}} \tau_{abc}\\
&=- 4 \prs{\N_a \N_d \tau_{bdc} } \tau_{abc}-4 \prs{\brk{\N_d, \N_a} \tau_{bdc} } \tau_{abc}\\
&\hsp + 2\prs{ \N_d \Omega_{abdc}} \tau_{abc}. &\mbox{Lemma \ref{lem:commutator}}
\end{align*}
The first term is in the desired form. We now manipulate the last curvature quantity,
\begin{align*}
2 \prs{\N_d \Omega_{abdc}} \tau_{abc} &= -2 \prs{\N_a \Omega_{bddc} + \N_b \Omega_{dadc}} \tau_{abc} \\
&\hsp +2 \prs{\Omega_{amdc} \tau_{bdm} + \Omega_{bmdc} \tau_{dam} + \Omega_{dmdc} \tau_{abm}} \tau_{abc} &\mbox{Theorem \ref{thm:CBianchi}} \\
&= - 4 \prs{\N_a \Omega_{bddc}} \tau_{abc} +  4 \Omega_{amdc} \tau_{bdm}\tau_{abc}  + 2 \Omega_{dmdc} \tau_{abm}\tau_{abc}\\
&= - 4 \prs{\N_a \cV_{bc}^{2,0+0,2}} \tau_{abc} +4 \Omega_{amdc} \tau_{bdm}\tau_{abc}  - \tfrac{1}{2} \ip{ \cV^{1,1}, \cB^1}&\mbox{Lemma \ref{lem:Bformulas} } \\
&= 4 \prs{\N_a \N_r \tau_{rbc}} \tau_{abc} +4 \Omega_{amdc} \tau_{bdm}\tau_{abc}  - \tfrac{1}{2} \ip{ \cV^{1,1}, \cB^1}. &\mbox{Lemma \ref{lem:RcOmega} }
\end{align*}
We now expand out the commutator.
\begin{align*}
-4 \prs{\brk{\N_d, \N_a} \tau_{bdc} } \tau_{abc}  &= 4 \prs{\Omega_{dabe} \tau_{edc} +\Omega_{dade} \tau_{bec} + \Omega_{dace} \tau_{bde} + \tau_{dae} \N_e \tau_{bdc} }\tau_{abc} &\mbox{Lemma \ref{lem:commutator}} \\
 &= 4 \prs{\Omega_{dabe} \tau_{edc} -\cV_{ae}^{1,1} \tau_{bec} + \Omega_{dace} \tau_{bde} } \tau_{abc} &\mbox{Corollary \ref{cor:tautauid}}\\
 &= \ip{\cV^{1,1}, \cB^2}+ 4 \Omega_{dabe}\prs{ \tau_{edc} \tau_{abc} +  \tau_{cde} \tau_{acb} }. &\mbox{Lemma \ref{lem:Bformulas}}
\end{align*}
Adding these all together combining $\cB$ type terms using \eqref{eq:cBforms} and simplifying yields
\begin{align*}
2 \prs{\lap \tau_{abc}} \tau_{abc}
%&=\ip{\cV^{1,1}, \cB^2 - \tfrac{1}{2} \cB^1}+  8 \prs{\N_a \N_r \tau_{rbc}} \tau_{abc}\\
%&\hsp + 4 \Omega_{dabe}\prs{ \tau_{edc} \tau_{abc} +  \tau_{cde} \tau_{acb} } +4 \Omega_{amdc} \tau_{bdm}\tau_{abc}\\
%%%%
&=-2 \ip{\cV^{1,1}, \cB}+  8 \prs{\N_a \N_r \tau_{rbc}} \tau_{abc} + 4 \Omega_{dabe}\prs{ \tau_{edc} \tau_{abc} +  \tau_{cde} \tau_{acb} + \tau_{ced}\tau_{acb}}.
\end{align*}
Now let's take a look at $\cV^{1,1}$. Projecting Corollary \ref{cor:RcOmega} onto the $(1,1)$ part gives
\begin{equation}\label{eq:VBstuff}
\Rc_{jk}^{1,1} %&= \cV_{jk}^{1,1}  -\tau_{dlj} \tau_{kdl}  + \tau_{djc}\tau_{kdc} \\
= \cV_{jk}^{1,1}  -\tau_{dlj} \tau_{kdl}  - \tau_{jdc}\tau_{kdc}.
\end{equation}
The last term is a multiple of $\cB^2$. Now we take a moment to observe that
\begin{align*}
\tau_{dlj} \tau_{kdl} &= -\tau_{dlj} \prs{ \tau_{dlk} + \tau_{lkd} }  &\mbox{Lemma \ref{prop:taucyclic}}\\
&= -\tau_{dlj} \tau_{dlk} -\tau_{dlj}\tau_{lkd}\\
&=   - \tfrac{1}{2} \tau_{dlj} \tau_{dlk} &\mbox{(identify with start term)}\\
&= - \tfrac{1}{8} \cB^1_{jk}. &\mbox{Lemma \ref{lem:Bformulas}}
\end{align*}
Then updating \eqref{eq:VBstuff} produces the following equality
\begin{align*}
\Rc_{jk}^{1,1} &= \cV_{jk}^{1,1}  + \tfrac{1}{8} \cB^1_{jk}   -  \tfrac{1}{4} \cB^2_{jk}= \cV^{1,1}_{jk} + \tfrac{1}{2} \cB_{jk}.
\end{align*}
Combining yields the result.
\end{proof}
\end{lemma}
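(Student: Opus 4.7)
The plan is to start from $\lap \brs{\tau}^2 = 2 \prs{\lap \tau_{abc}} \tau_{abc} + 2 \brs{\N \tau}^2$, which already accounts for the $2\brs{\N \tau}^2$ summand on the right-hand side (and implicitly uses $\N g = 0$), so the problem reduces to massaging $(\lap \tau_{abc}) \tau_{abc}$ into the stated form. To that end I would first apply the first Chern--Bianchi identity \eqref{eq:curve1} of Theorem \ref{thm:CBianchi} to the inner $\N_d \tau_{abc}$ (viewed as a cyclic sum in $d,a,b$) and then differentiate once more by $\N_d$. This trades $\N_d \N_d \tau_{abc}$ for an expression with a displaced derivative $\N_d \N_a \tau_{bdc}$, plus a $\N_d \Omega_{abdc}$ term, plus $\N_d(\tau \ast \tau)$ contractions.

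Next I would commute $\brk{\N_d, \N_a}$ via Lemma \ref{lem:commutator} to pass to $\N_a \N_d \tau_{bdc}$, paying the price of three curvature--torsion contractions $\Omega_{da\bullet\bullet} \cdot \tau$ and one derivative--torsion term $\tau_{da}^e \N_e \tau_{bdc}$. The crucial observation is that the trace $g^{dr}\N_r \tau_{dbc}$ equals $-\cV^{2,0+0,2}_{bc}$ by Lemma \ref{lem:RcOmega}, so after the index contraction the leading piece becomes $\N_a \cV^{2,0+0,2}_{bc}$, and a second use of Lemma \ref{lem:RcOmega} rewrites it as the desired $\N_a \N_r \tau_{rbc}$ coefficient of $8$ in the claim. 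For $\N_d \Omega_{abdc}$ I would invoke the second Chern--Bianchi identity \eqref{eq:curve2}, converting it into $\N_a \Omega_{bddc} = \N_a \cV_{bc}$ (and permutations), which Lemma \ref{lem:RcOmega} then repackages compatibly; the residual $\Omega \cdot \tau$ contractions from this step and from the commutator feed directly into the three-term sum $\Omega_{rpqs}\prs{\tau_{edc}\tau_{abc} + \tau_{cde}\tau_{acb} + \tau_{ced}\tau_{acb}}$ on the right-hand side.

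The final task is to convert the $\cV^{1,1}$ dependence into an $\Rc^{1,1}$ dependence and to isolate the $\brs{\cB}^2$ term. Projecting Corollary \ref{cor:RcOmega} onto its $(1,1)$ part, and using Proposition \ref{prop:taucyclic} together with Corollary \ref{cor:tausymmetries} to recognize the identity $\tau_{dlj}\tau_{kdl} = -\tfrac{1}{8}\cB^1_{jk}$ (from Lemma \ref{lem:Bformulas}), one obtains the clean relation $\Rc^{1,1} = \cV^{1,1} + \tfrac{1}{2}\cB$. Substituting $\cV^{1,1} = \Rc^{1,1} - \tfrac{1}{2}\cB$ into the terms accumulated above then converts the $-2\ip{\cV^{1,1}, \cB}$ contribution from the commutator into $-2\ip{\Rc^{1,1}, \cB} + \brs{\cB}^2$, matching the statement.

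The main obstacle will be the combinatorial bookkeeping of cubic torsion terms. Many structurally similar contractions appear after the two Bianchi applications and the commutator, and they must be systematically sorted using Proposition \ref{prop:taucyclic} and the pairwise $(2,0+0,2)$ symmetries of $\tau$ from Corollary \ref{cor:tausymmetries} to decide which collapse into the Hermitian bilinears of Lemma \ref{lem:Bformulas} (producing $\cB^1$, $\cB^2$, or $\cB$) and which survive as the three-term curvature--torsion sum. Sign discipline across the cyclic Bianchi shuffles and the commutator is the most error-prone aspect, but it is entirely mechanical.
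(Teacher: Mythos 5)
Your proposal follows essentially the same route as the paper's proof: expand $\lap\brs{\tau}^2$ into $2\ip{\lap\tau,\tau}+2\brs{\N\tau}^2$, use the first Bianchi identity of Theorem \ref{thm:CBianchi} plus the commutator of Lemma \ref{lem:commutator}, convert the divergence-of-torsion traces to $\N_a\N_r\tau_{rbc}$ via Lemma \ref{lem:RcOmega} together with the second Bianchi identity, sort the cubic torsion contractions into $\cB$-type terms by Lemma \ref{lem:Bformulas}, and finish with the relation $\Rc^{1,1}=\cV^{1,1}+\tfrac12\cB$ to trade $-2\ip{\cV^{1,1},\cB}$ for $-2\ip{\Rc^{1,1},\cB}+\brs{\cB}^2$. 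The plan is correct and matches the paper step for step, with only the (admittedly error-prone but mechanical) index bookkeeping left to execute.
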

\begin{proof}[Proof of Theorem \ref{thm:Cherntors}] We have that
\begin{align*}
\tfrac{\del}{\del t}\brs{ \tau }^2 &=2 g^{ap} \gw^{bw}\tau_{bae}\prs{\N_p \prs{\tfrac{\del J}{\del t}}_w^e} + \ip{\tfrac{\del g}{\del t}, \cB} &\mbox{Lemma \ref{lem:normtauvar}}\\
&=8 g^{ap} \gw^{bw} \gw^{uv}\tau_{bae}\prs{\N_p \N_u \tau_{vw}^e} -2 \ip{ \Rc, \cB} + 2 \brs{\cB}^2 &\mbox{\eqref{eq:SCF}}\\
&=- 8 g^{ap} J_w^m \gw^{bw} J_v^n\gw^{uv}\tau_{bae}\prs{\N_p \N_u \tau_{nm}^e} -2 \ip{ \Rc, \cB} + 2 \brs{\cB}^2 &\mbox{Corollary \ref{cor:tausymmetries}}\\
&= 8  \tau_{pbe}\prs{\N_p \N_u \tau_{ube}} -2 \ip{ \Rc, \cB} + 2 \brs{\cB}^2\\
&= \lap \brs{\tau}^2 + \brs{\cB}^2 - 2 \brs{\N \tau}^2 -  4g^{ap}g^{bq} g^{dr}g^{es} \Omega_{rpqs}\prs{ \tau_{edc} \tau_{abc} +  \tau_{cde} \tau_{acb} + \tau_{ced}\tau_{acb}}, &\mbox{Lemma \ref{lem:laplacetauid}}
\end{align*}
which yields the first result of Theorem \ref{thm:Cherntors}. We analyze the last term using reduced notation again (so contractions by $g$ will be denoted by matching indices).
\begin{equation}\label{eq:Omegatautauform}
\prs{\star} = -  4 \Omega_{dabe}\prs{ \tau_{edc} \tau_{abc} +  \tau_{cde} \tau_{acb} + \tau_{ced}\tau_{acb}}.
\end{equation}
Let's consider
\begin{align*}
\Omega_{dabe} \tau_{cde}\tau_{acb} &= - \prs{\Omega_{abde} + \Omega_{bdae} }\tau_{cde}\tau_{acb} \\
&\hsp- \prs{ \tau_{ase}\tau_{bds} +  \tau_{bse}\tau_{das} +  \tau_{dse}\tau_{abs} }\tau_{cde}\tau_{acb} &\mbox{Theorem \ref{thm:CBianchi}}\\
&= - \Omega_{bdae} \tau_{cde}\tau_{acb}-  \tau_{dse}\tau_{abs} \tau_{cde}\tau_{acb}. \\
&= Q^1 + Q^2.
\end{align*}
We manipulate the first term
\begin{align*}
Q^1 = - \Omega_{bdae} \tau_{cde} \tau_{acb} &= - \Omega_{daeb} \tau_{cab} \tau_{ecd} \\
&= \Omega_{dabe} \tau_{cab} \tau_{ecd} &\mbox{Lemma \ref{lem:Cherncurvsym}}\\
&= \Omega_{dabe} \tau_{acb} \tau_{ced}. &\mbox{Corollary \ref{cor:tausymmetries}}
\end{align*}
Next we have that
\begin{align*}
Q^2 = -  \prs{ \tau_{dse}\tau_{cde}}\prs{\tau_{abs}\tau_{acb}} &=   \prs{ \tau_{sde}\tau_{cde}}\prs{ -\prs{ \tau_{sab} + \tau_{bsa} }\tau_{acb}} &\mbox{Proposition \ref{prop:taucyclic} }\\
%&= \tfrac{1}{4} \cB^2_{sc}\prs{ -\tau_{sab}\tau_{acb} - \tau_{bsa} \tau_{acb}} &\mbox{Lemma \ref{lem:Bformulas}}\\
%&= \tfrac{1}{4} \cB^2_{sc}\prs{ \tau_{sab}\tau_{cab} - \tau_{sba} \tau_{cab}}\\
&= \tfrac{1}{4} \cB^2_{sc}\prs{ \tfrac{1}{4} \cB^2_{sc} + \tfrac{1}{2} \cB_{sc}} &\mbox{Lemma \ref{lem:Bformulas}} \\
&=  \tfrac{1}{4} \prs{ \tfrac{1}{8} \ip{\cB^2, \cB^1}}.
\end{align*}
Therefore updating \eqref{eq:Omegatautauform} we have
\begin{align*}
\prs{\star} %&=  -  4 \Omega_{dabe}\prs{ \tau_{edc} \tau_{abc} +  2 \tau_{ced}\tau_{acb}}  - \tfrac{1}{4} \brs{\cB^2}^2 - \tfrac{1}{2} \ip{\cB^2, \cB} \\
%&=  -  4 \Omega_{dabe}\prs{ \tau_{edc} \tau_{abc} +  2 \tau_{ced}\tau_{acb}}  - \tfrac{1}{4} \brs{\cB^2}^2 - \tfrac{1}{2} \ip{\cB^2, \tfrac{1}{4} \cB^1 - \tfrac{1}{2} \cB^2} \\
%&=  -  4 \Omega_{dabe}\prs{ \tau_{edc} \tau_{abc} +  2 \tau_{ced}\tau_{acb}}  - \tfrac{1}{4} \brs{\cB^2}^2 +   \tfrac{1}{4} \brs{\cB^2}^2 - \tfrac{1}{8} \ip{\cB^2, \cB^1} \\
&=  -  4 \Omega_{dabe}\prs{ \tau_{edc} \tau_{abc} +  2 \tau_{ced}\tau_{acb}} - \tfrac{1}{8} \ip{\cB^2, \cB^1}.
\end{align*}
We recall that, using \eqref{eq:cBforms},
\begin{equation*}\label{eq:Bnormcomp}
\brs{\cB}^2 = \brs{\tfrac{1}{4} \cB^1 - \tfrac{1}{2} \cB^2}^2 = \tfrac{1}{16} \brs{\cB^1}^2 + \tfrac{1}{4} \brs{\cB^2}^2 - \tfrac{1}{4} \ip{\cB^1, \cB^2}.
\end{equation*}
Thus for $\dim_{\mathbb{C}} M = 2$, via Lemma \ref{lem:Dai14.10}, we have
\begin{equation}\label{eq:Btermstau4}
\brs{\cB^1}^2= 8 \brs{\tau}^4, \quad \brs{\cB^2}^2 = 4 \brs{\tau}^4, \quad \ip{\cB^1, \cB^2} = 4 \brs{\tau}^4, \text{ so } \brs{\cB}^2 = \tfrac{1}{2} \brs{\tau}^4 + \brs{\tau}^4- \brs{\tau}^4 = \tfrac{1}{2} \brs{\tau}^4.
\end{equation}
Incorporating these identities in, the second result of Theorem \ref{thm:Cherntors} for $\dim_{\mathbb{C}} M =2$ follows.
%
%\begin{align*}
%\tfrac{\del}{\del t}\brs{ \tau }^2 
%%&= \lap \brs{\tau}^2 + \brs{\cB}^2 - 2 \brs{\N \tau}^2 -  4 \Omega_{dabe}\prs{ \tau_{edc} \tau_{abc} +  \tau_{cde} \tau_{acb} + \tau_{ced}\tau_{acb}}, &\mbox{Lemma \ref{lem:laplacetauid}} \\
%&= \lap \brs{\tau}^2  - 2 \brs{\N \tau}^2 -  4 \Omega_{dabe}\prs{ \tau_{edc} \tau_{abc} +  2 \tau_{ced}\tau_{acb}} + \brs{\cB}^2 - \tfrac{1}{8} \ip{\cB^2, \cB^1}, &\\
%&= \lap \brs{\tau}^2  - 2 \brs{\N \tau}^2 -  4 \Omega_{dabe}\prs{ \tau_{edc} \tau_{abc} -  2 \tau_{ced}\tau_{cab}}. &\mbox{$\dim_{\mathbb{C}} M =2$}
%\end{align*}
%
%The second result follows.
\end{proof}
\begin{proof}[Proof of Corollary \ref{cor:Cherncurvcont}]
Referring to \eqref{eq:SCF} for symplectic curvature flow in terms of Chern connection quantities, it is clear that one needs appropriate control of $\brs{\N \tau}^2$, which follows naturally from a combination of the assumed control of $\Omega$ and the subsequent control of $\brs{\tau}^2$ combined with Theorem 7.10 of \cite{ST}.
\end{proof}
\subsection{Scalar curvature evolution}\label{ss:chernscalcurv}
We now give the evolution of the corresponding Chern scalar curvature.
\begin{proof}[Proof of Theorem \ref{thm:Chernscalcurv}]
Using Corollary \ref{cor:scalcurv} we divide the variation into two pieces.
\begin{align*}
\tfrac{1}{2} \tfrac{\del \varrho}{\del t} &= \tfrac{\del \cR}{\del t} + \tfrac{\del}{\del t} \brs{\tau}^2.
\end{align*}
First we have that, considering the variation of $g$ as a modified Ricci flow,
\begin{align*}
\tfrac{\del \cR}{\del t} &= \lap \cR+2  \lap \brs{\tau}^2 + 2 \brs{\Rc}^2 - 2 \ip{\cB, \Rc}  + 2  g^{ip} g^{jq}\prs{ \N_j \N_i \cB_{pq}} &\mbox{Proposition \ref{prop:RChernconn} } \\
 &= \lap \cR+2  \lap \brs{\tau}^2 +\prs{ 2 \brs{\Rc}^2 - 4 \ip{\cB, \Rc}+2 \brs{\cB}^2}\\
 &\hsp  + 2 \ip{\cB, \Rc} - 2 \brs{\cB}^2 + 2  g^{ip} g^{jq}\prs{ \N_j \N_i \cB_{pq}} &\mbox{(insert/remove $\brs{\cB}^2$, $\ip{\cB,\Rc}$)}\\
 &= \lap \cR+2  \lap \brs{\tau}^2 +2 \brs{\Rc- \cB}^2 + 2 \ip{\cB, \Rc} - 2 \brs{\cB}^2\\
 &\hsp + 2  g^{ip} g^{jq}\prs{ \N_j \N_i \cB_{pq}}.
\end{align*}
Combining with  Theorem \ref{thm:Cherntors} we note the convenient combination of terms
%Additionally we have that
%%
%\begin{align*}
%\tfrac{\del}{\del t} \brs{\tau}^2 &= \lap \brs{\tau}^2 + \brs{\cB}^2 - 2 \brs{\N \tau}^2 -  4 \Omega_{dabe}\prs{ \tau_{edc} \tau_{abc} +  \tau_{cde} \tau_{acb} + \tau_{ced}\tau_{acb}}.
%\end{align*}
%
%Summing these up we have
%
\begin{align*}
\tfrac{1}{2} \tfrac{\del \varrho}{\del t} &= \tfrac{\del}{\del t}\prs{\cR + \brs{\tau}^2 } \\
&=  \lap \cR +2 \brs{\Rc- \cB}^2 +2  \lap \brs{\tau}^2+ 2  g^{ip} g^{jq}\prs{ \N_j \N_i \cB_{pq}}\\
&\hsp +  \lap \brs{\tau}^2 - 2 \brs{\N \tau}^2 + 2 \ip{\cB, \Rc} - \brs{\cB}^2  \\
&\hsp -   4g^{ap}g^{bq} g^{dr}g^{es} \Omega_{rpqs}\prs{ \tau_{edc} \tau_{abc} +  \tau_{cde} \tau_{acb} + \tau_{ced}\tau_{acb}} \\
&=\tfrac{1}{2}  \lap \varrho +2 \brs{\Rc- \cB}^2 +\lap \brs{\tau}^2 + 2  \prs{ \N_j \N_i \cB_{ij}} +  8 \prs{\N_a \N_r \tau_{rbc}} \tau_{abc}. &\mbox{Lemma \ref{lem:laplacetauid}}
\end{align*}
The result follows.
\end{proof}
\section{Rigidity result}\label{s:rigidity}
We now give an improvement of the following classification of static points of \cite{ST}.
\begin{customcor}{9.5 of \cite{ST}}\label{cor:2.5ofST}
A compact static structure $\prs{M^4,\gw,J}$ is K\"{a}hler--Einstein.
\end{customcor}
\noindent By \emph{static} the authors mean a solution to symplectic curvature flow for which there exists a $\la \in \mathbb{R}$ such that
\begin{align}\label{eq:staticdef}
\begin{cases}
\tfrac{\del \gw_t}{\del t} = - \la \gw_t \qquad &\gw_0 = \left. \gw_t \right|_{t=0}, \\
\tfrac{\del J_t}{\del t} = 0 \qquad &J_0 = \left. J_t \right|_{t=0}. \\ 
\end{cases}
\end{align}
The first condition arises for solutions where one rescales by the metric, while the second is a natural assumption since one cannot scale almost complex structures. Static structures are expected smooth limit points of symplectic curvature flow. The method for proving Corollary 2.5 of \cite{ST} relies on Theorem 2 of \cite{AAD}, which is highly dependent on the dimensional and compactness assumptions. In certain cases we will extend this with a straightforward strategy which removes the compactness assumption. %Note there are a plethora of examples of complete, strictly almost K\"{a}hler manifolds (cf. \S4 of \cite{AAD1}).
%We recall the methodology of \cite{ST} to prove Corollary \ref{cor:2.5ofST}. by showing that static structures have $J$-invariant Ricci tensor, they can then apply the work of \cite{Armstrong} to consider a clean decomposition (in terms of self duality and two-form $J$ compatibility) of the Chern curvature in four dimensions which with \eqref{eq:staticdef} forces $\gw$ to be an eigenvector of the Weyl tensor. This setting  is equivalent to satisfying the \emph{third Gray condition} (cf. \cite{Gray} \S 5 (3)), so an application of \cite{AAD} Theorem 2 implies compact static structures are K\"{a}hler.
%We find that in comparison to the work of \cite{ST} the natural way to describe and study this flow is almost exclusively with respect to the Chern connection curvature. This follows in spirit with the more recent approach of Streets and Tian to pluriclosed flow, which was first introduced in \cite{ST10}. In \cite{ST1}, by noting the Bismut curvature acted as the flowing force of $\omega$, the authors demonstrated that rephrasing the flow in terms of the corresponding Bismut connection allowed one to glean off interesting facts about the flow's properties in a much more natural way. The key comparative observation is that in each case (K\"{a}hler Ricci, pluriclosed, and symplectic curvature flow) $\gw$ flows by the negative of some sort of scalar curvature of a given connection. The identity of this connection deeply influences the perspective with which one should approach the flow.
%
\begin{prop}\label{thm:rigidity} Suppose $\prs{M^4, \gw, J}$ is a complete almost K\"{a}hler manifold which is a \emph{static structure} in the sense of \eqref{eq:staticdef} for $\la \geq 0$. Suppose further that there is some $C_0 > 0$ such that
\begin{equation}\label{eq:naturalbds}
\Vol ( \cB_R ) \leq C_0 R^4, 
\qquad \brs{\brs{\Rm}}_{L^2}^2 \leq C_0.
\end{equation}
Then $\prs{M^4,\gw,J}$ is K\"{a}hler--Einstein.
\end{prop}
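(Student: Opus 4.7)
The plan is first to convert the static equations \eqref{eq:staticdef} into pointwise identities for the Chern torsion, and then to apply Sekigawa's integral formula for almost K\"{a}hler 4-manifolds in a cutoff-adapted form that replaces the usual compactness hypothesis.

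First I would extract the Chern-side content of \eqref{eq:staticdef}. Matching $\dot\gw = -\la \gw$ against the first equation of \eqref{eq:SCF} yields $\cP = \la \gw$; in particular $\cP^{2,0+0,2}\equiv 0$, which by Lemma \ref{lem:P2,0+0,2} says $\gw^{mn}\N_m \tau_{abn} \equiv 0$. Meanwhile $\dot J = 0$ together with Proposition \ref{prop:Jdot} forces
\begin{equation*}
\gw^{re}\N_r \tau^{\mathbf{k}}_{e\mathbf{i}} \equiv 0.
\end{equation*}
Differentiating $g_{\mathbf{ij}} = J^s_{\mathbf{i}} \gw_{\mathbf{j}s}$ in $t$ gives $\dot g = -\la g$. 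Comparing with \eqref{eq:gevolution} and noting that $\cB$ is of type $(1,1)$ (Corollary \ref{cor:KI}) produces $\Rc^{2,0+0,2} \equiv 0$, which by Corollary \ref{cor:Rc2002} is a second, independent vanishing identity for $\N \tau$. Together these are the precise almost K\"{a}hler rigidity inputs that Sekigawa-type formulas are designed to exploit.

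Second, I would invoke Sekigawa's integral formula for almost K\"{a}hler 4-manifolds. In compact form, Sekigawa's identity yields an integrated inequality whose sign-definite portion (involving $\brs{\N\tau}^2$ and polynomial expressions in $\tau$) is manifestly positive under $J$-invariance of $\Rc$ and Chern-Ricci positivity; here both are supplied by Step 1 together with $\la \geq 0$ (since $\varrho = 2n\la$ by Corollary \ref{cor:scalcurv} coupled with $\cP=\la\gw$). For complete $M$, I would localize via cutoffs $\eta_R \in C_c^\infty(M)$ with $\eta_R \equiv 1$ on $\cB_R$, $\supp \eta_R \subset \cB_{2R}$, and $\brs{\N\eta_R}\leq C/R$, and apply the formula to $\eta_R^2$ times the Sekigawa integrand. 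Passing to the limit $R \to \infty$ requires showing that the cutoff errors vanish; these decompose schematically as
\begin{equation*}
\int_M \eta_R \brs{\N\eta_R}\brs{\Rm}\brs{\tau}\,\dVg \quad \text{and} \quad \int_M \brs{\N\eta_R}^2\brs{\tau}^2\,\dVg.
\end{equation*}
Cauchy--Schwarz against $\brs{\brs{\Rm}}_{L^2}^2 \leq C_0$ reduces the first to $(C/R)\brs{\brs{\tau}}_{L^2(\cB_{2R}\setminus\cB_R)}$, while $\Vol(\cB_R)\leq C_0 R^4$ together with an a priori $L^2$ control on $\brs{\tau}$ (extracted from Corollary \ref{cor:scalcurv} and $\varrho = 4\la$ in this dimension) controls the second. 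In the limit, the integrated Sekigawa inequality collapses to a nonpositive sum of nonnegative pieces, forcing $\tau \equiv 0$ on $M$; by the proposition preceding Lemma \ref{lem:tracelesstau}, $(M,g,J)$ is then K\"{a}hler, and $\cP = \la \gw$ reduces to the K\"{a}hler--Einstein equation $\Rc = \la g$.

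The principal obstacle will be this last analytic step: closing the integration-by-parts estimates solely from $\brs{\brs{\Rm}}_{L^2}^2\leq C_0$ and $\Vol(\cB_R)\leq C_0 R^4$, in particular establishing the preliminary $L^2$-integrability of $\brs{\tau}$ without additional a priori hypotheses. A secondary issue is unifying the $\la = 0$ and $\la > 0$ regimes, since Sekigawa's sign-definiteness is sharpest for $\la > 0$ while the $\la = 0$ case is a steady soliton that requires a more delicate integration-by-parts to extract a positive divergence remainder.
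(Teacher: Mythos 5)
Your high-level strategy is the same as the paper's (static $\Rightarrow$ $\cP^{2,0+0,2}\equiv 0$, $\Rc^{2,0+0,2}\equiv 0$; then Sekigawa's formula plus cutoffs adapted to \eqref{eq:naturalbds}), but two essential mechanisms are missing, and the first thing you flag as an ``obstacle'' is in fact the core of the argument rather than a technicality. First, the way $\la\ge 0$ enters is not that Sekigawa's identity is ``manifestly sign-definite under Chern--Ricci positivity'': the formula contains the indefinite coupling $-2\ip{\cB,\Rc^{1,1}}$ (equivalently $-2g^{ia}g^{jb}\prs{\lap_D\gw_{ab}+\tfrac12\phi_{ab}}\prs{J_i^s\Rc^{1,1}_{js}}$), which has no sign by itself. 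The paper eliminates it by computing $\tfrac{\del}{\del t}\brs{\tau}^2$ twice for a static structure: directly from Lemma \ref{lem:normtauvar} with $\dot J=0$, $\dot g=-\la g$, giving a nonnegative multiple of $\la\brs{\tau}^2$ via Corollary \ref{cor:Btraces}; and again by substituting Corollary \ref{cor:topropseki} and Lemma \ref{lem:sekigawadiv2}, which converts $-\tfrac{\del}{\del t}\brs{\tau}^2$ into $\tfrac{1}{16}\brs{\cB^1}^2+\tfrac14\brs{\cB^2}^2-2\brs{\cB}^2+\brs{\cW^{2,0+0,2}}^2$ (equal to $\tfrac12\brs{\tau}^4+\brs{\cW^{2,0+0,2}}^2$ in complex dimension two by Lemma \ref{lem:Dai14.10}) plus a divergence. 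Equating the two and letting $R\to\infty$ is what produces $0=\tfrac12\brs{\brs{\tau}}^4_{L^4}+\brs{\brs{\cW^{2,0+0,2}}}^2_{L^2}+\la\brs{\brs{\tau}}^2_{L^2}$; without this double computation your positivity claim is unsubstantiated.

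Second, your proposed error estimates do not close. The divergence terms that actually arise (from Lemma \ref{lem:sekigawadiv2} and the last line of \eqref{eq:ptwiseSekigawa}) are of the schematic form $\N\prs{\N\tau\ast\tau}$, so after cutting off they produce $\int \brs{\N\tau}\brs{\tau}\brs{\N\eta}$, which requires an a priori bound on $\brs{\brs{\N\tau}}_{L^2}$; your two schematic error integrals contain no $\N\tau$ factor at all. Moreover, the $L^2$ control of $\tau$ you propose to extract from Corollary \ref{cor:scalcurv} and $\varrho=4\la$ gives only $\brs{\tau}^2=2\la-\cR$, hence $\int_{A_R}\brs{\tau}^2\lesssim \la R^4+R^2\brs{\brs{\Rm}}_{L^2(A_R)}$; your terms $\tfrac{C}{R}\brs{\brs{\tau}}_{L^2(A_R)}$ and $\brs{\N\eta}^2$-weighted integrals then behave like $\sqrt{\la}\,R+1$ and $\la R^2+1$, which do not tend to zero for $\la>0$ and are merely bounded for $\la=0$. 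The missing ingredients are exactly the paper's Lemmata \ref{lem:|tau|by|Rm|} and \ref{lem:|Ntau|L2by|Rm|}: the pointwise bound $\brs{\tau}^2\le C\brs{\Rm}$ (obtained from $\lap_D\brs{J}^2=0$ and commuting derivatives on the harmonic form $\gw$), which yields $\brs{\brs{\tau}}_{L^4(A_R)}\le C\brs{\brs{\Rm}}^{1/2}_{L^2(A_R)}\to0$, and the weighted estimate $\brs{\brs{\N\tau}}^2_{L^2,\phi}\le C\prs{1+\brs{\brs{\Rm}}^{1/2}_{L^2}}\brs{\brs{\Rm}}^{3/2}_{L^2}$; only with these, combined with $\brs{\brs{\Rm}}^2_{L^2}\le C_0$ and $\Vol(\cB_R)\le C_0R^4$, do the cutoff errors vanish as $R\to\infty$. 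Supplying those two bounds is the real analytic content you would need to add.
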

\begin{rmk} While we cannot make a statement for static points with $\la < 0$, we improved on the result of \cite{ST} by upgrading `compactness' assumption to `completeness'. This requires breaking away from using the third Gray condition as in \cite{ST} and directly analyzing the evolution of the $L^2$-norm of torsion.
\end{rmk}
\begin{rmk}
Note that for $\la \geq 0$ we have that $\Rc \geq 0$, so by Bishop--Gromov volume comparison theorem $\lim_{R \to \infty}\tfrac{\Vol(B_R)}{R^4}$ is monotonically decreasing. In particular, the imposed volume growth condition \eqref{eq:naturalbds} in the statement of Theorem \ref{thm:rigidity} agrees with this.
\end{rmk}
\subsection{Reformulation of Sekigawa's formula}\label{s:sekigawa}
We examine the key ingredient to the proof of Proposition \ref{thm:rigidity} called \emph{Sekigawa's formula}, which was first featured in \cite{Sekigawa} and further explored in \cite{ADM}, \cite{Kirchberg}, etc. On any almost K\"{a}hler manifold, by Chern--Weil theory the first Pontrjagin class has two representatives: $\mathfrak{p}_1 \brk{D}$ and $\mathfrak{p}_1 \brk{\N}$. Their difference is exact so by Stokes theorem:
\begin{equation*}
\int_M \prs{\mathfrak{p}_1 \brk{D} - \mathfrak{p}_1 \brk{\N} } \w \gw^{\w \prs{n-2}} = 0.
\end{equation*}
Sekigawa explicitly computed each term and from this derived an integral formula which he used to address the case of $\cR \geq 0$ in the Goldberg Conjecture (stating `A compact almost K\"{a}hler Einstein manifold is K\"{a}hler' in \cite{Goldberg}). We state and utilize the pointwise version featured in \cite{ADM}.
\begin{customprop}{1 of \cite{ADM}}\label{prop:SekigawaInt} For any almost K\"{a}hler manifold $\prs{M,J,\gw}$,
\begin{align}
\begin{split}\label{eq:ptwiseSekigawa}
\lap \prs{\cR - \cR^{\star}}
&=  \tfrac{1}{8}\brs{\phi}^2 + \tfrac{1}{2} \brs{\lap_D \gw}^2 + 2\brs{\cW^{2,0+0,2}}^2- 2 \brs{ \Rc^{2,0+0,2}}^2 \\
&\hsp - 2 g^{ia} g^{jb} \prs{\lap_D \gw_{ab} + \tfrac{1}{2} \phi_{ab}} \prs{ J_i^s \Rc^{1,1}_{js} }\\
&\hsp  - 4 g^{pi}g^{qa}D_i \brk{ J_p^j D_a \brk{J_q^b \Rc_{jb}^{2,0+0,2}}} - 2g^{uv} D_u \brk{\prs{D_v\gw^{ij}} \cQ_{ij} },
\end{split}
\end{align}
where $\cR^{\star} \triangleq \tfrac{1}{2} \gw^{ji} \gw^{kl} \Rm_{ijkl}$ is star-scalar curvature, $\phi \prs{X,Y} \triangleq \ip{D_{JX} \gw, D_Y \gw}$, $\lap_D$ is the Levi-Civita rough Laplacian, and $\cW$ is the Weyl tensor. 
\end{customprop}
\noindent While computationally nontrivial, the proof is a natural analysis of Bochner formulas. Note that the constants of the statement are modified to match with our conventions, in contrast to the constants of  \cite{ADM}. To prove Proposition \ref{thm:rigidity}, we convert \eqref{eq:ptwiseSekigawa} into a more suitable format for our purposes.
\begin{cor}\label{cor:topropseki} Equivalently for any almost K\"{a}hler manifold $\prs{M,g,J,\gw}$,
\begin{align*}
-2 \ip{\cB,\Rc^{1,1}}
&=  \brs{ \Rc^{2,0+0,2}}^2 - \brs{\cW^{2,0+0,2}}^2 -  \tfrac{1}{4} \brs{\cP^{2,0+0,2} }^2 -  \tfrac{1}{4} \brs{\cB^2 }^2 -  \tfrac{1}{16} \brs{\cB^1}^2 \\
&\hsp - g^{uv} D_u \brk{\prs{D_v\gw^{ij}} \cQ_{ij} } - 2 g^{pi}g^{qa}D_i \brk{ J_p^j D_a \brk{J_q^b \Rc_{jb}^{2,0+0,2}}} + \lap \brs{\tau}^2.
\end{align*}
To prove Corollary \ref{cor:topropseki}, we need to take a moment to convert the necessary parts of the original formula.
\begin{customlemma}{3.4 of \cite{ST}}[\textbf{AK}]\label{lem:Chernterms} We have that $\lap_D \gw_{ij}
%&= - 2 J_e^w \prs{ \N_w \tau^e_{ij}} - 4 \gw^{ue}\tau_{iuw}\tau^w_{je}\\
= - \cP_{ij}^{2,0+0,2} - 4 \gw^{ue}\tau_{iuw}\tau^w_{je}$.
\begin{rmk}
We note that we can further manipulate the second term to obtain
\begin{align*}
 - 4 \gw^{ue}\tau_{iuw}\tau^w_{je} &=   4 J_i^y J_u^v \gw^{ue}\tau_{yvw}\tau^w_{je} &\mbox{Corollary \ref{cor:tausymmetries}}\\
&=  - 4 J_i^y g_{wm}g^{ve}\tau_{yv}^m\tau^w_{je} \\
&=  - J_i^y \cB_{yj}^2. &\mbox{Lemma \ref{lem:Bformulas}}
\end{align*}
Therefore we have that
\begin{equation}\label{eq:lapgw}
\lap_D \gw_{ij} = - \cP_{ij}^{2,0+0,2} - J_i^s \cB_{sj}^2.
\end{equation}
\end{rmk}
\begin{comment}
\begin{proof}
We compute
%
\begin{align*}
\lap_D \gw_{kl}&= g^{ij} D_i D_j \gw_{kl}\\
&=2 g^{ij} D_i \prs{\gw_{je} \tau^e_{kl}} &\mbox{Corollary \ref{cor:N=DJ}}\\
&=2 \prs{ g^{ij} \N_i \prs{\gw_{je} \tau^e_{kl}} -  g^{ij} \Theta_{ik}^y \gw_{je} \tau^e_{yl} -  g^{ij} \Theta_{il}^y \gw_{je} \tau^e_{ky} }&\mbox{$D = \N + \Theta$}\\ 
&=- 2 J_e^i \prs{ \N_i \tau^e_{kl}} + 2\prs{J^e_l \Theta_{ik}^y- J^e_k \Theta_{il}^y}\tau^i_{ye}&\mbox{combine, Lemma \ref{cor:tausymmetries}}
\end{align*}
%
The first quantity is type $\prs{2,0+0,2}$. For the second quantity, which is type $(1,1)$, we have
%
\begin{align*}
2\prs{J^e_l \Theta_{ik}^y- J^e_k \Theta_{il}^y} \tau^i_{ye}
&= 2 \prs{J^e_l g^{yu} \tau_{kui}\tau^i_{ye}-J^e_k g^{yu} \tau_{lu}^i\tau_{yei}}&\mbox{expand $\Theta$}\\
&= 2 \prs{-\gw^{eu}\tau_{kui}\tau^i_{el} + \gw^{eu}\tau_{lu}^i\tau_{eki}}&\mbox{combine}\\ 
&=- 4 \gw^{eu}\tau^i_{el}\tau_{kui}. &\mbox{simplify}
\end{align*}
%
Therefore
%
\begin{align}
\begin{split}\label{lem:Chernterms}
\lap_D \gw_{kl}  & =\prs{\lap_D \gw}^{1,1}_{kl} + \prs{\lap_D \gw}^{2,0 + 0,2}_{kl} =\brk{-4 \gw^{eu}\tau^i_{el}\tau_{kui}} + \brk{- 2 J_e^i \prs{ \N_i \tau^e_{kl}} }.
\end{split}
\end{align}
%
The result follows.
\end{proof}
\end{comment}
\end{customlemma}
\begin{lemma}\label{lem:phiid} We have that
\begin{equation*}
 \phi_{ij}=4\gw^{mk} g^{ld} \tau_{mli}\tau_{kdj} = J_i^v \cB_{vj}^1.
\end{equation*}
\begin{proof}
We have that
\begin{align*}
\phi_{ij}
&= 4 \prs{g^{se} \gw_{iy} \tau^y_{le} }J_s^p\prs{g^{ld} \gw_{jw} \tau^w_{pd} } &\mbox{Proposition \ref{cor:N=DJ}}\\
&=4 g^{ld} \tau^y_{le} \tau^w_{pd} \gw^{pe}\gw_{iy}  \gw_{jw} \\%&\mbox{rearrange/simplify} \\
&=4 g^{ld} \prs{J^y_u \tau^u_{lm} J_e^m}\prs{J^w_q\tau^q_{kd} J_p^k}\gw^{pe}\gw_{iy}  \gw_{jw} &\mbox{Corollary \ref{cor:tausymmetries}} \\
%&=4 g^{ld} \tau^u_{lm}\tau^q_{kd}\prs{ J_p^k\gw^{pe}J_e^m}\prs{ J^y_u\gw_{iy}}\prs{J^w_q \gw_{jw}} &\mbox{rearrange}\\
%&=4 g^{ld} \tau^u_{lm}\tau^q_{kd}\gw^{km}g_{ui}g_{qj} &\mbox{simplify}\\
&=4 g^{ld} \tau_{mli}\tau_{kdj}\gw^{mk}\\ %\mbox{rearrange}\\
 &= -4 g^{ld} J_m^u J_i^v\tau_{ulv}\tau_{kdj}\gw^{mk} &\mbox{Corollary \ref{cor:tausymmetries}} \\
&= 4 g^{ld} g^{uk} J_i^v\tau_{ulv}\tau_{kdj} \\%&\mbox{simplify}\\
&= J_i^v \cB_{vj}^1. &\mbox{Lemma \ref{lem:Bformulas}}
\end{align*}
The result follows.
\end{proof}
\end{lemma}
\begin{proof}[Proof of Corollary \ref{cor:topropseki}]
First, by tracing through Lemma \ref{lem:Omegatraced} we have that
\[
\cR - \cR^{\star} = -2 \brs{\tau}^2. 
\]
Lemma \ref{lem:phiid} deals with the first term on the right hand side of \eqref{eq:ptwiseSekigawa}. For the second term we decompose by types using \eqref{eq:lapgw}. For the second row of \eqref{eq:ptwiseSekigawa} we combine Lemma \ref{lem:phiid} and \eqref{eq:lapgw}:
\begin{align*}
\lap_D \gw_{ab} + \tfrac{1}{2} \phi_{ab} &= \prs{ - \cP_{ab}^{2,0+0,2} - J_a^s \cB_{sb}^2} + \tfrac{1}{2}J_a^s \cB_{sb}^1 = - \cP_{ab}^{2,0+0,2} + 2 J_a^s\cB_{sb}. &\mbox{\eqref{eq:cBforms}}
\end{align*}
Thus it follows that the second row of \eqref{eq:ptwiseSekigawa} is equal to precisely
\begin{align*}
- 2 g^{ia} g^{jb} \prs{\lap_D \gw_{ab} + \tfrac{1}{2} \phi_{ab}} \prs{ J_i^s \Rc^{1,1}_{js} } &= 4\ip{\Rc^{1,1}, \cB}.
\end{align*}
Rearranging and dividing through by a factor of $2$ yields the result.
\end{proof}
\end{cor}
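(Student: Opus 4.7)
The plan is to take Sekigawa's pointwise identity (Proposition 1 of \cite{ADM}) recorded as Proposition \ref{prop:SekigawaInt} above and systematically translate each Riemannian quantity into the Chern-connection language developed in Section \ref{s:background}. First, the left-hand side $\lap(\cR - \cR^{\star})$ reduces to a multiple of $\lap \brs{\tau}^2$: by tracing Lemma \ref{lem:Omegatraced} (together with Lemma \ref{lem:RmasOmega}) the Riemannian scalar $\cR$ and the star-scalar $\cR^{\star}$ are seen to differ by a constant multiple of $\brs{\tau}^2$, which accounts for the $\lap \brs{\tau}^2$ contribution in the target formula.

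The two geometric tensors on the right-hand side of Sekigawa are handled by dedicated lemmas recorded immediately below the statement. By Lemma \ref{lem:phiid}, $\phi_{ij} = J_i^v \cB^1_{vj}$, so since $J$ is a $g$-isometry one obtains $\brs{\phi}^2 = \brs{\cB^1}^2$, converting the $\tfrac{1}{8}\brs{\phi}^2$ summand into $\tfrac{1}{8}\brs{\cB^1}^2$ (and eventually $\tfrac{1}{16}\brs{\cB^1}^2$ after the final division by $2$). By \eqref{eq:lapgw} (Lemma \ref{lem:Chernterms}), $\lap_D \gw_{ij} = -\cP^{2,0+0,2}_{ij} - J_i^s \cB^2_{sj}$; the two summands are of types $(2,0+0,2)$ and $(1,1)$ respectively (using Corollary \ref{cor:tautauid} together with the observation that pre-multiplying a single index of a $(1,1)$-tensor by $J$ preserves the type), so by orthogonality of the two types one gets $\brs{\lap_D \gw}^2 = \brs{\cP^{2,0+0,2}}^2 + \brs{\cB^2}^2$, which handles the $\tfrac{1}{2}\brs{\lap_D \gw}^2$ summand.

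For the Hermitian-Ricci cross-term, combining the previous two identities together with the defining relation $\cB = \tfrac{1}{4}\cB^1 - \tfrac{1}{2}\cB^2$ yields the clean formula $\lap_D \gw_{ab} + \tfrac{1}{2}\phi_{ab} = - \cP^{2,0+0,2}_{ab} + 2 J_a^s \cB_{sb}$. Contracting this against $J_i^s \Rc^{1,1}_{js}$ through $g^{ia}g^{jb}$, the $\cP^{2,0+0,2}$ piece vanishes by type-orthogonality (the outer factor $J_i^s \Rc^{1,1}_{js}$ is itself of type $(1,1)$ in $(i,j)$), while the $J\cB$ piece collapses via the $J$-invariance identity $g^{ia} J_a^t J_i^s = g^{ts}$ to produce a constant multiple of $\ip{\cB,\Rc^{1,1}}$. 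Isolating this inner product on the left, dividing through by $2$, and carrying the two divergence terms on the third row of \eqref{eq:ptwiseSekigawa} as well as $\brs{\cW^{2,0+0,2}}^2$ and $\brs{\Rc^{2,0+0,2}}^2$ through unchanged yields the claimed identity.

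The main subtlety is the orchestration of signs and constants across the type decomposition, particularly verifying that the transformation $A_{ij} \mapsto J_i^s A_{js}$ preserves the $(1,1)$-class of the image (so that orthogonality annihilates the $\cP^{2,0+0,2}$ portion of the cross-term), and that the coefficients from $\cB = \tfrac{1}{4}\cB^1 - \tfrac{1}{2}\cB^2$ combine cleanly with the $\tfrac{1}{2}$ prefactor of $\phi$ in Sekigawa. Beyond this bookkeeping, the derivation is a mechanical substitution of lemmas already established in Section \ref{s:background}.
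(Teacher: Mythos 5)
Your proposal follows the paper's proof essentially verbatim: it translates Sekigawa's pointwise formula term by term using $\cR - \cR^{\star} = -2\brs{\tau}^2$, the identification $\phi_{ij} = J_i^v\cB^1_{vj}$ (Lemma \ref{lem:phiid}), the type decomposition $\lap_D \gw = -\cP^{2,0+0,2} - J\cB^2$ from \eqref{eq:lapgw} with orthogonality of types giving $\brs{\lap_D\gw}^2 = \brs{\cP^{2,0+0,2}}^2 + \brs{\cB^2}^2$, and the resulting combination $\lap_D\gw + \tfrac{1}{2}\phi = -\cP^{2,0+0,2} + 2J\cB$ to collapse the Ricci cross-term to a multiple of $\ip{\cB,\Rc^{1,1}}$, before rearranging and dividing by $2$. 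This is the same approach as the paper's proof, so nothing further is needed.
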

We next express a term in Corollary \ref{cor:topropseki} using Chern quantities to prepare to prove Proposition \ref{thm:rigidity}.
\begin{lemma}\label{lem:sekigawadiv2} We have that
\begin{align*}
\tfrac{1}{2} g^{uv} D_u \brk{\prs{D_v\gw^{ij}} \cQ_{ij} }  &= - 2 g^{rs}g^{ja} \N_u \brk{\tau_{as}^u\prs{    \N_m \tau_{rj}^m }}.
\end{align*}
\begin{proof}
First observe that
\begin{align*}
D_v \gw^{ij} = g^{ib} g^{ja} \prs{ D_v \gw_{ab}} &= 2 g^{ib} g^{ja} \gw_{vs} \tau^s_{ab} &\mbox{Proposition \ref{cor:N=DJ}} \\
&=2 g^{ib} g^{ja} J_v^e \tau_{abe}.%&\mbox{lower $\tau$ index}
\end{align*}
Therefore we have that
\begin{align*}
\tfrac{1}{2} \prs{D_v\gw^{ij}} \cQ_{ij}  &= g^{ib} g^{ja} J_v^e \tau_{abe} \cP^{2,0+0,2}_{ij} \\
&= 2 g^{ib} g^{ja} J_v^e \tau_{abe} \gw^{mn}\prs{ \N_m \tau_{ijn} } &\mbox{Lemma \ref{lem:P2,0+0,2}} \\
&=2 g^{ib} g^{ja} J_v^e\prs{-J_e^w J_b^s  \tau_{asw}} \gw^{mn}\prs{ - J_i^r J_n^l \N_m \tau_{rjl} } &\mbox{Corollary \ref{cor:tausymmetries}} \\
%&=2 \prs{J_i^rg^{ib} J_b^s }g^{ja}\prs{J_v^e J_e^w}\tau_{asw}\prs{J_n^l \gw^{mn}}\prs{    \N_m \tau_{rjl} }\\
&= - 2 g^{rs}g^{ja}\tau_{asv}\prs{    \N_m \tau_{rj}^m }.
\end{align*}
Differentiating once more and contracting an index yields the result.
%Therefore we have that
%%
%\begin{align*}
%\tfrac{1}{2} g^{uv} D_u \brk{\prs{D_v\gw^{ij}} \cQ_{ij}} &= - 2g^{uv} g^{rs}g^{ja} \N_u \brk{\tau_{asv}\prs{    \N_m \tau_{rj}^m }},
%%&= - 2 g^{rs}g^{ja}\prs{ \N_u \tau_{as}^u }\prs{    \N_m \tau_{rj}^m }  - 2 g^{rs}g^{ja} \tau_{as}^u \prs{ \N_u \N_m \tau_{rj}^m }.
%\end{align*}
%%
%yielding the result.
\end{proof}
\end{lemma}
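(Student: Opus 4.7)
The plan is to reduce the left-hand side by working one derivative at a time. First I would rewrite the inner derivative: raising indices in $D_{\mathbf{i}}\gw_{\mathbf{jk}} = \tfrac{1}{4}\gw_{\mathbf{i}p} N^p_{\mathbf{jk}}$ of Proposition \ref{cor:N=DJ} (together with the identification $N = 8\tau$ from Theorem \ref{thm:Nid}) yields $D_v\gw^{ij} = 2g^{ib}g^{ja}J_v^e\tau_{abe}$. The resulting tensor is automatically type $(2,0+0,2)$ in the free pair $(i,j)$, since it inherits the type of $\tau$ in $(a,b)$ through the $g^{-1}$ raisings, so contracting with $\cQ_{ij}$ picks out only its $(2,0+0,2)$ projection.

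Next I would verify that $\cQ^{2,0+0,2}_{ij} = \cP^{2,0+0,2}_{ij}$. Writing $\cQ_{ij} - \cP_{ij} = \gw^{ab}(\Rm_{ijab} - \Omega_{ijab})$ and inserting Lemma \ref{lem:RmasOmega}, the correction is a sum of two types of terms: first, pieces of the form $\gw^{ab}\N_{\cdot}\tau_{ab\cdot}$, which vanish via $\gw^{ab}\tau_{abc} = 0$ from Lemma \ref{lem:tracelesstau} together with $\N\gw = 0$; second, quadratic $\tau$-expressions which are type $(1,1)$ in $(i,j)$ by Corollary \ref{cor:tautauid} and hence die under the $(2,0+0,2)$ projection. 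Lemma \ref{lem:P2,0+0,2} then substitutes the remaining $\cP^{2,0+0,2}_{ij} = 2\gw^{mn}(\N_m\tau_{ijn})$.

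Continuing, I would use Corollary \ref{cor:tausymmetries} twice to absorb the surviving $J_v^e$ into $\tau_{abe}$ and the $J$'s that arise when converting $\gw^{mn}$ back to $g^{-1}$ via \eqref{eq:identitylist} into $\N_m\tau_{ijn}$. After this bookkeeping the mixed $g$--$\gw$ contractions collapse to the compact form $\tfrac{1}{2}(D_v\gw^{ij})\cQ_{ij} = -2g^{rs}g^{ja}\tau_{asv}(\N_m\tau_{rj}^m)$.

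The last step is to apply the outer $g^{uv}D_u$ and migrate the $g^{uv}$ inside, raising the lowered $v$ on $\tau_{asv}$ to a contravariant $u$ on $\tau_{as}^u$ via $Dg = 0$. To convert the resulting Levi--Civita divergence into the Chern divergence I would expand $D = \N + \Theta$ from \eqref{eq:Cherndef}, use Corollary \ref{cor:Thetaggtau} to write each $\Theta$ in terms of $\tau$, and observe that the leading trace $\Theta^u_{uk}$ vanishes since it reduces to a trace of $\tau$ killed by Lemma \ref{lem:tracelesstau}; the remaining $\Theta$-contributions then cancel under the symmetric contraction $g^{rs}g^{ja}$ using torsion cyclicity (Proposition \ref{prop:taucyclic}) together with the pairwise skew symmetries of Corollary \ref{cor:tausymmetries}. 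The main obstacle I anticipate is this final step --- carefully organizing the cancellation of Levi--Civita versus Chern divergence discrepancies against the symmetric contractions --- whereas the intermediate type-reduction steps are essentially bookkeeping once the correct dualities are in place.
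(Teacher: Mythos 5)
Your proposal is correct and follows essentially the same route as the paper's proof: compute $D_v\gw^{ij} = 2g^{ib}g^{ja}J_v^e\tau_{abe}$, use type orthogonality together with $\cQ^{2,0+0,2}=\cP^{2,0+0,2}$ and Lemma \ref{lem:P2,0+0,2} to reduce the inner contraction to $-2g^{rs}g^{ja}\tau_{asv}\prs{\N_m\tau_{rj}^m}$, and then differentiate and convert the divergence; the justifications you supply for the two steps the paper leaves implicit are sound. One small remark on your final step: since the bracketed quantity is a genuine one-form in $v$, the only discrepancy between its Levi--Civita and Chern divergences is the single trace $g^{uv}\Theta_{uv}^d$, which vanishes because it is a trace of $\tau$ (Lemma \ref{lem:tracelesstau} with Proposition \ref{prop:taucyclic}), so there are no further $\Theta$-contributions needing the cancellation argument you anticipate.
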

\begin{proof}[Proof of Proposition \ref{thm:rigidity}] Since we are working on a setting that is potentially noncompact but complete, we let $\eta = \eta_R$ be a cutoff function satisfying
\begin{equation}\label{eq:etaassump}
\supp \eta = B_{2R}, \qquad \eta \equiv 1 \text{ on }B_R, \qquad \text{ thus } \brs{\N \eta} \leq \tfrac{C}{R}.
\end{equation}
%
%First we have that
%%
%\begin{align*}
%\tfrac{\del}{\del t} \brk{\dVg} &= \tfrac{1}{2} \prs{g^{ij} \tfrac{\del g_{ij}}{\del t}} \dVg \\
%&=  \varrho \dVg \\
%&= \gw^{ji} \cP_{ij} \dVg \\
%&= - \gw^{ji} \prs{\la\gw_{ij}} \dVg \\
%&= - \la \dVg.
%\end{align*}
%
We will denote the weighted $L^2$ norm using our cuttoff function (to sufficiently high power) by
\begin{equation}\label{eq:bumpstuff}
\dVe \triangleq \eta^p \dVg \qquad \brs{\brs{f}}_{L^2, \eta} \triangleq \int_M \brs{f}^2 \dVe.
\end{equation}
First we have that
\begin{equation}\label{eq:int|tau|2}
\tfrac{\del}{\del t}\brk{ \int_M \brs{\tau}^2 \dVe } = \int_M \tfrac{\del \brs{\tau}^2}{\del t} \dVe + \int_M \brs{\tau}^2 \tfrac{\del}{\del t}\brk{\dVe}.
\end{equation}
Since $\prs{M,g_t,J_t}$ is a static structure, it follows that $\Rc^{2,0+0,2} \equiv 0$, and $\cP^{2,0+0,2} \equiv 0$ (cf. Lemma 9.2 of \cite{ST}). Using Lemma \ref{lem:normtauvar}, and applying the fact that $\frac{\del J}{\del t} \equiv 0$ for static structures.
\begin{align*}
\tfrac{\del}{\del t} \brs{\tau}^2 &= \ip{\tfrac{\del g}{\del t}, \cB} - \tfrac{1}{2} \brs{ \tfrac{\del J}{\del t} }^2 +2 \N_p \brk{g^{ap}  \gw^{bw}\tau_{bae} \tfrac{\del J_w^e}{\del t}}\\
&= \ip{- \la g, \cB}\\ %&\mbox{$\tfrac{\del J}{\del t} \equiv 0$}\\
&= \la \brs{\tau}^2. &\mbox{Corollary \ref{cor:Btraces}}
\end{align*}
%
%Furthermore using Proposition \ref{prop:RChernconn},
%\begin{align*}
%\tfrac{\del \cR}{\del t} &=- \ip{ \tfrac{\del g}{\del t}, \Rc} - \lap \prs{\tr_g \tfrac{\del g}{\del t}} +   g^{ip} g^{jq}\prs{ \N_j \N_i \tfrac{\del g_{pq}}{\del t}}\\
%&\hsp - g^{dw}g^{qj}\N_j \brk{\tfrac{\del g_{pd}}{\del t} \tau_{qw}^p } \\
%&=\la \ip{ g, \Rc} + \la \lap \prs{\tr_g g} - \la   g^{ip} g^{jq}\prs{ \N_j \N_i g_{pq}}+ \la g^{dw}g^{qj}\N_j \brk{\tau_{qwd} }&\mbox{$\tfrac{\del g}{\del t} = - \la g$} \\
%&=\la \cR &\mbox{$\N g = 0$, Lemma \ref{lem:tracelesstau}}\\
%&=\la \prs{\tfrac{1}{2} \varrho - \brs{\tau}^2 }\\
%&=\la \prs{- \tfrac{1}{2} \la - \brs{\tau}^2 } &\mbox{$\varrho = - \la$} \\
%&=- \tfrac{1}{2} \la^2 - \la \brs{\tau}^2. &\mbox{expand}
%\end{align*}
%%
So updating \eqref{eq:int|tau|2},
\begin{align*}
\la \brs{\brs{ \tau }}^2_{L^2,\eta} + \int_M \brs{\tau}^2 \tfrac{\del}{\del t}\brk{\dVe} &= \tfrac{\del}{\del t} \brs{\brs{ \tau }}^2_{L^2,\eta}.
\end{align*}
We will examine the left hand side an alternate way. We have
\begin{align*}
-\tfrac{\del}{\del t} \brs{\tau}^2
&= 2 \ip{\Rc, \cB} - 2 \brs{\cB}^2 + \tfrac{1}{2} \brs{\tfrac{\del J}{\del t}}^2 + 8 \N_p \brk{g^{ap} g^{ub} \tau_{bae} g^{mr}\prs{\N_r \tau_{mu}^e}} &\mbox{Lemma \ref{lem:normtauvar}}\\
%%%%%%
&=  \tfrac{1}{16}\brs{\cB^1}^2 + \tfrac{1}{4} \brs{\cB^2}^2 - 2 \brs{\cB}^2  + \brs{\cW^{2,0+0,2}}^2\\
&\hsp +\tfrac{1}{2} \lap \brs{\tau}^2 + \N_u \brk{- 2g^{uv} g^{rs}g^{ja} \tau_{asv}\prs{    \N_m \tau_{rj}^m }}. &\mbox{Corollary \ref{cor:topropseki}, Lemma \ref{lem:sekigawadiv2}}
\end{align*}
We observe that, updating our constant $C$ at each step,
\begin{align*}
\int_M \N \prs{\N \tau \ast \tau} \eta^2 \dVg &\leq C \int_M \brs{\N \tau}\brs{\tau} \brs{\N \eta} \eta \dVg \\
&\leq C \brs{\brs{\tau}}_{L^4(A_R)} \brs{\brs{\N \tau}}_{L^2(A_R)} \brs{\brs{ \N \eta }}_{L^4(A_R)} &\mbox{H\"{o}lder's inequality}\\
&\leq C \brs{\brs{\tau}}_{L^4(A_R)} \brs{\brs{\N \tau}}_{L^2(A_R)} \prs{ \prs{\tfrac{C}{R} }^4 \Vol \prs{A_R} }^{1/4} &\mbox{\eqref{eq:etaassump}}\\
&\leq C \brs{\brs{\tau}}_{L^4(A_R)} \brs{\brs{\N \tau}}_{L^2(A_R)} &\mbox{\eqref{eq:naturalbds}} \\
&\leq C \brs{\brs{\Rm}}_{L^2(A_R)} &\mbox{Lemmata \ref{lem:|tau|by|Rm|},  \ref{lem:|Ntau|L2by|Rm|}}
\end{align*}
We clarify the usage of Lemma \ref{lem:|Ntau|L2by|Rm|}: take a partition of unity  $\sqg{\phi_i}$ covering $A_R$, and apply this lemma to each region designated by $\phi_i$. Taking the sum over the partition yields the desired estimate. Note by the assumed finiteness of $\brs{\brs{\Rm}}_{L^2}$ that $\lim_{R \to \infty} \brs{\brs{\Rm}}_{L^2 \prs{A_R}} \equiv 0$, thus this above quantity is essentially negligible. We observe that if $\dim_{\mathbb{C}} M =2$, then using \eqref{eq:Btermstau4} yields
\begin{align*}
\tfrac{1}{16}&\brs{\cB^1}^2 + \tfrac{1}{4} \brs{\cB^2}^2 - 2 \brs{\cB}^2
%&=\tfrac{1}{16}\brs{\cB^1}^2 + \tfrac{1}{4} \brs{\cB^2}^2 - 2 \brs{\tfrac{1}{4} \cB^1 - \tfrac{1}{2} \cB^2}^2 &\mbox{\eqref{eq:cBforms}}\\
%&=\tfrac{1}{16}\brs{\cB^1}^2 + \tfrac{1}{4} \brs{\cB^2}^2
% - 2\prs{\tfrac{1}{16} \brs{\cB^1}^2 - \tfrac{1}{4} \ip{\cB^1, \cB^2} + \tfrac{1}{4} \brs{\cB^2}^2} &\mbox{expand}\\
%%&=\tfrac{1}{16}\brs{\cB^1}^2 + \tfrac{1}{4} \brs{\cB^2}^2 -\tfrac{1}{8} \brs{\cB^1}^2 + \tfrac{1}{2} \ip{\cB^1, \cB^2} - \tfrac{1}{2} \brs{\cB^2}^2\\
%  &=-\tfrac{1}{16}\brs{\cB^1}^2 - \tfrac{1}{4} \brs{\cB^2}^2
% + \tfrac{1}{2} \ip{\cB^1, \cB^2} &\mbox{simplify} \\
%&=- \tfrac{1}{2} \brs{\tau}^4 - \brs{\tau}^4 + 2 \brs{\tau}^4 &\mbox{\eqref{eq:Btermstau4}}\\
= \tfrac{1}{2} \brs{\tau}^4.% &\mbox{simplify}
\end{align*}
It follows that
\begin{align*}
-\la \brs{\brs{ \tau }}^2_{L^2,\eta}& - \int_M \brs{\tau}^2 \tfrac{\del}{\del t}\brk{\dVe}\\ &= \tfrac{1}{2} \brs{\brs{\tau}}^4_{L^4,\eta} + \brs{\brs{\cW^{2,0+0,2}}}^2_{L^2,\eta}
+\int_M \N \prs{\N \tau \ast \tau} \dVe   - \int_M \brs{\tau}^2 \tfrac{\del}{\del t}\brk{\dVe}.
\end{align*}
Sending $R \to \infty$ and rearranging and simplifying yields that
\begin{align*}
0 &= \tfrac{1}{2} \brs{\brs{\tau}}^4_{L^4} + \brs{\brs{\cW^{2,0+0,2}}}^2_{L^2}+ \la \brs{\brs{ \tau }}^2_{L^2}.
\end{align*}
 Thus $\tau \equiv N \equiv 0$, and thus $M^4$ is K\"{a}hler. Furthermore, $\cB = 0$ and thus
 \begin{align*}
 - \la g &\equiv \tfrac{\del g}{\del t} = -2 \Rc + 2 \cB = -2 \Rc.
 \end{align*}
Thus the only static structures with $\la \geq 0$ of symplectic curvature flow are K\"{a}hler--Einstein.\end{proof}
\subsection{Bounds on torsion by curvature}
We provide Chern torsion bounds used for Proposition \ref{thm:rigidity}.
\begin{lemma}\label{lem:|tau|by|Rm|}
For an almost K\"{a}hler manifold, we have that
\begin{align*}
\sup_M \brs{\tau}^2 \leq C_{\dim_{\mathbb{C}}M } \brs{\brs{ \Rm }}_{C^0 \prs{g}}.
\end{align*}
\begin{proof}% Recall that $\cR^{\star} = \tfrac{1}{2} \gw^{ji} \gw^{kl} \Rm_{ijkl}$. This follows immediately by the fact that $2 \brs{\tau}^2 = \prs{\cR - \cR^*}$ (which can be obtained by tracing through Lemma \ref{lem:Omegatraced}).
We consider $\tau$ in the form of $DJ$ via Corollary \ref{cor:N=DJ}. Observe that
\begin{align}
\begin{split}\label{eq:lapD1}
0 = \lap_D 1 &= \lap_D \brs{J}^2 \\
&=2  \ip{\lap_D J, J} +  2 \brs{D J}^2\\ %&\mbox{distribute}\\
&=2  \ip{\lap_D J, J} +  8 \brs{\tau }^2. %&\mbox{Corollary \ref{cor:N=DJ}}
\end{split}
\end{align}
For the first term we compute out, using that $\gw$ is harmonic (thus $D \gw \equiv 0$, $^* \gw = 0$),
\begin{align}
\begin{split}\label{eq:lapDJ}
\lap_D J_a^b = g^{ij} D_i D_j J_a^b
&= g^{bc}g^{ij} D_i D_j \gw_{ac} \\
&=- g^{bc}g^{ij} D_i \prs{ D_a \gw_{cj} + D_c \gw_{ja} } \\
&=- g^{bc}g^{ij} \prs{ \brk{D_i , D_a} \gw_{cj} + \brk{D_i , D_c} \gw_{ja} } \\
&=- g^{bc}g^{ij} \prs{ -\Rm_{iac}^d \gw_{dj} - \Rm_{iaj}^d \gw_{cd} - \Rm_{icj}^d \gw_{da} - \Rm_{ica}^d \gw_{jd} } \\
%&=   \Rm_{iace} g^{ed} \gw_{dj}g^{bc}g^{ij} - \Rc_{ae}  g^{ed}  \gw_{cd}g^{bc} - \Rc_{ce} g^{ed}  \gw_{da}g^{bc}+ \Rm_{icae} g^{ed}  \gw_{jd}g^{bc}g^{ij} &\mbox{expand} \\
&=   \Rm_{iae}^b \gw^{ei} + \Rc_{a}^d  J_d^b+ \Rc_{e}^bJ_a^e+ \Rm_{aei}^b \gw^{ei}.
\end{split}
\end{align}
Therefore we have that
\begin{align*}
\ip{\lap_D J, J}  = g_{bu} g^{av} J_v^u\prs{\lap_D J_a^b} 
&= - J_b^a \prs{\lap_D J_a^b} \\
&=  - J_b^a \Rm_{iae}^b \gw^{ei}  - J_b^a \Rc_{a}^d  J_d^b - J_b^a\Rc_{e}^bJ_a^e - J_b^a \Rm_{aei}^b \gw^{ei} \\
&= 2 J_b^a \Rm_{aie}^b \gw^{ei}  + 2 \cR.
\end{align*}
Applying this to \eqref{eq:lapD1} and manipulating yields that
\begin{align*}
 \brs{\tau}^2 &= \tfrac{1}{2} J_b^a \Rm_{aic}^b \gw^{ei} +\tfrac{1}{2} \cR,
\end{align*}
from which the result follows.
\end{proof}
\end{lemma}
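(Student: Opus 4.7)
The plan is to derive a Bochner-type identity exploiting the fact that $\brs{J}^2$ is a universal constant on any almost Hermitian manifold. Since $J_b^a J_a^b = -2n$ (with $2n = \dim_{\mathbb{R}} M$), applying the Levi--Civita rough Laplacian gives $\lap_D \brs{J}^2 \equiv 0$, which upon expansion yields
\begin{equation*}
0 = 2\ip{\lap_D J, J} + 2\brs{DJ}^2.
\end{equation*}
By Proposition \ref{cor:N=DJ} combined with Theorem \ref{thm:Nid}, the gradient term equals a positive universal multiple of $\brs{\tau}^2$, so it suffices to estimate $\ip{\lap_D J, J}$ pointwise by $\brs{\brs{\Rm}}_{C^0}$.

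To handle the first term, I would write $J_a^b = g^{bc}\gw_{ac}$ and compute $\lap_D J$ by applying $\lap_D$ to $\gw$. Here the almost K\"{a}hler hypothesis \eqref{eq:AK} is crucial, since $d\gw = 0$ permits the substitution $D_j \gw_{ac} = -D_a \gw_{cj} - D_c \gw_{ja}$ inside the Laplacian. After one additional covariant derivative, every remaining second-derivative term on $\gw$ can be rearranged as a commutator $\brk{D_i, D_a} \gw_{cj}$ (or its analogue), which is purely algebraic in the Riemann tensor contracted against $\gw$; no stray second derivatives of $\gw$ survive.

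Contracting the resulting expression for $\lap_D J_a^b$ with $-J_b^a$ and applying the standard $\Rm$ symmetries produces a sum of the Riemannian scalar curvature $\cR$ and a single $\Rm$-$J$-$\gw$ trace. Both summands are pointwise bounded by $C_n \brs{\brs{\Rm}}_{C^0(g)}$, since $\brs{J}$ and $\brs{\gw}$ are universal constants on an almost Hermitian manifold. Substituting this back into the Bochner identity above and solving for $\brs{\tau}^2$ yields the stated inequality. I expect no essential obstacle beyond careful sign bookkeeping in the commutator expansion and verifying that the second-derivative terms in $\gw$ do indeed cancel once $d\gw = 0$ is invoked symmetrically in both derivatives.
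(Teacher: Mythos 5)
Your proposal is essentially the paper's own proof: the Bochner identity $0=\lap_D\brs{J}^2=2\ip{\lap_D J,J}+2\brs{DJ}^2$, the substitution $D_j\gw_{ac}=-D_a\gw_{cj}-D_c\gw_{ja}$ from $d\gw=0$, conversion of the surviving second derivatives into commutators yielding $\Rm$-traces, and the pointwise bound after contracting with $J$. The only point to make explicit is that killing the non-commutator terms $D_aD_i\gw_{cj}$, $D_cD_i\gw_{ja}$ uses the co-closedness of $\gw$ (the trace $g^{ij}D_i\gw_{cj}=0$, equivalently harmonicity of $\gw$ on an almost K\"ahler manifold), not closedness alone, exactly as the paper notes.
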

\begin{lemma}\label{lem:|Ntau|L2by|Rm|} Assume the bounds of \eqref{eq:naturalbds}. Suppose that $\phi = \phi_R$ is a cutoff function satisfying
\begin{equation}\label{eq:phiassump}
\supp \phi = B_{2R}, \qquad \phi \equiv 1 \text{ on }B_R,
\end{equation}
There exists some $C_{\dim_{\mathbb{C}} M} > 0$ such that
\begin{align*}
\brs{\brs{\N \tau}}_{L^2, \phi}^2 \leq C_{\dim_{\mathbb{C}}M} \prs{ \brs{\brs{\Rm}}^{1/2}_{L^2, \phi}   + 1} \brs{\brs{\Rm}}_{L^2, \phi}^{3/2}.
\end{align*}
\begin{proof}We first observe that
\begin{align*}
\N \tau &= D \tau + \Theta \ast \tau &\mbox{\eqref{eq:Cherndef}}\\
&= - \tfrac{1}{2} D \prs{J DJ} + \tau^{\ast 2} &\mbox{\eqref{cor:1.5}}\\
&= J \prs{D D J} + \prs{DJ}^{\ast 2} +  \tau^{\ast 2}.
\end{align*}
\noindent Keeping Proposition \ref{cor:N=DJ} in mind, it is sufficient to focus on analyzing the behavior of the first term. We compute out, first with pointwise terms,
\begin{align*}
\brs{D D J}^2 &= g_{ad}g^{bc}g^{im} g^{jn}\prs{D_i D_j J_b^a}\prs{D_m D_n J_c^d} \\
&= -g^{im} g^{jn}\prs{D_i D_j J_d^c }\prs{D_m D_n J_c^d} \\
&= -g^{im} g^{jn}\prs{\brk{D_i, D_j} J_d^c }\prs{D_m D_n J_c^d}  -g^{im} g^{jn}\prs{D_j D_i J_d^c }\prs{D_m D_n J_c^d} \\
&= A^1 + A^2.
\end{align*}
For the first term we integrate against $\dVf$, which is defined in the same manner of $\dVe$ of the proof of Proposition \ref{thm:rigidity}, (cf. \eqref{eq:etaassump}). Within this, we compute the commutator terms and apply the weighted  H\"{o}lder's inequality
\begin{align*}
\int_M A^1 \dVf &= -\int_M g^{im} g^{jn}\prs{\brk{D_i, D_j} J_d^c }\prs{D_m D_n J_c^d} \dVf\\
 &=\int_M g^{im} g^{jn}\prs{\Rm_{ijd}^s J_s^c - \Rm_{ijs}^c J_d^s }\prs{D_m D_n J_c^d} \dVf \\% &\mbox{commutation} \\
  &\leq C \int_M \brs{ \Rm \ast DDJ } \dVf \\%&\mbox{simplify} \\
&\leq \tfrac{C}{\ge} \brs{\brs{\Rm}}^2_{L^2, \phi} + \ge\brs{ \brs{D D J}}^2_{L^2, \phi}. %&\mbox{H\"{o}lder's inequality}
\end{align*}
We then integrate the second term, perform integration by parts and then Levi-Civita derivatives.
\begin{align*}
\int_M A_2 \dVf %&= -\int_M g^{im} g^{jn}\prs{D_j D_i J_d^c }\prs{D_m D_n J_c^d} \dVe \\
&= \int_M g^{im} g^{jn}\prs{D_i J_d^c }\prs{D_j D_m D_n J_c^d} \dVf \\
&\hsp +\int_M g^{im} g^{jn}\prs{D_i J_d^c }\prs{D_m D_n J_c^d} \prs{D_j\eta} \dVf \\%&\mbox{integrate by parts}\\
&= \int_M g^{im} g^{jn}\prs{D_i J_d^c }\prs{\brk{D_j ,D_m} D_n J_c^d} \dVf\\
&\hsp +  \int_M g^{im}\prs{D_i J_d^c }\prs{D_m \lap_D J_c^d} \dVf\\
&\hsp +\int_M g^{im} g^{jn}\prs{D_i J_d^c }\prs{D_m D_n J_c^d} \prs{D_j\phi} \dVf  \\ %&\mbox{commute $D$'s}\\
&= A_{21} + A_{22} + A_{23}.
\end{align*}
For the first term of $A_2$ we have
\begin{align*}
A_{21} %&= \int_M g^{im} g^{jn}\prs{D_i J_d^c }\prs{\brk{D_j ,D_m} D_n J_c^d} \dVe \\
%&=-  \int_M g^{im} g^{jn}\prs{D_i J_d^c }\prs{\Rm_{jmn}^s \prs{D_s J_c^d}+ \Rm_{jmc}^s \prs{D_n J_s^d} - \Rm_{jms}^d \prs{D_n J_c^s}} \dVe \\
&\leq C  \int_M \brs{\prs{DJ}^{\ast 2} \ast \Rm} \dVf \\%&\mbox{commutation formula}\\
&\leq C \int_M \brs{\tau}^2 \brs{\Rm} \dVf &\mbox{Proposition \ref{cor:N=DJ}} \\
&\leq C \brs{\brs{\Rm}}^2_{L^2, \phi}. &\mbox{Lemma \ref{lem:|tau|by|Rm|}}
\end{align*}
Next we compute $A_{22}$, applying integration by parts followed by a H\"{o}lder's inequality
\begin{align*}
A_{22} 
&= - \int_M \prs{\lap_D J_d^c }\prs{\lap_D J_c^d} \phi \dVf \\
&\hsp- \int_M g^{im}\prs{D_i J_d^c }\prs{\lap_D J_c^d} \prs{D_m\phi} \dVf \\
&\leq C \brs{\brs{\lap_D J }}_{L^2, \phi}^2 + C \int_M \brs{DJ}\brs{\lap_D J} \brs{D \phi} \dVf \\
&\leq C \brs{\brs{\Rm}}_{L^2, \phi}^2 + C \int_M \brs{\tau}\brs{\Rm} \brs{D \phi} \dVf &\mbox{\eqref{eq:lapDJ}, Proposition \ref{cor:N=DJ}}\\
&\leq C \brs{\brs{\Rm}}_{L^2, \phi}^2 + C \brs{\brs{\tau}}_{L^4(A_R)} \brs{\brs{\Rm}}_{L^2(A_R)} \brs{\brs{D \phi}}_{L^4(A_R)} \\
&\leq C  \brs{\brs{\Rm}}_{L^2, \phi}^2 + C\brs{\brs{\tau}}_{L^4(A_R)} \brs{\brs{\Rm}}_{L^2(A_R)} \prs{ \prs{\tfrac{C_0}{R} }^4 \Vol \prs{A_R} }^{1/4} &\mbox{\eqref{eq:phiassump},\eqref{eq:naturalbds}}\\
&\leq C  \brs{\brs{\Rm}}_{L^2, \phi}^2 + C \brs{\brs{\Rm}}_{L^2(A_R)}^{3/2}. &\mbox{Lemma \ref{lem:|tau|by|Rm|}}
\end{align*}
Lastly we have, applying H\"{o}lder's inequality twice
\begin{align*}
A_{23} &\leq C \int_M \brs{\prs{D J} \ast \prs{DDJ} \ast D\phi }\dVe \\ %&\mbox{simplify} \\
&\leq C \brs{\brs{\tau}}_{L^4(A_R)} \brs{\brs{DDJ}}_{L^2(A_R)} \brs{\brs{D \phi}}_{L^4(A_R)}\\
&\leq C \brs{\brs{\tau}}_{L^4(A_R)} \brs{\brs{DDJ}}_{L^2(A_R)} \prs{ \prs{\tfrac{C_0}{R} }^4 \Vol \prs{A_R} }^{1/4} &\mbox{\eqref{eq:phiassump},\eqref{eq:naturalbds}}\\
&\leq \tfrac{C}{\ge} \brs{\brs{\tau}}_{L^4(A_R)}^2 +  \ge \brs{\brs{DDJ}}_{L^2(A_R)}^2 \\
&\leq \tfrac{C}{\ge} \brs{\brs{\Rm}}_{L^2(A_R)} +  \ge \brs{\brs{DDJ}}_{L^2(A_R)}^2. &\mbox{Lemma \ref{lem:|tau|by|Rm|}}
\end{align*}
Combining these various estimates together it follows that
\begin{align*}
\brs{\brs{D D J}}^2_{L^2, \phi} &\leq \tfrac{C}{\ge} \brs{\brs{\Rm}}^2_{L^2, \phi} + \ge\brs{ \brs{D D J}}^2_{L^2, \phi} + C \brs{\brs{\Rm}}^2_{L^2, \phi} + C  \brs{\brs{\Rm}}_{L^2, \phi}^2 + C \brs{\brs{\Rm}}_{L^2(A_R)}^{3/2} \\
&\hsp + \tfrac{C}{\ge} \brs{\brs{\Rm}}_{L^2(A_R)} +  \ge \brs{\brs{DDJ}}_{L^2(A_R)}^2 \\
&\leq C \prs{ \brs{\brs{\Rm}}^{1/2}_{L^2, \phi}   + 1} \brs{\brs{\Rm}}_{L^2, \phi}^{3/2} + 2\ge\brs{ \brs{D D J}}^2_{L^2, \phi}.
\end{align*}
The result follows.
\end{proof}
\end{lemma}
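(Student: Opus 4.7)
The plan is to work in Levi-Civita terms and reduce everything to estimating $\brs{\brs{DDJ}}_{L^2,\phi}^2$. Because $\N = D - \Theta$ with $\Theta$ proportional to $\tau$ (Corollary \ref{cor:Thetaggtau} combined with Theorem \ref{thm:Nid}), and $\tau$ itself proportional to $DJ$ (Proposition \ref{cor:N=DJ}), a schematic expansion gives $\N\tau \sim D(DJ) + \tau^{\otimes 2}$. Thus pointwise $\brs{\N\tau}^2 \leq C\brs{DDJ}^2 + C\brs{\tau}^4$. The quartic term is already dispatched by Lemma \ref{lem:|tau|by|Rm|}: it yields $\int \brs{\tau}^4 \phi^p \dVg \leq C \brs{\brs{\Rm}}_{L^2,\phi}^2$, which is compatible with the claimed bound. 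All subsequent work is devoted to controlling $\brs{\brs{DDJ}}_{L^2,\phi}^2$.

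Next I would expand $\brs{DDJ}^2 = g^{im}g^{jn}(D_iD_jJ_b^a)(D_mD_nJ_a^b)$ and split the inner covariant derivatives via $D_iD_j = D_jD_i + [D_i,D_j]$. The commutator piece produces a term of the form $\Rm \ast J$ which, being paired against $DDJ$ and integrated, is bounded by $\brs{\brs{\Rm}}_{L^2,\phi}\brs{\brs{DDJ}}_{L^2,\phi}$ by Cauchy-Schwarz and absorbed by a Peter-Paul inequality. For the remaining piece with $D_jD_i$ in symmetric position I would integrate by parts once in $n$: this produces an interior term of shape $(D_iJ)(D_jD_mD_nJ)$ plus a cutoff term carrying $D\phi$. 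Commuting $D_j$ past $D_mD_n$ in the interior yields a further $\Rm \ast DJ \ast DJ$ contribution (pointwise controlled by $\brs{\Rm}\brs{\tau}^2$, hence by $\brs{\Rm}^2$ via Lemma \ref{lem:|tau|by|Rm|}), together with a term that collapses on contraction to $(D_iJ)(D_m\lap_D J)$. A second integration by parts plus the explicit formula \eqref{eq:lapDJ} for $\lap_D J$ reduces the latter to $\brs{\brs{\lap_D J}}_{L^2,\phi}^2 \leq C \brs{\brs{\Rm}}_{L^2,\phi}^2$, plus extra cutoff boundary terms.

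The cutoff boundary terms are supported in the annulus $A_R = B_{2R}\setminus B_R$, and the typical worst-behaved one has the shape $\int \brs{DJ}\brs{DDJ}\brs{D\phi}\dVg$. Applying Hölder with exponents $(4,2,4)$ bounds this by $\brs{\brs{\tau}}_{L^4(A_R)}\brs{\brs{DDJ}}_{L^2(A_R)}\brs{\brs{D\phi}}_{L^4(A_R)}$. The cutoff assumption $\brs{D\phi}\leq C/R$ combined with the volume growth $\Vol(A_R) \leq CR^4$ from \eqref{eq:naturalbds} yields $\brs{\brs{D\phi}}_{L^4(A_R)} \leq C$ uniformly in $R$. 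Lemma \ref{lem:|tau|by|Rm|} converts $\brs{\brs{\tau}}_{L^4(A_R)}^4 \leq C\brs{\brs{\Rm}}_{L^2(A_R)}^2$, so $\brs{\brs{\tau}}_{L^4(A_R)}^2 \leq C\brs{\brs{\Rm}}_{L^2(A_R)}$. An additional Peter-Paul step splits this boundary term into $C\ge^{-1}\brs{\brs{\Rm}}_{L^2,\phi}^{3/2}$ plus $\ge \brs{\brs{DDJ}}_{L^2,\phi}^2$. Similarly the $\int \brs{DJ}\brs{\lap_D J}\brs{D\phi}$ term contributes $C\brs{\brs{\Rm}}_{L^2,\phi}^{3/2}$.

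Collecting everything, I expect an inequality of the form
\[
\brs{\brs{DDJ}}_{L^2,\phi}^2 \leq C\brs{\brs{\Rm}}_{L^2,\phi}^2 + C\brs{\brs{\Rm}}_{L^2,\phi}^{3/2} + 2\ge\brs{\brs{DDJ}}_{L^2,\phi}^2,
\]
and choosing $\ge$ small enough to absorb the last term on the left produces exactly $C(\brs{\brs{\Rm}}_{L^2,\phi}^{1/2} + 1)\brs{\brs{\Rm}}_{L^2,\phi}^{3/2}$, finishing the argument after accounting for the $\brs{\tau}^4$ term above. The hard part is bookkeeping: the Peter-Paul absorption must be performed only once every commutator and integration-by-parts contribution to $\brs{\brs{DDJ}}_{L^2,\phi}^2$ has been tallied, and the somewhat surprising exponent $3/2$ on the right-hand side arises precisely from the half-power gain $\brs{\brs{\tau}}_{L^4}^2 \sim \brs{\brs{\Rm}}_{L^2}$ provided by Lemma \ref{lem:|tau|by|Rm|} in the cutoff annulus.
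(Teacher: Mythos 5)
Your proposal is correct and follows essentially the same route as the paper: the same reduction of $\N\tau$ to $DDJ$ plus quadratic torsion terms, the same commutator splitting and integration by parts feeding into the formula \eqref{eq:lapDJ} for $\lap_D J$, the same H\"{o}lder-with-volume-growth treatment of the cutoff terms on the annulus, and the same Peter--Paul absorption producing the exponent $3/2$ via $\brs{\brs{\tau}}_{L^4}^2 \lesssim \brs{\brs{\Rm}}_{L^2}$ from Lemma \ref{lem:|tau|by|Rm|}. Your explicit remark that the $\tau^{\ast 2}$ and $(DJ)^{\ast 2}$ contributions are handled pointwise by the same lemma is a minor completion of a step the paper leaves implicit, but it does not change the argument.
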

\bibliography{sources.bib}{}
\bibliographystyle{alpha}
\end{document}